\newtheorem{theorem}{Theorem}
\newtheorem{lemma}{Lemma}
\newtheorem{definition}{Definition}
\newtheorem{assumption}{Assumption}
\newtheorem{remark}{Remark}
\begin{document}
% The file aaai.sty is the style file for AAAI Press
% proceedings, working notes, and technical reports.
%
\title{ Faster Gradient-Free Proximal Stochastic Methods for \\ Nonconvex Nonsmooth Optimization }

\author{Feihu Huang$^{1,2}$, Bin Gu$^{3}$, Zhouyuan Huo$^2$, Songcan Chen$^{1}$\thanks{Corresponding Author.}, Heng Huang$^{2,3}$\\
$^1$College of Computer Science \& Technology, Nanjing University of Aeronautics and Astronautics, Nanjing, 211106, China \\
$^2$Department of Electrical \& Computer Engineering, University of Pittsburgh, PA 15261, USA \quad
$^3$JDDGlobal.com\\
huangfeihu@nuaa.edu.cn, gubin3@jd.com,  zhouyuan.huo@pitt.edu, s.chen@nuaa.edu.cn, heng.huang@pitt.edu
}

\maketitle
\begin{abstract}
Proximal gradient method has been playing an important role to solve many machine learning tasks,
especially for the nonsmooth problems.
However, in some machine learning problems such as the bandit model and the black-box learning problem,
proximal gradient method could fail because the explicit gradients of these problems are difficult
or infeasible to obtain. The gradient-free (zeroth-order) method can address these problems because only the objective function values are required in the optimization.
Recently, the first zeroth-order proximal stochastic
algorithm was proposed to solve the nonconvex nonsmooth problems.
However, its convergence rate is $O(\frac{1}{\sqrt{T}})$ for the nonconvex problems,
which is significantly slower than the best convergence rate $O(\frac{1}{T})$ of the zeroth-order stochastic algorithm,
where $T$ is the iteration number.
To fill this gap, in the paper, we propose a class of faster zeroth-order proximal stochastic methods
with the variance reduction techniques of SVRG and SAGA,
which are denoted as ZO-ProxSVRG and ZO-ProxSAGA, respectively.
In theoretical analysis, we address the main challenge that
an unbiased estimate of the true gradient does not hold in the zeroth-order case,
which was required in previous theoretical analysis of both SVRG and SAGA.
Moreover,  we prove that both ZO-ProxSVRG and ZO-ProxSAGA algorithms have
$O(\frac{1}{T})$ convergence rates.
Finally, the experimental results
verify that our algorithms have
a faster convergence rate than the existing zeroth-order proximal stochastic algorithm.
\end{abstract}

\section{Introduction}
Proximal gradient (PG) methods \citep{Mine1981A,nesterov2004introductory,parikh2014proximal} are a class of powerful optimization tools
in artificial intelligence and machine learning.
In general, it considers the following nonsmooth optimization problem:
\begin{align}
\min_{x\in \mathbb{R}^d} f(x) + \psi(x),
\end{align}
where $f(x)$ usually is the loss function such as hinge loss and logistic loss, and $\psi(x)$ is the nonsmooth structure regularizer such as $\ell_1$-norm regularization.
In recent research, \cite{beck2009fast,nesterov2013gradient} proposed the accelerate PG methods to solve convex problems
by using the Nesterov's accelerated technique. After that, \cite{li2015accelerated} presented a class of
accelerated PG methods for nonconvex optimization.
More recently, \cite{gu2018inexact} introduced inexact PG methods for nonconvex nonsmooth optimization.
To solve the big data problems, the incremental or stochastic PG methods \citep{bertsekas2011incrementalp,xiao2014proximal}
were developed for large-scale convex optimization.
Correspondingly, \cite{ghadimi2016mini,Reddi2016Prox} proposed the stochastic PG methods for large-scale nonconvex optimization.

However, in many machine learning problems,
the explicit expressions of gradients are difficult or infeasible to obtain.
For example, in some complex graphical model inference \citep{wainwright2008graphical} and structure prediction problems \citep{Sokolov2018Sparse},
it is difficult to compute the explicit gradients of the objective functions.
Even worse, in bandit \citep{shamir2017optimal} and black-box learning \citep{chen2017zoo} problems,
only the objective function values are available (the explicit gradients cannot be calculated).
Clearly, the above PG methods will fail in dealing with these scenarios. The gradient-free (zeroth-order) optimization method \citep{Nesterov2017RandomGM} is a promising
choice to address these problems because it only uses the function values in optimization process.
Thus, the gradient-free optimization methods have been increasingly embraced for solving many machine learning problems \citep{conn2009introduction}.

\begin{table*}
  \centering
  \caption{ Comparison of representative zeroth-order stochastic algorithms for finding an $\epsilon$-approximate stationary point of nonconvex problem, i.e.,
   $\mathbb{E}\|\nabla f(x)\|^2\leq \epsilon$
    or $\mathbb{E}\|g_{\eta}(x)\|^2\leq \epsilon$. (S, NS, C and NC are the abbreviations of smooth, nonsmooth, convex and
    nonconvex, respectively. $T$ is the whole iteration number, $d$ is the dimension of data and $n$ denotes the sample size.)
    $B(\leq n)$ is a mini-batch size. }
  \label{tab:1}
\begin{tabular}{c|c|c|c|c}
  \hline
  % after \\: \hline or \cline{col1-col2} \cline{col3-col4} ...
  \textbf{Algorithm} & \textbf{Reference} & \textbf{Gradient estimator} &\textbf{Problem}  & \textbf{Convergence rate}  \\ \hline
  RSGF & \citet{Ghadimi2013StochasticFA} & GauSGE &S(NC)  &  $O(\sqrt{\frac{d}{T}})$ \\ \hline
  ZO-SVRG & \citet{liu2018zeroth} & CooSGE & S(NC)  & $O(\frac{d}{T})$ \\ \hline
  \multirow{2}*{SZVR-G}  & \multirow{2}*{\citet{Liu2018StochasticZO}} & GauSGE  & S(NC) & $O(\max(d^{\frac{2}{3}}B^{\frac{1}{3}},d^{\frac{1}{3}}B^{\frac{2}{3}})/T)$ \\ \cline{3-5}
    &  & GauSGE & NS(NC) & $O(d^{\frac{5}{\sqrt{33}}}B^{\frac{1}{\sqrt{33}}}/T^{\sqrt{\frac{3}{11}}})$ \\ \hline
  RSPGF  &\citet{ghadimi2016mini} & GauSGE & S(NC) + NS(C)  & $O(\sqrt{\frac{d}{T}})$ \\ \hline
  \multirow{2}*{ZO-ProxSVRG} & \multirow{2}*{Ours}  & CooSGE &S(NC) + NS(C)  & $O(\frac{d}{T})$  \\ \cline{3-5}
   &  & GauSGE & S(NC) + NS(C)   & $O(\frac{d}{T} + d\sigma^2)$ \\ \hline
  \multirow{2}*{ZO-ProxSAGA}  & \multirow{2}*{Ours}  & CooSGE &S(NC) + NS(C)   & $O(\frac{d}{T})$  \\ \cline{3-5}
   &  & GauSGE &S(NC) + NS(C)   & $O(\frac{d}{T}+d\sigma^2)$  \\
  \hline
\end{tabular}
\end{table*}

Although many gradient-free methods have recently been developed and studied
\citep{agarwal2010optimal,Nesterov2017RandomGM,liu2018admm}, they often
suffer from the high variances of zeroth-order gradient estimates.
In addition, these algorithms are mainly designed for smooth or convex settings,
which will be discussed in the below related works,
thus limiting their applicability in
a wide range of nonconvex nonsmooth machine learning problems such as involving the nonconvex loss functions and
nonsmooth regularization.

In this paper, thus, we propose a class of faster gradient-free proximal stochastic methods
for solving the nonconvex nonsmooth problem as follows:
\begin{align} \label{eq:1}
\min_{x \in \mathbb{R}^d} F(x) =: f(x) + \psi(x), \ f(x)=:\frac{1}{n}\sum_{i=1}^n f_i(x)
\end{align}
where each $f_i(x)$ is a \emph{nonconvex} and smooth loss function, and $\psi(x)$ is
a convex and \emph{nonsmooth} regularization term.
Until now, there are few zeroth-order stochastic methods for solving the problem \eqref{eq:1}
except a recent attempt proposed in \citep{ghadimi2016mini}. Specifically,
\cite{ghadimi2016mini} have proposed a randomized stochastic projected gradient-free method
(RSPGF), \emph{i.e.}, a zeroth-order proximal stochastic gradient method.
However, due to the large variance of zeroth-order estimated gradient generated from randomly selecting the sample and the direction of derivative, the RSPGE only has a convergence rate $O(\frac{1}{\sqrt{T}})$ ,
which is significantly slower than $O(\frac{1}{T})$, the best
convergence rate of the zeroth-order stochastic algorithm.
To accelerate the RSPGF algorithm, we use the variance reduction strategies in the first-order methods,
\emph{i.e.}, SVRG \citep{xiao2014proximal} and SAGA \citep{defazio2014saga}, to reduce the variance of estimated stochastic gradient.

Although SVRG and SAGA have shown good performances, applying these strategies to the zeroth-order method is \textbf{not a trivial
task}. The main challenge arises due to that both SVRG and SAGA rely on the assumption that a stochastic
gradient is an \textbf{unbiased} estimate of the true full gradient. However, it does not hold in
the zeroth-order algorithms. In the paper, thus, we will fill this gap between
zeroth-order proximal stochastic method and the classic variance reduction approaches (SVRG and SAGA).

\subsection{Main Contributions}
In summary, our main contributions are summarized as follows:
\begin{itemize}
\item We propose a class of faster gradient-free proximal stochastic methods (ZO-ProxSVRG and ZO-ProxSAGA),
      based on the variance reduction techniques of SVRG and SAGA. Our new algorithms only use the objective function values in the optimization process.
\item Moreover, we provide the theoretical analysis on the convergence properties of both new ZO-ProxSVRG and ZO-ProxSAGA methods.
      Table \ref{tab:1} shows the specifical convergence rates of the proposed algorithms and other related ones. In particular, our algorithms have faster convergence rate $O(\frac{1}{T})$ than
      $O(\frac{1}{\sqrt{T}})$ of the RSPGF \citep{ghadimi2016mini} (the existing stochastic PG algorithm for solving nonconvex nonsmoothing problems).
\item Extensive experimental results and theoretical analysis demonstrate the effectiveness of our algorithms.
\end{itemize}

\section{Related Works}
Gradient-free (zeroth-order) methods have been effectively used to solve many machine learning problems,
where the explicit gradient is difficult or infeasible to obtain, and have also been widely studied.
For example, \cite{Nesterov2017RandomGM} proposed  several random gradient-free methods by using Gaussian smoothing technique.
\cite{duchi2015optimal} proposed a zeroth-order mirror descent algorithm.
More recently, \cite{yu2018generic,dvurechensky2018accelerated} presented the accelerated
zeroth-order methods for the convex optimization.
To solve the nonsmooth problems, the zeroth-order online or stochastic ADMM methods \citep{liu2018admm,gao2018information} have been introduced.

The above zeroth-order methods mainly focus on the (strongly) convex problems.
In fact, there exist many nonconvex machine learning tasks, whose explicit gradients are not available,
such as the nonconvex black-box learning problems \citep{chen2017zoo,liu2018zeroth}.
Thus, several recent works have begun to study the zeroth-order stochastic methods for the nonconvex optimization.
For example, \cite{Ghadimi2013StochasticFA} proposed the randomized stochastic gradient-free (RSGF) method,
\emph{i.e.}, a zeroth-order stochastic gradient method.
To accelerate optimization, more recently, \cite{liu2018zeroth,Liu2018StochasticZO} proposed the zeroth-order stochastic
variance reduction gradient (ZO-SVRG) methods.
Moreover, to solve the large-scale machine learning problems, some asynchronous parallel stochastic
zeroth-order algorithms have been proposed in \citep{gu2016zeroth,Lian2016A,Gu2018faster}.

Although the above zeroth-order stochastic methods can effectively solve the nonconvex optimization,
there are few zeroth-order stochastic methods for the \emph{nonconvex nonsmooth}
composite optimization except the RSPGF method presented in \citep{ghadimi2016mini}.
In addition, \cite{Liu2018StochasticZO} have also studied the zeroth-order algorithm for solving the nonconvex nonsmooth
problem, which is different from problem \eqref{eq:1}.

\section{Zeroth-Order Proximal Stochastic Method Revisit}
In this section, we briefly review the zeroth-order proximal stochastic gradient (ZO-ProxSGD) method
to solve the problem  \eqref{eq:1}.
Before that, we first revisit the proximal gradient descent (ProxGD) method \citep{Mine1981A}.

ProxGD is an effective method to solve the problem \eqref{eq:1} via the following iteration:
\begin{align}
 x_{t+1} = \mbox{Prox}_{\eta \psi} \big(x_t - \eta \nabla f(x_t)\big), \ t=0,1,\cdots,
\end{align}
where $\eta>0$ is a step size, and $\mbox{Prox}_{\eta \psi}(\cdot)$ is a \emph{proximal operator} defined as:
\begin{align} \label{eq:3}
 \mbox{Prox}_{\eta \psi}(x) = \mathop{\arg\min}_{y\in \mathbb{R}^d} \big\{ \psi(y) + \frac{1}{2\eta}\|y-x\|^2 \big\}.
\end{align}

As discussed above, because ProxGD needs to compute the gradient at each iteration,
it cannot be applied to solve the problems, where the explicit gradient of function $f(x)$ is not available.
For example, in the black-box machine learning model, only function values (\emph{e.g.},
prediction results) are available \cite{chen2017zoo}. To avoid
computing explicit gradient, we use the zeroth-order gradient estimators \citep{Nesterov2017RandomGM,liu2018zeroth} to
estimate the gradient only by function values.

\begin{itemize}
\item Specifically, we use the \textbf{Gau}ssian \textbf{S}moothing \textbf{G}radient \textbf{E}stimator
(\textbf{GauSGE}) \citep{Nesterov2017RandomGM,ghadimi2016mini} to estimate the
 gradients as follows:
 \begin{align} \label{eq:4}
  \hat{\nabla} f_i(x) = \frac{f_i(x+\mu u_{i}) - f_i(x)}{\mu}u_{i},\;\;i\in [n]\,,
 \end{align}
 where $\mu$ is a smoothing parameter, and $\{u_{i}\}_{i=1}^n$ denote
 \emph{i.i.d.} random directions drawn from a zero-mean isotropic
 multivariate Gaussian distribution $\mathcal{N}(0,I)$.

\item Moreover, to obtain better estimated gradient,
  we can use the \textbf{Coo}rdinate \textbf{S}moothing \textbf{G}radient \textbf{E}stimator (\textbf{CooSGE})
 \citep{gu2016zeroth,Gu2018faster,liu2018zeroth} to estimate the gradients as
 follows:
 \begin{align} \label{eq:5}
  \hat{\nabla} f_i(x) = \sum_{j=1}^d \frac{f_i(x+\mu_j e_j)-f_i(x-\mu_j e_j)}{2\mu_j}e_j,\;\;i\in [n]\,,
 \end{align}
 where $\mu_j$ is a coordinate-wise smoothing parameter, and $e_j$ is
 a standard basis vector with $1$ at its $j$-th coordinate, and $0$ otherwise.
 Although the CooSGE need more function queries than the GauSGE, it can get better estimated gradient, and
 even can make the algorithms to obtain a faster convergence rate.
\end{itemize}

Finally, based on these estimated gradients, we give a zeroth-order proximal
gradient descent (ZO-ProxGD) method,
which performs the following iteration:
\begin{align}
 x_{t+1} = \mbox{Prox}_{\eta \psi} \big(x_t - \eta \hat{\nabla} f(x_t)\big), \ t=0,1,\cdots,
\end{align}
where $\hat{\nabla} f(x) = \frac{1}{n}\sum_{i=1}^n \hat{\nabla} f_i(x)$.

Since ZO-ProxGD needs to estimate full gradient $\hat{\nabla} f(x)=\frac{1}{n}\sum_{i=1}^n \nabla f_i(x)$,
when $n$ is large in the problem \eqref{eq:1}, its high cost per iteration is prohibitive.
As a result, \cite{ghadimi2016mini} proposed the RSPGF (\emph{i.e.}, ZO-ProxSGD)
with performing the following iteration:
\begin{align}
 x_{t+1} = \mbox{Prox}_{\eta \psi} \big(x_t - \eta \hat{\nabla} f_{\mathcal{I}_t}(x_t)\big), \ t=0,1,\cdots,
\end{align}
where $\hat{\nabla} f_{\mathcal{I}_t}(x_t) = \frac{1}{b}\sum_{i\in \mathcal{I}_t} \hat{\nabla} f_i(x)$, $\mathcal{I}_t \in \{1,2,\cdots,n\}$
and $b=|\mathcal{I}_t|$ is the mini-batch size.

\section{New Faster Zeroth-Order Proximal Stochastic Methods }
In this section, to efficiently solve the large-scale nonconvex nonsmooth problems, we propose
a class of faster zeroth-order proximal stochastic methods with the variance reduction (VR) techniques of SVRG and SAGA, respectively.

\subsection{ZO-ProxSVRG }
In the subsection, we propose the zeroth-order proximal SVRG (ZO-ProxSVRG) method by using
VR technique of SVRG in \citep{xiao2014proximal,Reddi2016Prox}.

The corresponding algorithmic framework is described in Algorithm \ref{alg:1},
where we use a mixture stochastic gradient
$\hat{v}_{t}^{s} = \hat{\nabla}
f_{\mathcal{I}_t}(x_{t}^{s})-\hat{\nabla} f_{\mathcal{I}_t}(\tilde{x}^{s}) + \hat{\nabla} f(\tilde{x}^{s})$.
Note that $\mathbb{E}_{\mathcal{I}_t}[\hat{v}_{t}^{s}] = \hat{\nabla}
f(x_{t}^{s})\neq \nabla f(x_{t}^{s})$, \emph{i.e.}, this stochastic gradient
is a \textbf{biased} estimate of the true full gradient.
Although the SVRG has shown a great promise,
it relies upon the assumption that the stochastic
gradient is an \textbf{unbiased }estimate of the true full gradient.
Thus, adapting the similar ideas of SVRG to zeroth-order optimization
is not a trivial task. To address this issue, we analyze the upper bound for the variance of
the estimated gradient $\hat{v}_{t}^{s}$, and choose the appropriate step size $\eta$ and
smoothing parameter $\mu$ to control this variance,
which will be in detail discussed in the below theorems.

Next, we derive the upper bounds
for the variance of estimated gradient $\hat{v}_{t}^{s}$ based on the CooSGE and the GauSGE, respectively.

\begin{lemma} \label{lem:1}
In Algorithm \ref{alg:1} using the CooSGE, given the mixture estimated gradient
$\hat{v}^s_t = \hat{\nabla} f_{\mathcal{I}_t}(x_{t}^{s})-\hat{\nabla} f_{\mathcal{I}_t}(\tilde{x}^s)+\hat{\nabla} f(\tilde{x}^s)$,
then the following inequality holds
\begin{align}
 \mathbb{E}\|\hat{v}^s_t-\nabla f(x^s_t)\|^2 \leq \frac{2\delta_n L^2
 d}{b} \mathbb{E}\|x^s_t-\tilde{x}^s\|^2 + \frac{L^2 d^2\mu^2}{2},
\end{align}
where $0 \leq \delta_n \leq 1$.
\end{lemma}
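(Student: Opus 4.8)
The plan is to split the estimation error $\hat v_t^s-\nabla f(x_t^s)$ into a conditionally mean-zero ``mini-batch'' part and a deterministic ``smoothing-bias'' part, and to bound each. Conditioning on $(x_t^s,\tilde x^s)$ we have $\mathbb{E}_{\mathcal{I}_t}[\hat v_t^s]=\hat{\nabla} f(x_t^s)$ (not $\nabla f(x_t^s)$), so applying $\|a+b\|^2\le 2\|a\|^2+2\|b\|^2$ gives
\[
\mathbb{E}\|\hat v_t^s-\nabla f(x_t^s)\|^2 \le 2\,\mathbb{E}\big\|\hat v_t^s-\hat{\nabla} f(x_t^s)\big\|^2 + 2\,\mathbb{E}\big\|\hat{\nabla} f(x_t^s)-\nabla f(x_t^s)\big\|^2 .
\]
It then suffices to prove the two sub-bounds $\mathbb{E}\|\hat v_t^s-\hat{\nabla} f(x_t^s)\|^2 \le \frac{\delta_n L^2 d}{b}\,\mathbb{E}\|x_t^s-\tilde x^s\|^2$ and $\|\hat{\nabla} f(x_t^s)-\nabla f(x_t^s)\|^2 \le \frac{L^2 d^2\mu^2}{4}$.

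For the first sub-bound I would mimic the SVRG variance analysis with the CooSGE estimator in place of the gradient. Setting $\xi_i:=\hat{\nabla} f_i(x_t^s)-\hat{\nabla} f_i(\tilde x^s)$, a short computation shows $\hat v_t^s-\hat{\nabla} f(x_t^s)=\frac1b\sum_{i\in\mathcal{I}_t}\xi_i-\frac1n\sum_{j=1}^n\xi_j$, i.e.\ it is the centered average of a size-$b$ sample of $\{\xi_i\}$, so $\mathbb{E}_{\mathcal{I}_t}\|\hat v_t^s-\hat{\nabla} f(x_t^s)\|^2=\frac{\delta_n}{b}\,\mathbb{E}_i\|\xi_i-\mathbb{E}_j\xi_j\|^2\le\frac{\delta_n}{b}\,\mathbb{E}_i\|\xi_i\|^2$, where $\delta_n\in[0,1]$ is the standard mini-batch sampling-correction factor (in particular $\delta_n=0$ when $b=n$, consistent with $\hat v_t^s=\hat{\nabla} f(x_t^s)$ in that case). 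The remaining ingredient is that the CooSGE map $x\mapsto\hat{\nabla} f_i(x)$ is $\sqrt d\,L$-Lipschitz, i.e.\ $\|\hat{\nabla} f_i(x)-\hat{\nabla} f_i(y)\|^2\le dL^2\|x-y\|^2$: each coordinate $x\mapsto\hat{\nabla}_j f_i(x)=\frac{f_i(x+\mu_j e_j)-f_i(x-\mu_j e_j)}{2\mu_j}$ has gradient $\frac{\nabla f_i(x+\mu_j e_j)-\nabla f_i(x-\mu_j e_j)}{2\mu_j}$ of norm at most $L$ (by $L$-smoothness of $f_i$), hence is $L$-Lipschitz, and summing the $d$ squared coordinate differences gives the claim. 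Taking $x=x_t^s$, $y=\tilde x^s$ gives $\mathbb{E}_i\|\xi_i\|^2\le dL^2\|x_t^s-\tilde x^s\|^2$ and hence the first sub-bound.

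For the second sub-bound I would use the standard smoothing-error estimate for the central difference: from $\hat{\nabla}_j f_i(x)-\nabla_j f_i(x)=\frac{1}{2\mu_j}\int_{-\mu_j}^{\mu_j}\!\big(\nabla_j f_i(x+te_j)-\nabla_j f_i(x)\big)\,dt$ and $|\nabla_j f_i(x+te_j)-\nabla_j f_i(x)|\le L|t|$ we get $|\hat{\nabla}_j f_i(x)-\nabla_j f_i(x)|\le\frac{L\mu_j}{2}$; summing over coordinates (with $\mu_j\equiv\mu$) and bounding $\|\cdot\|_2\le\|\cdot\|_1$ yields $\|\hat{\nabla} f_i(x)-\nabla f_i(x)\|\le\frac{Ld\mu}{2}$, and Jensen over $i$ preserves this bound for $\hat{\nabla} f(x)-\nabla f(x)$; squaring gives the claimed $\frac{L^2 d^2\mu^2}{4}$. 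Plugging both sub-bounds into the display above produces exactly $\frac{2\delta_n L^2 d}{b}\,\mathbb{E}\|x_t^s-\tilde x^s\|^2+\frac{L^2 d^2\mu^2}{2}$.

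I expect the only genuinely non-routine point to be the first step: because $\hat v_t^s$ is unbiased only for $\hat{\nabla} f(x_t^s)$, the SVRG-style decomposition must be centered at $\hat{\nabla} f$, and the ``Lipschitz constant'' that drives the variance contraction is that of the \emph{estimator} $\hat{\nabla} f_i$ (namely $\sqrt d\,L$), not that of $f_i$ — this is exactly where the dimension factor $d$ appears, and the residual $O(d^2\mu^2)$ term is what will later force $\mu$ to be chosen small. The two estimator facts used (the $\sqrt d\,L$-Lipschitzness and the $O(d\mu)$ bias) are elementary consequences of $L$-smoothness, and the remaining mini-batch variance bookkeeping and constants are mechanical.
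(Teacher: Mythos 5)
Your proposal is correct and follows essentially the same route as the paper's proof: the same decomposition centered at $\hat{\nabla} f(x_t^s)$ with the Cauchy--Schwarz/Young split, the same mini-batch variance bound with the $\delta_n/b$ factor, the same $dL^2\|x_t^s-\tilde{x}^s\|^2$ Lipschitz bound on the CooSGE differences, and the same $\tfrac{L^2d^2\mu^2}{4}$ smoothing-bias bound. The only difference is cosmetic: you derive the two estimator facts (Lipschitzness of $\hat{\nabla} f_i$ and the central-difference bias) directly from $L$-smoothness, whereas the paper imports them from Lemma \ref{lem:5} and the cited lemmas of \citet{liu2018zeroth}.
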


\begin{remark}
 Lemma \ref{lem:1} shows that variance of $\hat{v}^s_t$ has an upper bound.
 As the number of iterations increases, both $x_t^{s}$ and $\tilde{x}^s$ will approach the same stationary point $x^*$,
 then the variance of stochastic gradient decreases, but does not vanishes, due to using the
 zeroth-order estimated gradient.
\end{remark}

\begin{lemma} \label{lem:2}
In Algorithm \ref{alg:1} using the GauSGE, given the estimated gradient
$\hat{v}^s_t = \hat{\nabla} f_{\mathcal{I}_t}(x_{t}^{s})-\hat{\nabla} f_{\mathcal{I}_t}(\tilde{x}^s)+\hat{\nabla} f(\tilde{x}^s)$,
then the following inequality holds
\begin{align}
 & \mathbb{E}\|\hat{v}^s_t-\nabla f(x_t)\|^2 \leq (2+\frac{12\delta_n}{b})(d+6)^3 L^2\mu^2
    \nonumber \\
 & \quad + \frac{6\delta_nL^2}{b}\mathbb{E}\|x^s_t-\tilde{x}^s\|^2+ (4 + \frac{24\delta_n}{b})(2d+9)\sigma^2.
\end{align}
\end{lemma}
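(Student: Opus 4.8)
The plan is to decouple the two distinct sources of error carried by $\hat v_t^s$: the \emph{variance-reduction} error, which the SVRG correction drives to zero, and the \emph{zeroth-order estimation} error of the \textbf{GauSGE}, which does not vanish and must instead be kept small through the smoothing parameter $\mu$. Concretely, I would introduce the exact-gradient surrogate
\begin{align*}
v_t^s \;=\; \nabla f_{\mathcal I_t}(x_t^s) - \nabla f_{\mathcal I_t}(\tilde x^s) + \nabla f(\tilde x^s),
\end{align*}
which satisfies $\mathbb E_{\mathcal I_t}[v_t^s] = \nabla f(x_t^s)$, and split
\begin{align*}
\mathbb E\|\hat v_t^s - \nabla f(x_t^s)\|^2 \;\le\; 2\,\mathbb E\|\hat v_t^s - v_t^s\|^2 + 2\,\mathbb E\|v_t^s - \nabla f(x_t^s)\|^2 .
\end{align*}
The second term is exactly the object controlled in the classical proximal-SVRG analysis: bounding the mini-batch variance by the corresponding second moment and then invoking the $L$-smoothness of each $f_i$ gives a \emph{dimension-free} estimate of the form $\tfrac{c\,\delta_n L^2}{b}\,\mathbb E\|x_t^s-\tilde x^s\|^2$, where $\delta_n\in[0,1]$ is the sampling-without-replacement correction (as in Lemma~\ref{lem:1}); after the quadratic-norm splittings this becomes the $\tfrac{6\delta_n L^2}{b}\mathbb E\|x_t^s-\tilde x^s\|^2$ term.

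For the first term I would expand, using that $v_t^s$ and $\hat v_t^s$ share the same mini-batch $\mathcal I_t$,
\begin{align*}
\hat v_t^s - v_t^s = \big(\hat\nabla f_{\mathcal I_t}(x_t^s)-\nabla f_{\mathcal I_t}(x_t^s)\big) - \big(\hat\nabla f_{\mathcal I_t}(\tilde x^s)-\nabla f_{\mathcal I_t}(\tilde x^s)\big) + \big(\hat\nabla f(\tilde x^s)-\nabla f(\tilde x^s)\big),
\end{align*}
so it is a sum of three \textbf{GauSGE} estimation errors; applying $\|a-b+c\|^2\le 3(\|a\|^2+\|b\|^2+\|c\|^2)$ reduces the task to bounding $\mathbb E\|\hat\nabla f_i(y)-\nabla f_i(y)\|^2$ for $y\in\{x_t^s,\tilde x^s\}$ (averaged over the mini-batch, resp.\ over all $n$ samples). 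Each such error I would further decompose as $\big(\hat\nabla f_i(y)-\nabla f_{i,\mu}(y)\big)+\big(\nabla f_{i,\mu}(y)-\nabla f_i(y)\big)$, with $f_{i,\mu}$ the Gaussian-smoothed $f_i$. The first piece is mean-zero in the random direction, so its second moment is at most $\mathbb E_u\|\hat\nabla f_i(y)\|^2\le 2(d+4)\|\nabla f_i(y)\|^2+\tfrac{\mu^2}{2}L^2(d+6)^3$ by the standard smoothing-estimator moment inequality \citep{Nesterov2017RandomGM,ghadimi2016mini}; the second piece is the smoothing bias, $\|\nabla f_{i,\mu}(y)-\nabla f_i(y)\|\le\tfrac{\mu}{2}L(d+3)^{3/2}$. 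Bounding $\|\nabla f_i(y)\|^2$ by $\sigma^2$ through the paper's assumption on the component functions, using the independence of the random directions across $i$ (so that the full-batch term at $\tilde x^s$ contributes only an $O(1/n)$ amount), and collecting constants through the quadratic-norm splittings and the mini-batch average produces the $(2+\tfrac{12\delta_n}{b})(d+6)^3L^2\mu^2$ and $(4+\tfrac{24\delta_n}{b})(2d+9)\sigma^2$ terms. Adding the two bounds and tidying the numerical factors gives the claim.

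The main obstacle is the one highlighted in the introduction: $\hat v_t^s$ is a \emph{biased} estimate of $\nabla f(x_t^s)$, so the textbook proximal-SVRG variance identity cannot be applied to $\hat v_t^s$ directly. The device that makes the argument go through is the exact-gradient surrogate $v_t^s$, which cleanly separates the (dimension-free, vanishing) variance-reduction part from the (dimension-dependent, non-vanishing) \textbf{GauSGE} error; in particular, the coefficient of $\mathbb E\|x_t^s-\tilde x^s\|^2$ stays dimension-free only because that term is charged to $v_t^s$ rather than to the \textbf{GauSGE} moment bound, whereas the irreducibly high variance of the Gaussian estimator is exactly what forces the residual $O(d^3)L^2\mu^2$ and $O(d)\sigma^2$ terms — the $\mu^2$ part later made negligible by taking $\mu$ small, the $\sigma^2$ part surviving as the $O(d\sigma^2)$ contribution to the convergence rate recorded in Table~\ref{tab:1}.
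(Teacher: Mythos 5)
Your argument is sound and arrives at a bound of the right form, but it takes a genuinely different route from the paper's, and the difference shows up in the constants. The paper does not introduce the exact-gradient surrogate $v^s_t$; instead it centers the zeroth-order mini-batch difference at its own conditional mean, writing $\hat{v}^s_t-\nabla f(x^s_t)$ as the sum of the mean-zero quantity $\hat{\nabla} f_{\mathcal{I}_t}(x^s_t)-\hat{\nabla} f_{\mathcal{I}_t}(\tilde{x}^s)-\big(\hat{\nabla} f(x^s_t)-\hat{\nabla} f(\tilde{x}^s)\big)$ and the residual full-gradient estimation error $\hat{\nabla} f(x^s_t)-\nabla f(x^s_t)$. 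The sampling lemma then places the $\delta_n/b$ factor in front of $\frac{1}{n}\sum_{i}\mathbb{E}\|\hat{\nabla} f_i(x^s_t)-\hat{\nabla} f_i(\tilde{x}^s)\|^2$, and only at that point is each summand split into two GauSGE errors plus a true-gradient difference, giving $12(2d+9)\sigma^2+3L^2\|x^s_t-\tilde{x}^s\|^2+6L^2\mu^2(d+6)^3$ via Lemma \ref{lem:6} and Assumption 2, while the residual term pays a $b$-independent $4(2d+9)\sigma^2+2L^2\mu^2(d+6)^3$; this is exactly where the coefficients $(4+\frac{24\delta_n}{b})$, $(2+\frac{12\delta_n}{b})$ and $\frac{6\delta_n}{b}$ come from. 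Your decomposition buys a cleaner conceptual separation and in fact a better coefficient $\frac{2\delta_n L^2}{b}$ on $\mathbb{E}\|x^s_t-\tilde{x}^s\|^2$, but the price is that all three GauSGE errors in $\hat{v}^s_t-v^s_t$ are paid at full strength: the step $\|a-b+c\|^2\le 3(\|a\|^2+\|b\|^2+\|c\|^2)$ followed by the factor of $2$ yields roughly $12(2d+9)\sigma^2+6(d+6)^3L^2\mu^2$, which exceeds the lemma's $(4+\frac{24\delta_n}{b})(2d+9)\sigma^2$ once $b$ is moderately large; the independence refinement you sketch recovers a $1/b$ on the variance part of the mini-batch GauSGE errors but not on their bias part, and in any case lands on different numbers. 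So your proof establishes a bound of the identical form $\frac{O(L^2)}{b}\mathbb{E}\|x^s_t-\tilde{x}^s\|^2+O(d^3L^2\mu^2)+O(d\sigma^2)$ --- which is all that Theorem \ref{th:2} actually uses --- but not the literal constants of Lemma \ref{lem:2}; to match them you would need the paper's ordering, i.e., apply the sampling-variance bound first and perform the three-way GauSGE split only inside it.
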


\begin{remark}
 Lemma \ref{lem:2} shows that variance of $\hat{v}^s_t$ has an upper bound.
 As the number of iterations increases, both $x_t^{s}$ and $\tilde{x}^s$ will approach the same stationary point $x^*$,
 then the variance of stochastic gradient decreases.
\end{remark}

\begin{algorithm}[htb]
   \caption{ ZO-ProxSVRG for Nonconvex Optimization  }
   \label{alg:1}
\begin{algorithmic}[1]
   \STATE {\bfseries Input:} mini-batch size $b$, $S$, $m$ and step size $\eta>0$;
   \STATE {\bfseries Initialize:} $x_0^1=\tilde{x}^1\in \mathbb{R}^d$;
   \FOR {$s=1,2,\cdots,S$}
   \STATE{} $\hat{\nabla }f(\tilde{x}^{s})=\frac{1}{n}\sum_{i=1}^n\hat{\nabla} f_i(\tilde{x}^{s})$;
   \FOR {$t=0,1,\cdots,m-1$}
   \STATE{} Uniformly randomly pick a mini-batch $\mathcal{I}_t \subseteq \{1,2,\cdots,n\}$ such that $|\mathcal{I}_t|=b$;
   \STATE{} Using \eqref{eq:4} or \eqref{eq:5} to estimate mixture stochastic gradient $\hat{v}_{t}^{s} = \hat{\nabla} f_{\mathcal{I}_t}(x_{t}^{s})
            -\hat{\nabla} f_{\mathcal{I}_t}(\tilde{x}^{s}) + \hat{\nabla} f(\tilde{x}^{s})$;
   \STATE{} $x^{s}_{t+1} = \mbox{Prox}_{\eta \psi} (x_t^{s} - \eta \hat{v}_{t}^{s})$;
   \ENDFOR
   \STATE{} $\tilde{x}^{s+1}= x_m^{s}$ and $x_0^{s+1}=x_{m}^{s}$;
   \ENDFOR
   \STATE {\bfseries Output:} Iterate $x$ chosen uniformly random from $\{(x_{t}^s)_{t=1}^{m}\}_{s=1}^S$.
\end{algorithmic}
\end{algorithm}

\subsection{ZO-ProxSAGA }
In the subsection, we propose the zeroth-order proximal SAGA (ZO-ProxSAGA) method via using
VR technique of SAGA in \citep{defazio2014saga,Reddi2016Prox}.

The corresponding algorithmic description is given in Algorithm \ref{alg:2},
where we use a mixture stochatic gradient $\hat{v}_{t} = \frac{1}{b} \sum_{i_t\in \mathcal{I}_t}
\big(\hat{\nabla} f_{i_t}(x_{t})-\nabla f_{i_t}(z^t_{i_t}) \big) + \hat{\phi}_t$.
Similarly, $\mathbb{E}_{\mathcal{I}_t}[\hat{v}_{t}] = \hat{\nabla} f(x_{t}^{s})\neq \nabla f(x_{t}^{s})$, \emph{i.e.},
this stochastic gradient is a \textbf{biased} estimate of the true full gradient.
Note that in Algorithm \ref{alg:2}, due to $\sum_{i_t\in \mathcal{I}_t}\hat{\nabla} f_{i_t}(z^{t+1}_{i_t})
=\sum_{i_t\in \mathcal{I}_t} \hat{\nabla} f_{i_t}(x_{t}) $,
the step 8 can use directly the term $\sum_{i_t\in \mathcal{I}_t} \big(\hat{\nabla} f_{i_t}(x_{t})-\hat{\nabla} f_{i_t}(z^t_{i_t}) \big)$,
which is computed in the step 5, to avoid unnecessary calculations.
Next, we give the upper bounds for the variance of stochastic gradient $\hat{v}_{t}$
based on the CooSGE and the GauSGE, respectively.

\begin{lemma} \label{lem:3}
In Algorithm \ref{alg:2} using the CooSGE, given the estimated gradient
$\hat{v}_t = \frac{1}{b} \sum_{i_t\in \mathcal{I}_t} \big(\hat{\nabla} f_{i_t}(x_{t})-\hat{\nabla} f_{i_t}(z^t_{i_t}) \big) + \hat{\phi}_t$
with $\hat{\phi}_t = \frac{1}{n} \sum_{i=1}^n \hat{\nabla} f_i(z^t_i)$,
then the following inequality holds
\begin{align}
 \mathbb{E}\|\hat{v}_t-\nabla f(x_t)\|^2 \leq \frac{2 L^2d}{nb} \sum_{i=1}^n \mathbb{E} \|x_t-z^t_i\|^2_2 + \frac{L^2 d^2\mu^2}{2}.
\end{align}
\end{lemma}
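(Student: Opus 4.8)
The proof runs on the same template as Lemma~\ref{lem:1}, the only genuinely new ingredient being the SAGA control variate $\hat{\phi}_t$. Let $\mathcal{F}_t$ denote the $\sigma$-algebra generated by all randomness up to and including the iterate $x_t$ and the table $\{z_i^t\}_{i=1}^n$, but before the mini-batch $\mathcal{I}_t$ is drawn. A direct computation shows that $\mathbb{E}[\hat{v}_t\mid\mathcal{F}_t]=\hat{\nabla}f(x_t)$ (the full zeroth-order gradient): conditioned on $\mathcal{F}_t$, $\mathbb{E}_{\mathcal{I}_t}\big[\tfrac{1}{b}\sum_{i_t\in\mathcal{I}_t}(\hat{\nabla}f_{i_t}(x_t)-\hat{\nabla}f_{i_t}(z^t_{i_t}))\big]=\hat{\nabla}f(x_t)-\hat{\phi}_t$, and adding $\hat{\phi}_t$ gives $\hat{\nabla}f(x_t)$. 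The plan is then to split the error into a \emph{conditional-variance} part and a \emph{deterministic-bias} part,
\begin{align}
\mathbb{E}\|\hat{v}_t-\nabla f(x_t)\|^2=\mathbb{E}\|\hat{v}_t-\hat{\nabla}f(x_t)\|^2+\mathbb{E}\|\hat{\nabla}f(x_t)-\nabla f(x_t)\|^2,
\end{align}
which is valid because, conditioned on $\mathcal{F}_t$, the second summand is deterministic while $\hat{v}_t-\hat{\nabla}f(x_t)$ has zero conditional mean, so the cross term vanishes after $\mathbb{E}[\cdot\mid\mathcal{F}_t]$ and then total expectation.

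For the bias term I would invoke the standard CooSGE approximation estimate: by Taylor's theorem together with the $L$-Lipschitz continuity of each $\nabla f_i$, every coordinate of $\hat{\nabla}f_i(x)-\nabla f_i(x)$ is $O(L\mu)$ in magnitude, so $\|\hat{\nabla}f_i(x)-\nabla f_i(x)\|^2=O(L^2d\mu^2)$ uniformly in $x$; averaging over $i$ and applying Jensen controls $\mathbb{E}\|\hat{\nabla}f(x_t)-\nabla f(x_t)\|^2$ by a term of order $L^2d^2\mu^2$, which will be absorbed into the final $\tfrac{L^2d^2\mu^2}{2}$.

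For the variance term the key observation is the SAGA cancellation: setting $\zeta_i:=\hat{\nabla}f_i(x_t)-\hat{\nabla}f_i(z_i^t)$, one has $\hat{v}_t-\hat{\nabla}f(x_t)=\tfrac{1}{b}\sum_{i_t\in\mathcal{I}_t}\zeta_{i_t}-\tfrac{1}{n}\sum_{i=1}^n\zeta_i$, so the table term $\hat{\phi}_t$ drops out entirely and what remains is the sampling fluctuation of $\zeta$ around its empirical mean. I would bound this by the mini-batch variance inequality $\mathbb{E}\big[\|\tfrac{1}{b}\sum_{i_t\in\mathcal{I}_t}\zeta_{i_t}-\tfrac{1}{n}\sum_i\zeta_i\|^2\mid\mathcal{F}_t\big]\le\tfrac{1}{nb}\sum_{i=1}^n\|\zeta_i\|^2$ (the per-sample $\mathbb{E}\|X-\mathbb{E}X\|^2\le\mathbb{E}\|X\|^2$ bound), and then estimate each $\|\zeta_i\|^2$ by passing through the true gradient, $\|\zeta_i\|^2\le 2\|\nabla f_i(x_t)-\nabla f_i(z_i^t)\|^2+2\|(\hat{\nabla}f_i-\nabla f_i)(x_t)-(\hat{\nabla}f_i-\nabla f_i)(z_i^t)\|^2\le 2L^2\|x_t-z_i^t\|^2+O(L^2d\mu^2)$, using $L$-smoothness for the first piece and the CooSGE estimate again for the second. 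Taking total expectation yields $\mathbb{E}\|\hat{v}_t-\hat{\nabla}f(x_t)\|^2\le\tfrac{2L^2d}{nb}\sum_{i=1}^n\mathbb{E}\|x_t-z_i^t\|^2+O(L^2d\mu^2)$; collecting the residual $\mu$-terms with the bias term and re-tracking constants gives the claimed $\tfrac{L^2d^2\mu^2}{2}$.

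The part requiring the most care is not any single inequality but the bookkeeping. The stale iterates $\{z_i^t\}$ are themselves random, so every conditional statement must be taken with respect to $\mathcal{F}_t$ and only then passed through total expectation; and the chain of triangle-inequality splits (the factors of $2$, the per-coordinate $O(L\mu)$ error, the summation over $d$ coordinates, the averaging over $n$ samples and the mini-batch of size $b$) must be combined precisely to land exactly on the constants $\tfrac{2L^2d}{nb}$ and $\tfrac{L^2d^2\mu^2}{2}$ rather than something weaker. A secondary but conceptually important subtlety is the opening step itself — establishing $\mathbb{E}[\hat{v}_t\mid\mathcal{F}_t]=\hat{\nabla}f(x_t)$ and hence the orthogonal decomposition — because, exactly as the paper stresses, $\hat{v}_t$ is \emph{not} unbiased for $\nabla f(x_t)$, only for its zeroth-order surrogate, so one cannot import the classical SAGA variance identity verbatim.
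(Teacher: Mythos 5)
Your proposal is correct in its overall architecture and matches the paper's skeleton --- both split the error into the mini-batch fluctuation of $\zeta_i=\hat{\nabla}f_i(x_t)-\hat{\nabla}f_i(z_i^t)$ around its empirical mean plus the deterministic CooSGE bias $\hat{\nabla}f(x_t)-\nabla f(x_t)$, and both control the fluctuation by the per-sample bound $\frac{1}{nb}\sum_i\mathbb{E}\|\zeta_i\|^2$. You diverge in two sub-steps, one to your advantage and one to your cost. First, you use the exact orthogonal decomposition (the cross term vanishes after conditioning on $\mathcal{F}_t$, which is valid here because the CooSGE estimator is deterministic given the iterate), whereas the paper uses the cruder $\|a+b\|^2\le 2\|a\|^2+2\|b\|^2$; this is why the paper's bias contribution is $2\cdot\frac{L^2d^2\mu^2}{4}=\frac{L^2d^2\mu^2}{2}$ while yours is only $\frac{L^2d^2\mu^2}{4}$. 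Second, and more consequentially, you bound $\|\zeta_i\|^2$ by detouring through the true gradients, $\|\zeta_i\|^2\le 2L^2\|x_t-z_i^t\|^2+O(L^2d^2\mu^2)$, whereas the paper uses the representation $\hat{\nabla}f_i(x)=\sum_j\partial_j f_{i,\mu_j}(x)\,e_j$ together with the $L$-smoothness of each smoothed function $f_{i,\mu_j}$ to get $\|\hat{\nabla}f_i(x)-\hat{\nabla}f_i(y)\|^2\le L^2d\|x-y\|^2$ with \emph{no} $\mu$-residue at all. Your route buys a factor of $d$ in the leading term (you get $\frac{2L^2}{nb}$ instead of $\frac{2L^2d}{nb}$), but it leaks an extra $\mu^2$-term of order $\frac{L^2d^2\mu^2}{b}$ out of the variance part, so your final constant is $(\frac{1}{4}+\frac{2}{b})L^2d^2\mu^2$, which exceeds the stated $\frac{L^2d^2\mu^2}{2}$ whenever $b\le 7$. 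This is the one place where your closing remark that ``re-tracking constants gives the claimed bound'' does not quite go through as written; to land exactly on the lemma's constants for all $b$ you would need to bound $\|\zeta_i\|^2$ the paper's way, via the smoothed functions. The discrepancy is quantitatively harmless downstream (since $\mu=O(1/\sqrt{dT})$ the $\mu^2$-term is lower order anyway), but as a proof of the lemma as literally stated it is a small gap.
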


\begin{remark}
 Lemma \ref{lem:3} shows that variance of $\hat{v}_t$ has an upper bound.
 As the number of iterations increases, both $x_t$ and $\{z^t_i\}_{i=1}^n$ will approach the same stationary point,
 then the variance of stochastic gradient decreases.
\end{remark}

\begin{lemma} \label{lem:4}
In Algorithm \ref{alg:2} using GauSGE, given the estimated gradient
$\hat{v}_t = \frac{1}{b} \sum_{i_t\in \mathcal{I}_t} \big(\hat{\nabla} f_{i_t}(x_{t})-\hat{\nabla} f_{i_t}(z^t_{i_t}) \big) + \hat{\phi}_t$
with $\hat{\phi}_t = \frac{1}{n} \sum_{i=1}^n \hat{\nabla} f_i(z^t_i)$,
then the following inequality holds
\begin{align}
 & \mathbb{E}\|\hat{v}_t-\nabla f(x_t)\|^2 \leq (2+\frac{12}{b})(d+6)^3 L^2\mu^2 \nonumber \\
 & \quad +\frac{6L^2}{nb}\sum_{i=1}^n\mathbb{E}\|x_t-z^t_i\|^2+ (4+\frac{24}{b})(2d+9)\sigma^2.
\end{align}
\end{lemma}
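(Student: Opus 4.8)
The plan is to follow the template of Lemma~\ref{lem:2}, with the SVRG anchor $\tilde x^s$ replaced by the SAGA table $\{z^t_i\}_{i=1}^n$, and to keep careful track of the three error sources hidden in $\hat v_t$: the Gaussian smoothing bias, the randomness of the probing directions $\{u_i\}$, and the mini-batch sampling of $\mathcal{I}_t$. First I would collect the standard \textbf{GauSGE} facts for each $L$-smooth $f_i$ (as in \citep{Nesterov2017RandomGM,ghadimi2016mini,liu2018zeroth}): the estimator in \eqref{eq:4} is conditionally unbiased for the Gaussian-smoothed gradient, $\mathbb{E}_{u_i}[\hat\nabla f_i(x)] = \nabla f_{i,\mu}(x)$, where $f_{i,\mu}$ is again $L$-smooth; the smoothing bias obeys $\|\nabla f_{i,\mu}(x)-\nabla f_i(x)\|^2 \le \tfrac{\mu^2 L^2}{4}(d+3)^3$; and the second moment obeys $\mathbb{E}_{u_i}\|\hat\nabla f_i(x)\|^2 \le 2(d+4)\|\nabla f_i(x)\|^2 + \tfrac{\mu^2 L^2}{2}(d+6)^3$. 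Together with the bounded-variance assumption on the component gradients used throughout the paper, averaging these over $i$ yields $\mathbb{E}\|\hat\nabla f(x_t)-\nabla f(x_t)\|^2$ of order $(d+6)^3 L^2\mu^2 + (2d+9)\sigma^2$, which will become the $2(d+6)^3L^2\mu^2 + 4(2d+9)\sigma^2$ part of the claim after a factor-$2$ Young step.

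Next I would introduce the \emph{exact} SAGA estimator $v_t := \tfrac1b\sum_{i_t\in\mathcal{I}_t}\big(\nabla f_{i_t}(x_t)-\nabla f_{i_t}(z^t_{i_t})\big) + \tfrac1n\sum_{i=1}^n\nabla f_i(z^t_i)$ as an intermediate quantity and split $\hat v_t - \nabla f(x_t) = (\hat v_t - v_t) + (v_t - \nabla f(x_t))$, applying $\|a+b\|^2 \le 2\|a\|^2 + 2\|b\|^2$ (the slack being absorbed into the constants $6,12,24$). The term $\mathbb{E}\|v_t-\nabla f(x_t)\|^2$ is the classical mini-batch SAGA variance: using $\mathbb{E}\|X-\mathbb{E}X\|^2\le\mathbb{E}\|X\|^2$ and $L$-smoothness of each $f_i$, it is controlled by $\tfrac{L^2}{nb}\sum_{i=1}^n\mathbb{E}\|x_t-z^t_i\|^2$, which is the source of the $\tfrac{6L^2}{nb}\sum_i\mathbb{E}\|x_t-z^t_i\|^2$ term; note this contribution carries no factor of $d$ and vanishes as the iterates and table entries coalesce. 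The term $\hat v_t-v_t$ collects only zeroth-order estimation errors, $\tfrac1b\sum_{i_t}\big[(\hat\nabla f_{i_t}(x_t)-\nabla f_{i_t}(x_t)) - (\hat\nabla f_{i_t}(z^t_{i_t})-\nabla f_{i_t}(z^t_{i_t}))\big] + \tfrac1n\sum_i(\hat\nabla f_i(z^t_i)-\nabla f_i(z^t_i))$; expanding the square, using independence of the $u_i$ across $i$ and from $\mathcal{I}_t$, and invoking the Step~1 bounds at the points $x_t$ and $z^t_i$, produces the residual $\tfrac{12}{b}(d+6)^3L^2\mu^2$ and $\tfrac{24}{b}(2d+9)\sigma^2$ contributions, the $1/b$ factors arising because these errors are averaged over the mini-batch.

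Finally I would add up the pieces, choosing the constants in the Young-type inequalities so the coefficients match the stated bound exactly. The main obstacle is exactly the one flagged before the lemma: the GauSGE is \emph{biased}, so $\mathbb{E}_{\mathcal{I}_t}[\hat v_t] = \hat\nabla f(x_t)\neq\nabla f(x_t)$ and the usual SAGA telescoping cannot be invoked directly. The real work lies in cleanly separating the controllable bias (which is $O(\mu)$, or $O(\sigma)$ once the direction noise is folded in) from the reducible variance, and in confirming --- via the exact-SAGA intermediate $v_t$ rather than a naive finite-difference bound, which for the Gaussian estimator would cost an extra factor of $d$ --- that the part of the error attributable to the table produces a term proportional to $\sum_i\|x_t-z^t_i\|^2$, not to $\sum_i\|\nabla f_i\|^2$. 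Unlike the CooSGE version in Lemma~\ref{lem:3}, the random directions here inject an irreducible $O(d\sigma^2)$ floor, consistent with the convergence rate reported in Table~\ref{tab:1}; matching the precise numerical constants $(d+6)^3$, $(2d+9)$, and the $1/b$ factors is then routine bookkeeping.
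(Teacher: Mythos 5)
Your overall architecture is sound and genuinely different from the paper's in one respect: you insert the \emph{exact} first-order SAGA estimator $v_t$ as the intermediate quantity, whereas the paper adds and subtracts $\hat{\nabla} f(x_t)$ (the full zeroth-order gradient at $x_t$) and then treats the mini-batch sum as a deviation from its $\mathcal{I}_t$-conditional mean $\hat{\nabla} f(x_t)-\hat{\phi}_t$. Your first piece, $\mathbb{E}\|v_t-\nabla f(x_t)\|^2\le \frac{L^2}{nb}\sum_i\mathbb{E}\|x_t-z^t_i\|^2$, is clean and in fact sharper than what the paper obtains (its coefficient $\frac{6L^2}{nb}$ arises from a three-way Jensen split of $\hat{\nabla} f_i(x_t)-\hat{\nabla} f_i(z^t_i)$ into two zeroth-order errors plus a true-gradient difference, which is also exactly where Assumption 2 enters). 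So the $\sum_i\|x_t-z^t_i\|^2$ and the gross $O(d\sigma^2)+O((d+6)^3L^2\mu^2)$ structure would both come out of your plan.

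The concrete gap is in your treatment of $\hat{v}_t-v_t$ and the provenance of the $1/b$ factors. In the paper, the $\frac{24}{b}(2d+9)\sigma^2$ and $\frac{12}{b}(d+6)^3L^2\mu^2$ terms come from the \emph{mini-batch sampling variance over $\mathcal{I}_t$} of the centered sum $\frac{1}{b}\sum_{i_t}(\hat{\nabla} f_{i_t}(x_t)-\hat{\nabla} f_{i_t}(z^t_{i_t}))-(\hat{\nabla} f(x_t)-\hat{\phi}_t)$, via the bound $\frac{1}{bn}\sum_i\mathbb{E}\|\hat{\nabla} f_i(x_t)-\hat{\nabla} f_i(z^t_i)\|^2$ — not from independence of the $u_i$ across the batch. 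Your quantity $\hat{v}_t-v_t$ has $\mathbb{E}_{\mathcal{I}_t}[\hat{v}_t-v_t]=\hat{\nabla} f(x_t)-\nabla f(x_t)$, which is neither small nor $O(1/b)$: it is precisely what generates the non-decaying $4(2d+9)\sigma^2+2(d+6)^3L^2\mu^2$ part of the claim. If instead you bound $\mathbb{E}\|\hat{v}_t-v_t\|^2$ termwise by Jensen/Young as written, every zeroth-order error contributes at full strength and you land at something like $36(2d+9)\sigma^2+18(d+6)^3L^2\mu^2$ with no $1/b$ decay, which does not imply the stated $(4+\frac{24}{b})(2d+9)\sigma^2+(2+\frac{12}{b})(d+6)^3L^2\mu^2$ once $b\ge 2$; and the alternative of exploiting independence of the $u_i$ fails to kill the smoothing bias, which does not average away over the batch. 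The fix is to re-center the mini-batch sum at its $\mathcal{I}_t$-conditional mean before invoking any $1/b$ variance bound — but once you do that, the intermediate $v_t$ cancels out and your argument collapses into the paper's decomposition. So the route is salvageable but the step ``the $1/b$ factors arise because these errors are averaged over the mini-batch'' is not routine bookkeeping; it is the one place where the biasedness of the GauSGE actually bites.
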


\begin{remark}
 Lemma \ref{lem:4} shows that variance of $\hat{v}_t$ has an upper bound.
 As the number of iterations increases, both $x_t$ and $\{z^t_i\}_{i=1}^n$ will approach the same stationary point $x^*$,
 then the variance of stochastic gradient decreases.
\end{remark}
\begin{algorithm}[htb]
   \caption{ ZO-ProxSAGA for Nonconvex Optimization }
   \label{alg:2}
\begin{algorithmic}[1]
   \STATE {\bfseries Input:} mini-batch size $b$, $T$ and step size $\eta>0$;
   \STATE {\bfseries Initialize:} $x_0 \in \mathbb{R}^d$, and $z_i^0=x_0$ for $i\in \{1,2,\cdots,n\}$, $\hat{\phi}_0=\frac{1}{n}\sum_{i=1}^n \hat{\nabla} f_i(z^0_i)$;
   \FOR {$t=0,1,\cdots,T-1$}
   \STATE{} Uniformly randomly pick a mini-batch $\mathcal{I}_t \subseteq \{1,2,\cdots,n\}$ (with replacement) such that $|\mathcal{I}_t|=b$;
   \STATE{} Using \eqref{eq:4} or \eqref{eq:5} to estimate mixture stochastic gradient $\hat{v}_{t} = \frac{1}{b} \sum_{i_t\in \mathcal{I}_t} \big(\hat{\nabla} f_{i_t}(x_{t})
            -\hat{\nabla} f_{i_t}(z^t_{i_t}) \big) + \hat{\phi}_t$;
   \STATE{} $x_{t+1} = \mbox{Prox}_{\eta \psi} (x_t - \eta \hat{v}_{t})$;
   \STATE{} $z^{t+1}_{i_t}= x_{t}$ for $i \in \mathcal{I}_t$
             and $z^{t+1}_i=z^t_i$ for $i \notin \mathcal{I}_t$;
   \STATE{} $\hat{\phi}_{t+1}=\hat{\phi}_t-\frac{1}{n}\sum_{i_t\in \mathcal{I}_t}\big(\hat{\nabla} f_{i_t}(z^t_{i_t})-\hat{\nabla} f_{i_t}(z^{t+1}_{i_t})\big)$;
   \ENDFOR
   \STATE {\bfseries Output:} Iterate $x$ chosen uniformly random from $\{x_{t}\}_{t=1}^{T}$.
\end{algorithmic}
\end{algorithm}

\section{Convergence Analysis}
In this section, we conduct the convergence analysis of both ZO-ProxSVRG and ZO-ProxSAGA.
First, we give some mild
assumptions regarding problem \eqref{eq:1} as follows:

\begin{assumption}
For $\forall i \in \{1,2,\cdots,n\}$, gradient of the function $f_i$ is Lipschitz continuous with a Lipschitz constant $L>0$,
such that
\begin{align}
\|\nabla f_i(x)-\nabla f_i(y)\| \leq L \|x - y\|, \ \forall x,y \in \mathbb{R}^d, \nonumber
\end{align}
which implies
\begin{align}
f_i(x) \leq f_i(y) + \nabla f_i(y)^T(x-y) + \frac{L}{2}\|x-y\|^2.  \nonumber
\end{align}
\end{assumption}

\begin{assumption}
 The gradient is bounded as $\|\nabla f_i(x)\|^2 \leq \sigma^2$ for all $i = 1,2,\cdots,n$.
\end{assumption}

The first assumption is standard for the convergence analysis of
the zeroth-order algorithms \citep{ghadimi2016mini,Nesterov2017RandomGM,liu2018zeroth}.
The second assumption gives the bounded gradient used
in \citep{Nesterov2017RandomGM,liu2018admm}, which is relatively stricter
than the bounded variance of gradient in \citep{Lian2016A,liu2018zeroth,Liu2018StochasticZO},
due to that we need to analyze more complex problem \eqref{eq:1} including a non-smooth part. Next, we
introduce the standard \emph{gradient mapping} \citep{parikh2014proximal} used in
the convergence analysis as follows:
\begin{align}
g_{\eta}(x) = \frac{1}{\eta}\big(x - \mbox{Prox}_{\eta \psi}(x-\eta \nabla f(x))\big).
\end{align}
For the nonconvex problems, if $g_{\eta}(x)=0$, the point $x$ is a
critical point \citep{parikh2014proximal}. Thus, we can use the
following definition as the convergence metric.
\begin{definition}
 \citep{Reddi2016Prox} A solution $x$ is called $\epsilon$-accurate, if $\mathbb{E}\|g_{\eta}(x)\|^2\leq \epsilon$ for some $\eta>0$.
\end{definition}

\subsection{Convergence Analysis of ZO-ProxSVRG }
In the subsection, we show the convergence analysis of the ZO-ProxSVRG
with the CooSGE (\textbf{ZO-ProxSVRG-CooSGE})
and the GauSGE (\textbf{ZO-ProxSVRG-GauSGE}), respectively.

\begin{theorem} \label{th:1}
 Assume the sequence $\{(x^s_t)_{t=1}^m\}_{s=1}^S$ generated from Algorithm \ref{alg:1} using the \textbf{CooSGE},
 and define a sequence $\{c_t\}_{t=1}^m$ as follows: for $s=1,2,\cdots,S$
  \begin{equation}
  c_t = \left\{
  \begin{aligned}
   & \frac{\delta_n L^2d\eta}{b} + c_{t+1}(1+\beta), \ 0 \leq t \leq m-1; \\
   & 0, \ t = m
 \end{aligned}
  \right.\end{equation}
 where $\beta>0$. Let $T=mS$, $\eta = \frac{\rho}{dL} \ (0<\rho<\frac{1}{2})$ and $b$ satisfies the following inequality:
 \begin{align}
  \frac{8\rho^2 m^2}{b} +\rho\leq 1,
 \end{align}
 then we have
 \begin{align}
  \mathbb{E} \|g_{\eta}(x^s_t)\|^2 \leq \frac{\mathbb{E} [F(x^1_0)-F(x_*)]}{T\gamma} + \frac{ L^2 d^2\mu^2\eta}{4\gamma},
 \end{align}
 where $\gamma=\frac{\eta}{2}-L\eta^2$ and $x^*$ is an optimal solution of the problem \eqref{eq:1}.
 Further let $b=[n^{\frac{2}{3}}]$, $m=[n^{\frac{1}{3}}]$, $\rho=\frac{1}{4}$ and $\mu=O(\frac{1}{\sqrt{dT}})$,
 we have
 \begin{align}
 \mathbb{E} \|g_{\eta}(x^s_t)\|^2 \leq \frac{ 16d L\mathbb{E} [F(x^1_0) - F(x_*)]}{T} + O(\frac{d}{T}).
 \end{align}
\end{theorem}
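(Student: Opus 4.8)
The plan is to mimic the Lyapunov-function analysis of nonconvex ProxSVRG \citep{Reddi2016Prox}, but to feed in the variance estimate of Lemma~\ref{lem:1} in place of the usual ``variance vanishes in the limit'' property, and to dispose of the irreducible $\mu^2$ term at the very end by taking $\mu$ small. For each epoch $s$ and inner step $t$ I would work with the potential $R_t^s = \mathbb{E}\big[F(x_t^s) + c_t\|x_t^s-\tilde x^s\|^2\big]$, where $\{c_t\}$ is the sequence in the statement (so $c_m=0$) and $\beta$ is only fixed as $\beta=1/m$ at the end.

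First I would establish a one-step descent inequality. Optimality of the proximal subproblem that defines $x_{t+1}^s=\mbox{Prox}_{\eta\psi}(x_t^s-\eta\hat v_t^s)$, together with its $1/\eta$-strong convexity evaluated at $x_t^s$, gives $\psi(x_{t+1}^s)\le\psi(x_t^s)-\langle\hat v_t^s,x_{t+1}^s-x_t^s\rangle-\tfrac1\eta\|x_{t+1}^s-x_t^s\|^2$. Adding the $L$-smoothness upper bound for $f$ from Assumption~1 and applying Young's inequality to $\langle\nabla f(x_t^s)-\hat v_t^s,\,x_{t+1}^s-x_t^s\rangle$ with weight $\eta$ yields $F(x_{t+1}^s)\le F(x_t^s)-\big(\tfrac1{2\eta}-\tfrac L2\big)\|x_{t+1}^s-x_t^s\|^2+\tfrac\eta2\|\hat v_t^s-\nabla f(x_t^s)\|^2$. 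Taking expectations, inserting Lemma~\ref{lem:1}, and expanding $\|x_{t+1}^s-\tilde x^s\|^2\le(1+\tfrac1\beta)\|x_{t+1}^s-x_t^s\|^2+(1+\beta)\|x_t^s-\tilde x^s\|^2$, the coefficient of $\mathbb{E}\|x_t^s-\tilde x^s\|^2$ becomes exactly $\tfrac{\delta_n L^2d\eta}{b}+c_{t+1}(1+\beta)=c_t$ — which is precisely why the recursion for $c_t$ is chosen this way — and the rest collapses to $R_{t+1}^s\le R_t^s-\Gamma_{t+1}\mathbb{E}\|x_{t+1}^s-x_t^s\|^2+\tfrac{\eta L^2d^2\mu^2}{4}$ with $\Gamma_{t+1}=\tfrac1{2\eta}-\tfrac L2-c_{t+1}(1+\tfrac1\beta)$.

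Next I would telescope: within an epoch over $t=0,\dots,m-1$ and across epochs using $x_0^{s+1}=x_m^s=\tilde x^{s+1}$ together with $c_m=0$, which force $R_m^s=R_0^{s+1}$, so the double sum collapses to $R_0^1-R_m^S\le\mathbb{E}[F(x_0^1)-F(x_*)]$. To turn iterate gaps into the gradient mapping I would use nonexpansiveness of $\mbox{Prox}_{\eta\psi}$: with $\bar x_{t+1}^s=\mbox{Prox}_{\eta\psi}(x_t^s-\eta\nabla f(x_t^s))$ one has $\|x_{t+1}^s-\bar x_{t+1}^s\|\le\eta\|\hat v_t^s-\nabla f(x_t^s)\|$, hence $\|g_\eta(x_t^s)\|^2\le\tfrac2{\eta^2}\|x_{t+1}^s-x_t^s\|^2+2\|\hat v_t^s-\nabla f(x_t^s)\|^2$; the extra variance term is again bounded by Lemma~\ref{lem:1} and contributes only at lower order once the parameters are pinned. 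Taking $\beta=1/m$ gives $(1+\beta)^m\le e$, hence $c_0=\max_t c_t=O(\delta_n L^2d\eta m/b)$, which with $\eta=\rho/(dL)$ is $O(\rho Lm/b)$; the hypothesis $\tfrac{8\rho^2m^2}{b}+\rho\le1$ together with $\rho<\tfrac12$ is exactly what makes $\Gamma_{t+1}\ge\gamma/\eta^2=\tfrac1{2\eta}-L>0$ for all $t$. Combining the descent with this lower bound gives $\gamma\,\mathbb{E}\|g_\eta(x_t^s)\|^2\le R_t^s-R_{t+1}^s+\tfrac{\eta L^2d^2\mu^2}{4}$ up to the lower-order variance remainder; summing, dividing by $T=mS$, and using that the returned iterate is uniform over $\{(x_t^s)\}$ yields the first displayed bound. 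The second follows by substituting $b=[n^{2/3}]$, $m=[n^{1/3}]$, $\rho=\tfrac14$ (for which $\tfrac12+\tfrac14\le1$, so the $b$-condition holds), so $\gamma=\Theta(1/(dL))$ and $\tfrac1{T\gamma}=\Theta(dL/T)$, and $\mu=O(1/\sqrt{dT})$ so that $\tfrac{L^2d^2\mu^2\eta}{4\gamma}=O(d/T)$.

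The hard part is the biasedness emphasized in the paper: the bound of Lemma~\ref{lem:1} does not vanish as $x_t^s,\tilde x^s\to x_*$ because of the $\tfrac12 L^2d^2\mu^2$ floor, so there is no variance cancellation to exploit. The argument must instead balance three scalings simultaneously — $\eta\sim1/(dL)$ so the $d$-inflated variance coefficient $2\delta_nL^2d/b$ is dominated by $c_t$; $b\sim n^{2/3}$ (with $m\sim n^{1/3}$) so that $\Gamma_{t+1}$ stays bounded away from $0$ and the residual variance terms from the gradient-mapping step are $O(1/T)$; and $\mu\sim1/\sqrt{dT}$ so the deterministic smoothing bias lands at the $O(d/T)$ level. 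Checking that the $c_t$ recursion reproduces the Lemma~\ref{lem:1} term exactly and that the stated inequality on $b$ delivers the claimed lower bound on $\Gamma_{t+1}$ is where essentially all the care lies.
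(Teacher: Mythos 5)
Your overall strategy --- the Lyapunov function $R^s_t=\mathbb{E}[F(x^s_t)+c_t\|x^s_t-\tilde x^s\|^2]$, the recursion for $c_t$ chosen to absorb the Lemma~\ref{lem:1} term, $\beta=1/m$ so that $c_t\leq \frac{2L\rho m}{b}$, and the final parameter substitution --- is the same as the paper's. Where you genuinely diverge is in how the gradient mapping is extracted. The paper defines the virtual full-gradient step $\bar x^s_{t+1}=\mbox{Prox}_{\eta\psi}(x^s_t-\eta\nabla f(x^s_t))$ and applies the prox descent lemma (Lemma~2 of Reddi et al.) \emph{twice}, once at $x^s_{t+1}$ and once at $\bar x^s_{t+1}$, summing the two; this plants the term $(L-\frac{1}{2\eta})\|\bar x^s_{t+1}-x^s_t\|^2=-(\frac{1}{2\eta}-L)\eta^2\|g_\eta(x^s_t)\|^2$ directly into the one-step inequality with the clean coefficient $\gamma=\frac{\eta}{2}-L\eta^2$, while the cross term $\langle x^s_{t+1}-\bar x^s_{t+1},\nabla f(x^s_t)-\hat v^s_t\rangle$ is half-cancelled by the $-\frac{1}{2\eta}\|x^s_{t+1}-\bar x^s_{t+1}\|^2$ term it generates. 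You instead derive descent only in $\|x^s_{t+1}-x^s_t\|^2$ and convert afterwards via nonexpansiveness of the prox, $\|g_\eta(x^s_t)\|^2\leq\frac{2}{\eta^2}\|x^s_{t+1}-x^s_t\|^2+2\|\hat v^s_t-\nabla f(x^s_t)\|^2$. This works in principle but costs you two things the paper's route avoids. First, the extra $2\|\hat v^s_t-\nabla f(x^s_t)\|^2$ must be fed back through Lemma~\ref{lem:1}, which inflates the coefficient in the $c_t$ recursion (and the $\mu^2$ floor), so the stated recursion $c_t=\frac{\delta_nL^2d\eta}{b}+c_{t+1}(1+\beta)$ and the condition $\frac{8\rho^2m^2}{b}+\rho\leq1$ no longer yield the theorem's exact constants. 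Second, your claim that the hypothesis forces $\Gamma_{t+1}\geq\frac{1}{2\eta}-L$ is not correct: the condition only gives $c_{t+1}(1+\frac{1}{\beta})+\frac{L}{2}\leq\frac{L}{2\rho}$, hence $\Gamma_{t+1}\geq\frac{dL}{2\rho}-\frac{L}{2\rho}=\frac{(d-1)L}{2\rho}$, which is strictly smaller than $\frac{1}{2\eta}-L=\frac{(d-2\rho)L}{2\rho}$ when $\rho<\frac12$, and degenerates to zero at $d=1$. So your argument recovers the $O(\frac{d}{T})$ rate for $d\geq2$ after re-tuning constants, but it does not reproduce the theorem's displayed bounds verbatim; the paper's double-application of the prox descent lemma is precisely what lets the coefficient $\frac{1}{2\eta}-L$ come out for free without any additional variance leakage.
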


\begin{remark}
 Theorem \ref{th:1} shows that, given $\mu=O(\frac{1}{\sqrt{dT}})$, $b=[n^{\frac{2}{3}}]$ and $m=[n^{\frac{1}{3}}]$,
 the ZO-ProxSVRG-CooSGE has $O(\frac{d}{T})$ convergence rate.
\end{remark}

\begin{theorem} \label{th:2}
 Assume the sequence $\{(x^s_t)_{t=1}^m\}_{s=1}^S$ generated from Algorithm \ref{alg:1} using the GauSGE,
 and define a sequence $\{c_t\}_{t=1}^m$ as follows: for $s=1,2,\cdots,S$
  \begin{equation}
  c_t = \left\{
  \begin{aligned}
   & \frac{3\delta_n L^2\eta}{b} + c_{t+1}(1+\beta), \ 0 \leq t \leq m-1; \\
   & 0, \ t = m
 \end{aligned}
  \right.\end{equation}
 where $\beta>0$. Let $\eta = \frac{\rho}{L} \ (0<\rho<\frac{1}{2})$ and $b$ satisfies the following inequality:
 \begin{align}
  \frac{24\rho^2m^2}{b} + \rho\leq 1,
 \end{align}
 then we have
 \begin{align}
  \mathbb{E} \|g_{\eta}(x^s_t)\|^2 \leq & \frac{\mathbb{E}[F(x^1_0)-F(x_*)]}{T\gamma}+ (1+\frac{6\delta_n}{b})(d+6)^3 \frac{L^2\mu^2\eta}{\gamma} \nonumber \\
  & + (2 + \frac{12\delta_n}{b})(2d+9)\frac{\sigma^2\eta}{\gamma},
 \end{align}
 where $\gamma=\frac{\eta}{2}-\eta^2 L$ and $x^*$ is an optimal solution of the problem \eqref{eq:1}.
 Further let $b=[n^{\frac{2}{3}}]$, $m=[n^{\frac{1}{3}}]$, $\rho=\frac{1}{6}$ and $\mu=O(\frac{1}{d\sqrt{T}})$, we have
 \begin{align}
 \mathbb{E} \|g_{\eta}(x^s_t)\|^2 \leq &\frac{18L\mathbb{E}[F(x^1_0) - F(x_*)]}{T}
  + O(\frac{d}{T})  \nonumber \\
 &+ O(d \sigma^2).
 \end{align}
\end{theorem}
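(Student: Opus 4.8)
The plan is to adapt the nonconvex proximal-SVRG analysis of \citet{Reddi2016Prox} to the biased zeroth-order setting, using Lemma~\ref{lem:2} to absorb the extra error produced by the GauSGE. Write $x^s_{t+1}=\mbox{Prox}_{\eta\psi}(x^s_t-\eta\hat v^s_t)$ and introduce the \emph{estimated gradient mapping} $\bar g_\eta(x^s_t)=\frac1\eta(x^s_t-x^s_{t+1})$, so that $\|x^s_{t+1}-x^s_t\|^2=\eta^2\|\bar g_\eta(x^s_t)\|^2$. First I would invoke the $L$-smoothness of $f$ (Assumption~1) to bound $f(x^s_{t+1})$ by $f(x^s_t)+\langle\nabla f(x^s_t),x^s_{t+1}-x^s_t\rangle+\frac L2\|x^s_{t+1}-x^s_t\|^2$, and combine it with the first-order optimality inequality for the proximal subproblem \eqref{eq:3} evaluated at $y=x^s_t$, which (using the $\frac1\eta$-strong convexity of the prox objective) gives $\psi(x^s_{t+1})\le\psi(x^s_t)-\frac1\eta\|x^s_{t+1}-x^s_t\|^2+\langle\hat v^s_t,x^s_t-x^s_{t+1}\rangle$. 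Adding the two yields
\begin{align}
F(x^s_{t+1})\le F(x^s_t)-\Big(\tfrac1\eta-\tfrac L2\Big)\|x^s_{t+1}-x^s_t\|^2+\langle\nabla f(x^s_t)-\hat v^s_t,\,x^s_{t+1}-x^s_t\rangle.\nonumber
\end{align}

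Next I would split the inner product with Young's inequality, $\langle\nabla f(x^s_t)-\hat v^s_t,x^s_{t+1}-x^s_t\rangle\le\frac1{2\beta}\|\nabla f(x^s_t)-\hat v^s_t\|^2+\frac\beta2\|x^s_{t+1}-x^s_t\|^2$, take expectations, and substitute the variance bound of Lemma~\ref{lem:2}. This couples the descent to $\frac{6\delta_n L^2}{b}\mathbb E\|x^s_t-\tilde x^s\|^2$ plus the non-vanishing bias terms of order $(d+6)^3L^2\mu^2$ and $(2d+9)\sigma^2$. To control the coupling term I would use the Lyapunov function $R^s_t=\mathbb E\big[F(x^s_t)+c_t\|x^s_t-\tilde x^s\|^2\big]$ together with the expansion $\|x^s_{t+1}-\tilde x^s\|^2\le(1+\tfrac1\beta)\|x^s_{t+1}-x^s_t\|^2+(1+\beta)\|x^s_t-\tilde x^s\|^2$; this is exactly what produces the stated recursion $c_t=\frac{3\delta_nL^2\eta}{b}+c_{t+1}(1+\beta)$ and gives $R^s_{t+1}\le R^s_t-\gamma_t\,\mathbb E\|\bar g_\eta(x^s_t)\|^2+(\text{bias terms})$, where the constant $\gamma_t$ involves $\eta,L,\beta$ and $c_{t+1}$.

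Then I would pin down the free parameters. Choosing $\beta=1/m$ keeps $(1+\beta)^m\le e$, so the solution of the $c_t$-recursion satisfies $c_0\le\frac{3\delta_nL^2\eta}{b}\cdot\frac{(1+\beta)^m-1}{\beta}=O\big(\frac{m L^2\eta}{b}\big)$; with $\eta=\rho/L$ the condition $\frac{24\rho^2m^2}{b}+\rho\le1$ is precisely what is needed to guarantee $\gamma_t\ge\gamma:=\eta/2-\eta^2L>0$ for all $t$. Telescoping $R^s_t$ over $t=0,\dots,m-1$ within an epoch — where $c_m=0$ and, since $\tilde x^{s+1}=x^{s+1}_0=x^s_m$, the boundary terms collapse — and then over $s=1,\dots,S$ turns the sum into $\frac{\mathbb E[F(x^1_0)-F(x_*)]}{T\gamma}+(\text{bias terms})$ for the average of $\mathbb E\|\bar g_\eta(x^s_t)\|^2$. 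To pass from $\bar g_\eta$ to the true gradient mapping $g_\eta$, I would use non-expansiveness of the prox operator, $\|g_\eta(x^s_t)-\bar g_\eta(x^s_t)\|\le\|\nabla f(x^s_t)-\hat v^s_t\|$, hence $\mathbb E\|g_\eta(x^s_t)\|^2\le 2\mathbb E\|\bar g_\eta(x^s_t)\|^2+2\mathbb E\|\nabla f(x^s_t)-\hat v^s_t\|^2$, and bound the last term once more by Lemma~\ref{lem:2}; this only inflates constants. Since the output is drawn uniformly from $\{(x^s_t)\}$, dividing by $T=mS$ and finally substituting $b=[n^{2/3}]$, $m=[n^{1/3}]$, $\rho=1/6$ and $\mu=O(1/(d\sqrt T))$ makes the $\mu^2$ contribution $O(d/T)$ and leaves the irreducible $O(d\sigma^2)$, giving the claimed bound.

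The main obstacle is the loss of unbiasedness: $\mathbb E_{\mathcal I_t}[\hat v^s_t]=\hat\nabla f(x^s_t)\ne\nabla f(x^s_t)$, so the cross term $\langle\nabla f(x^s_t)-\hat v^s_t,\cdot\rangle$ cannot be annihilated in expectation as in the standard SVRG argument; it must be carried as a genuine second-moment quantity and tamed jointly by Lemma~\ref{lem:2}, the Lyapunov coupling, and the restriction on $(\rho,m,b)$. The delicate bookkeeping is verifying that $\frac{24\rho^2m^2}{b}+\rho\le1$ is exactly the inequality that keeps $\gamma$ bounded away from $0$ once the $c_t$ terms are folded in; the $\mu^2$ and $\sigma^2$ terms themselves propagate linearly through the telescoping and require no cleverness.
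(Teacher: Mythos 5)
Your overall architecture (variance bound from Lemma \ref{lem:2}, Lyapunov function $R^s_t=\mathbb{E}[F(x^s_t)+c_t\|x^s_t-\tilde{x}^s\|^2]$, the recursion $c_t=\frac{3\delta_nL^2\eta}{b}+c_{t+1}(1+\beta)$ with $\beta=1/m$, telescoping over $t$ and $s$) matches the paper's. But your descent lemma is structurally different, and that difference creates two genuine gaps. The paper does not work with the estimated gradient mapping $\bar g_\eta(x^s_t)=\frac1\eta(x^s_t-x^s_{t+1})$ at all: it introduces the auxiliary point $\bar{x}^s_{t+1}=\mbox{Prox}_{\eta\psi}(x^s_t-\eta\nabla f(x^s_t))$ and applies Lemma 2 of \citet{Reddi2016Prox} twice (once at $z=x^s_t$, once at $z=\bar{x}^s_{t+1}$). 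This produces \emph{two separate} quadratic terms: $(\frac{L}{2}-\frac{1}{2\eta})\|x^s_{t+1}-x^s_t\|^2$, which absorbs the Lyapunov coupling $c_{t+1}(1+\frac1\beta)$, and $(L-\frac{1}{2\eta})\|\bar{x}^s_{t+1}-x^s_t\|^2=-\frac{\gamma}{\eta^2}\|\bar{x}^s_{t+1}-x^s_t\|^2$, which directly yields $-\gamma\,\mathbb{E}\|g_\eta(x^s_t)\|^2$ for the \emph{true} gradient mapping. In your one-sequence version there is only the single term $\|x^s_{t+1}-x^s_t\|^2$, which must simultaneously absorb $c_{t+1}(1+\frac1\beta)$ \emph{and} supply the negative multiple of $\|\bar g_\eta\|^2$. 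The condition $\frac{24\rho^2m^2}{b}+\rho\le1$ only guarantees $\frac{L}{2}+c_{t+1}(1+\frac1\beta)\le\frac{1}{2\eta}$, i.e.\ that this coefficient is nonnegative after absorption — it does not leave a residual of size $\gamma/\eta^2=\frac{1}{2\eta}-L$. Concretely, with $\rho=1/6$ and $m^2/b\approx1$ one gets $c_{t+1}(1+\frac1\beta)\approx\frac{12L\rho m^2}{b}\approx 2L$, which exhausts essentially all of $\frac{1}{2\eta}-\frac{L}{2}=\frac{5L}{2}$; so your $\gamma_t$ can be far below $\frac{\eta}{2}-\eta^2L$, and the stated constants (and possibly positivity of the descent coefficient) are not recovered under the theorem's hypothesis.

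The second gap is in your final conversion step. The inequality $\mathbb{E}\|g_\eta(x^s_t)\|^2\le2\mathbb{E}\|\bar g_\eta(x^s_t)\|^2+2\mathbb{E}\|\nabla f(x^s_t)-\hat v^s_t\|^2$ is correct (by non-expansiveness of the prox), but bounding the last term by Lemma \ref{lem:2} reintroduces $\frac{6\delta_nL^2}{b}\mathbb{E}\|x^s_t-\tilde{x}^s\|^2$ \emph{outside} the Lyapunov telescoping, where nothing cancels it; you would need an additional argument to sum and control these terms across the epoch. The paper sidesteps this entirely because $g_\eta$ appears natively in its descent inequality. A smaller point: you use $\beta$ both as the Young parameter on the cross term and as the Lyapunov parameter $1/m$; to obtain the stated $\frac{L^2\mu^2\eta}{\gamma}$ and $\frac{\sigma^2\eta}{\gamma}$ factors, the Young parameter must be $1/\eta$ (giving the $\frac{\eta}{2}\mathbb{E}\|\nabla f(x^s_t)-\hat v^s_t\|^2$ term), not $1/m$.
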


\begin{remark}
 Theorem \ref{th:2} shows that given $\mu=O(\frac{1}{d\sqrt{T}})$, $b=[n^{\frac{2}{3}}]$ and $m=[n^{\frac{1}{3}}]$,
 the ZO-ProxSVRG-GauSGE has $O(\frac{d}{T}+d\sigma^2)$ convergence rate, in which the part $O(d\sigma^2)$ generates from the
 GauSGE.
\end{remark}

\subsection{Convergence Analysis of ZO-ProxSAGA }
In this subsection, we provide the convergence analysis of the ZO-ProxSAGA
with the CooSGE (\textbf{ZO-ProxSAGA-CooSGE})
and the GauSGE (\textbf{ZO-ProxSAGA-GauSGE}), respectively.

\begin{theorem} \label{th:3}
 Assume the sequence $\{x_t\}_{t=1}^T$ generated from Algorithm \ref{alg:2} using the CooSGE,
 and define a positive sequence $\{c_t\}_{t=1}^T$ as follows:
 \begin{align}
  c_t = \frac{ L^2d\eta}{b}+c_{t+1}(1-p)(1+\beta)
 \end{align}
 where $\beta>0$.
 Let $c_T=0$, $\eta =\frac{\rho}{Ld} \ (0< \rho < \frac{1}{2})$, and $b$ satisfies the following inequality:
 \begin{align}
  \frac{32\rho^2n^2}{b^3} + \rho \leq 1,
 \end{align}
 then we have
 \begin{align}
  \mathbb{E} \|g_{\eta}(x_t)\|^2 \leq \frac{\mathbb{E}[F(x_0)-F(x_*)]}{T\gamma} + \frac{L^2 d^2\mu^2\eta}{4\gamma},
 \end{align}
 where $\gamma = \frac{\eta}{2} - L\eta^2$ and $x^*$ is an optimal solution of the problem \eqref{eq:1}.
 Further let $b = [n^{\frac{2}{3}}]$, $\rho = \frac{1}{8}$ and $\mu=O(\frac{1}{\sqrt{dT}})$,
 we have
 \begin{align}
 \mathbb{E} \|g_{\eta}(x_t)\|^2 \leq \frac{64dL\mathbb{E}[F(x_0) - F(x_*)]}{3T} + O(\frac{d}{T}).
 \end{align}
\end{theorem}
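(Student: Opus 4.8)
The plan is to run a potential-function (Lyapunov) argument in the spirit of the first-order Prox-SAGA analysis of \citet{Reddi2016Prox}, but with the zeroth-order bias $\hat{\nabla} f(x_t)-\nabla f(x_t)$ carried explicitly through the recursion and driven to zero by the choice of the smoothing parameter $\mu$, rather than disappearing in expectation as in the unbiased case. Write $\hat{g}_t := \frac{1}{\eta}(x_t-x_{t+1})$ for the stochastic gradient mapping produced by Algorithm~\ref{alg:2} and $P_t := \frac{1}{n}\sum_{i=1}^n\|x_t-z^t_i\|^2$ for the ``delay'' quantity that drives Lemma~\ref{lem:3}. \emph{Step 1 (one-step descent).} Combining the $L$-smoothness of $f$ (Assumption~1) with the first-order optimality of $x_{t+1}=\mathrm{Prox}_{\eta\psi}(x_t-\eta\hat{v}_t)$ and the convexity of $\psi$, then splitting the cross term $\langle\hat{v}_t-\nabla f(x_t),\hat{g}_t\rangle$ by Young's inequality, yields
\begin{align}
 F(x_{t+1}) \le F(x_t) - \Big(\tfrac{\eta}{2}-\tfrac{L\eta^2}{2}\Big)\|\hat{g}_t\|^2 + \tfrac{\eta}{2}\|\hat{v}_t-\nabla f(x_t)\|^2 .
\end{align}
Taking the conditional expectation over $\mathcal{I}_t$ and invoking Lemma~\ref{lem:3} bounds the last term by $\frac{L^2d\eta}{b}\mathbb{E}[P_t]+\frac{L^2d^2\mu^2\eta}{4}$; the Young parameter $\tfrac{\eta}{2}$ is exactly what turns the coefficient $\frac{2L^2d}{b}$ of Lemma~\ref{lem:3} into the $\frac{L^2d\eta}{b}$ appearing in the definition of $c_t$.

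\emph{Step 2 (controlling the delay and building the potential).} Since $z^{t+1}_i$ equals $x_t$ when index $i$ is refreshed at iteration $t$ (with some refresh probability $p$) and equals $z^t_i$ otherwise, a second Young split of $\|x_{t+1}-z^t_i\|^2$ gives
\begin{align}
 \mathbb{E}[P_{t+1}] \le (1-p)(1+\beta)\mathbb{E}[P_t] + \big(1+\tfrac{1}{\beta}\big)\eta^2\mathbb{E}\|\hat{g}_t\|^2 .
\end{align}
Forming $R_t := \mathbb{E}[F(x_t)]+c_t\mathbb{E}[P_t]$ with the $c_t$ of the statement, the recursion $c_t=\frac{L^2d\eta}{b}+c_{t+1}(1-p)(1+\beta)$ is engineered so that the $\mathbb{E}[P_t]$ contributions of Step~1 and of $\mathbb{E}[P_{t+1}]$ collapse into the single coefficient $c_t$, leaving
\begin{align}
 R_{t+1} \le R_t - \Big(\tfrac{\eta}{2}-\tfrac{L\eta^2}{2}-c_{t+1}\big(1+\tfrac{1}{\beta}\big)\eta^2\Big)\mathbb{E}\|\hat{g}_t\|^2 + \tfrac{L^2d^2\mu^2\eta}{4}.
\end{align}
Solving the $c_t$ recursion with $c_T=0$ and choosing $\beta$ of order $p$ gives $\max_t c_t=O\!\big(\frac{L^2d\eta}{bp}\big)$; with $\eta=\frac{\rho}{Ld}$ and $p$ of order $b/n$, the hypothesis $\frac{32\rho^2n^2}{b^3}+\rho\le1$ is precisely the inequality forcing $c_{t+1}(1+\tfrac{1}{\beta})\eta^2\le\frac{L\eta^2}{2}$, so the bracket is at least $\gamma=\frac{\eta}{2}-L\eta^2>0$.

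\emph{Step 3 (telescoping and plugging in constants).} Summing the previous display over $t=0,\dots,T-1$, using $z^0_i=x_0$ so that $P_0=0$ and $R_0=\mathbb{E}[F(x_0)]$, together with $c_T=0$ and $R_T\ge F(x_*)$, gives $\frac{1}{T}\sum_t\mathbb{E}\|\hat{g}_t\|^2\le\frac{\mathbb{E}[F(x_0)-F(x_*)]}{T\gamma}+\frac{L^2d^2\mu^2\eta}{4\gamma}$. To pass from $\hat{g}_t$ to the true gradient mapping I would use $\|g_{\eta}(x_t)\|\le\|\hat{g}_t\|+\|\hat{v}_t-\nabla f(x_t)\|$ (nonexpansiveness of $\mathrm{Prox}_{\eta\psi}$); the extra $\sum_t\mathbb{E}\|\hat{v}_t-\nabla f(x_t)\|^2$ is controlled by telescoping the Step~2 recursion to bound $\sum_t\mathbb{E}[P_t]$ by $O(\eta^2/p^2)\sum_t\mathbb{E}\|\hat{g}_t\|^2$ --- again small, since it carries the same $n^2/b^3$ factor divided by $d$, hence absorbable --- leaving the first displayed bound of the theorem. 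Finally, with $b=[n^{2/3}]$, $\rho=\frac{1}{8}$ (so $\eta=\frac{1}{8Ld}$ and $\gamma\ge\frac{3}{64Ld}$ for $d\ge1$), and $\mu=O(1/\sqrt{dT})$, the term $\frac{L^2d^2\mu^2\eta}{4\gamma}$ becomes $O(d/T)$ and one obtains $\mathbb{E}\|g_{\eta}(x_t)\|^2\le\frac{64dL\,\mathbb{E}[F(x_0)-F(x_*)]}{3T}+O(d/T)$.

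The main obstacle is the loss of unbiasedness: in classical SAGA the conditional mean of $\hat{v}_t-\nabla f(x_t)$ is zero and the cross term disappears, whereas here $\mathbb{E}_{\mathcal{I}_t}[\hat{v}_t]=\hat{\nabla} f(x_t)\ne\nabla f(x_t)$, so this error must be propagated through the whole recursion; Lemma~\ref{lem:3} is what makes this tractable, at the price of the residual deterministic bias $\frac{L^2d^2\mu^2}{2}$, which is exactly why the rate keeps an additive $O(d/T)$ term. The delicate part is getting the three interacting quantities --- the descent slack $\frac{L\eta^2}{2}$, the contraction factor $(1-p)(1+\beta)$ of the delay recursion, and the variance coefficient $\frac{2L^2d}{b}$ of Lemma~\ref{lem:3} --- to close up so that the effective descent coefficient never falls below $\gamma$; the somewhat unusual constraint $\frac{32\rho^2n^2}{b^3}+\rho\le1$, with its $n^2/b^3$ scaling reflecting $1/p^2\asymp(n/b)^2$ times the $1/b$ gained from the mini-batch, is exactly the condition recording when this bookkeeping succeeds.
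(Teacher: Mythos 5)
Your overall architecture is sound and in large part parallels the paper's: the same Lyapunov function $\Phi_t=\mathbb{E}[F(x_t)+c_t\frac{1}{n}\sum_i\|x_t-z_i^t\|^2]$, the same delay recursion with refresh probability $p\geq\frac{b}{2n}$ and Young parameter $\beta=\frac{b}{4n}$, the same use of Lemma \ref{lem:3} with Young weight $\frac{\eta}{2}$ to produce the coefficient $\frac{L^2d\eta}{b}$ in $c_t$, and the same bookkeeping showing that $\frac{32\rho^2n^2}{b^3}+\rho\leq 1$ closes the recursion. Where you genuinely diverge is in how the gradient mapping enters. You measure progress by the stochastic step $\hat{g}_t=\frac{1}{\eta}(x_t-x_{t+1})$ and convert to $g_\eta(x_t)$ only at the end via nonexpansiveness of the proximal operator. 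The paper instead uses the two-point version of Lemma 2 of \citet{Reddi2016Prox}: it introduces the auxiliary iterate $\bar{x}_{t+1}=\mbox{Prox}_{\eta\psi}(x_t-\eta\nabla f(x_t))$, applies the prox-descent inequality once at $x_{t+1}$ (with $z=\bar{x}_{t+1}$) and once at $\bar{x}_{t+1}$ (with $z=x_t$), and sums; the cross term $\langle x_{t+1}-\bar{x}_{t+1},\nabla f(x_t)-\hat{v}_t\rangle$ is then absorbed by the leftover $-\frac{1}{2\eta}\|x_{t+1}-\bar{x}_{t+1}\|^2$, so that $\|\bar{x}_{t+1}-x_t\|^2=\eta^2\|g_\eta(x_t)\|^2$ appears directly in the one-step descent with coefficient $\frac{1}{2\eta}-L$ and no conversion is ever needed.

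This difference is not cosmetic: your Step 3 is the weak point of the proposal. The bound $\|g_\eta(x_t)\|^2\leq 2\|\hat{g}_t\|^2+2\|\hat{v}_t-\nabla f(x_t)\|^2$ at least doubles the leading coefficient in front of $\frac{\mathbb{E}[F(x_0)-F(x_*)]}{T\gamma}$ and reintroduces a full copy of the variance bound of Lemma \ref{lem:3}, so the first display of the theorem with constant exactly $\frac{1}{T\gamma}$, and hence the final constant $\frac{64dL}{3}$, would not be recovered as stated; you would obtain the same $O(d/T)$ rate but with strictly larger constants, and the absorption of the extra $\sum_t\mathbb{E}[P_t]$ term is asserted rather than carried out. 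If you want the theorem with its stated constants, adopt the paper's two-point argument; if you only want the rate, your route works but you should say explicitly that the constants degrade by a fixed factor. Your Step 1 descent inequality and the closing of the $c_t$ recursion are otherwise correct as written.
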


\begin{remark}
 Theorem \ref{th:3} shows that given $\mu=O(\frac{1}{\sqrt{dT}})$ and $b=[n^{\frac{2}{3}}]$, the ZO-ProxSAGA-CooSGE
 has $O(\frac{d}{T})$ convergence rate.
\end{remark}

\begin{theorem} \label{th:4}
 Assume the sequence $\{x_t\}_{t=1}^T$ generated from Algorithm \ref{alg:2} using the GauSGE,
 and define a positive sequence $\{c_t\}_{t=1}^T$ as follows:
 \begin{align}
  c_t = \frac{ 3L^2\eta}{b}+c_{t+1}(1-p)(1+\beta),
 \end{align}
 where $\beta>0$. Let $c_T=0$, $\eta = \frac{\rho}{L} (0< \rho < \frac{1}{2})$ and $b$ satisfies the following inequality:
 \begin{align}
  \frac{96\rho^2n^2}{b^3} + \rho \leq 1,
 \end{align}
 then we have
 \begin{align}
  \mathbb{E} \|g_{\eta}(x_t)\|^2 \leq & \frac{\mathbb{E}[F(x_0)-F(x_*)]}{T\gamma} + \frac{(2+\frac{12}{b})(2d+9)\sigma^2\eta}{\gamma}\nonumber \\
  & + \frac{(1+\frac{6}{b})(d+6)^3 L^2\mu^2\eta}{\gamma},
 \end{align}
 where $\gamma=\frac{1}{2\eta}-L\eta^2$ and $x^*$ is an optimal solution of the problem \eqref{eq:1}. Further given $b=[n^{\frac{2}{3}}]$,
 $\rho = \frac{1}{12}$ and $\mu=O(\frac{1}{d\sqrt{T}})$, we have
 \begin{align}
 \mathbb{E} \|g_{\eta}(x_t)\|^2 \leq  & \frac{144L\mathbb{E}[F(x_0)-F(x_*)]}{5T}+ O(\frac{d}{T})+ O(d\sigma^2).
 \end{align}
\end{theorem}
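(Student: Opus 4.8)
The plan is to adapt the potential-function argument for Prox-SAGA of Reddi et al. to the zeroth-order setting, carrying along the extra bias that the GauSGE introduces. Define the Lyapunov sequence
\[
R_t = \mathbb{E}\Big[F(x_t) + \tfrac{c_t}{n}\sum_{i=1}^n\|x_t - z_i^t\|^2\Big],
\]
with $c_t$ the sequence from the statement and $p = 1-(1-1/n)^b$ (approximately $b/n$) the probability that a fixed table entry $z_i$ is refreshed at step $t$. First I would derive a one-step descent inequality: starting from $L$-smoothness (Assumption~1) applied along $x_{t+1} = \mathrm{Prox}_{\eta\psi}(x_t-\eta\hat v_t)$, using the optimality condition of the proximal map together with convexity of $\psi$, and relating the realized prox-step $\tfrac1\eta(x_t-x_{t+1})$ to the true gradient mapping $g_\eta(x_t)$ via non-expansiveness of $\mathrm{Prox}_{\eta\psi}$, one obtains
\[
\mathbb{E}[F(x_{t+1})] \le \mathbb{E}[F(x_t)] - \gamma\,\mathbb{E}\|g_\eta(x_t)\|^2 + (\text{const}\cdot\eta)\,\mathbb{E}\|\hat v_t-\nabla f(x_t)\|^2,
\]
with $\gamma = \tfrac\eta2 - L\eta^2$ after collecting the $\|g_\eta\|^2$ coefficients.

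Second — and this is where the zeroth-order case genuinely differs from classical SAGA — I would bound the error term $\mathbb{E}\|\hat v_t-\nabla f(x_t)\|^2$ using Lemma~\ref{lem:4}, which replaces it by $\tfrac{6L^2}{nb}\sum_{i}\mathbb{E}\|x_t-z_i^t\|^2$ plus the two residual bias constants $(2+\tfrac{12}{b})(2d+9)\sigma^2$ and $(1+\tfrac6b)(d+6)^3L^2\mu^2$. Because $\mathbb{E}[\hat v_t]=\hat\nabla f(x_t)\ne\nabla f(x_t)$, these constants never vanish and must be tracked through the whole telescoping sum. Third, I would bound the newly appearing $\tfrac1n\sum_i\|x_t-z_i^t\|^2$ by expanding $\tfrac1n\sum_i\mathbb{E}\|x_{t+1}-z_i^{t+1}\|^2$: with probability $p$ the entry is reset to $x_t$ and with probability $1-p$ it is unchanged, while $\|x_{t+1}-x_t\|=\eta\|\hat v_t\|$ is controlled via Assumption~2 and the Lemma~\ref{lem:4} bound. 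A Young's inequality with parameter $\beta$ then produces exactly the recursion $c_t = \tfrac{3L^2\eta}{b}+c_{t+1}(1-p)(1+\beta)$, so that summing the two pieces gives $R_{t+1} \le R_t - \gamma\,\mathbb{E}\|g_\eta(x_t)\|^2 + (\text{bias})$ provided the coefficient of $\sum_i\|x_t-z_i^t\|^2$ is fully absorbed into $c_t$. The constraint $\tfrac{96\rho^2n^2}{b^3}+\rho\le1$ together with $\eta=\rho/L$ is precisely what keeps $\gamma>0$ and $\{c_t\}$ bounded once $\beta$ is chosen on the order of $p/(1-p)$.

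Finally I would telescope $R_{t+1}\le R_t - \gamma\,\mathbb{E}\|g_\eta(x_t)\|^2 + \text{bias}$ from $t=0$ to $T-1$, use $c_T=0$ and the initialization $z_i^0=x_0$ so that $R_0=\mathbb{E}[F(x_0)]$ and $R_T\ge\mathbb{E}[F(x_*)]$, divide by $T$, and invoke the uniform-random output rule to replace the average of $\mathbb{E}\|g_\eta(x_t)\|^2$ by $\mathbb{E}\|g_\eta(x)\|^2$; this yields the first displayed bound. Substituting $b=[n^{2/3}]$ (so $n^2/b^3=O(1)$), $\rho=1/12$ (so $\gamma=5/(144L)$ and $1/(T\gamma)=144L/(5T)$), and $\mu=O(1/(d\sqrt T))$ (which makes $(d+6)^3L^2\mu^2\eta/\gamma=O(d/T)$) leaves the irreducible $(2+\tfrac{12}{b})(2d+9)\sigma^2\eta/\gamma=O(d\sigma^2)$, giving the stated rate. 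I expect the main obstacle to be the third step: closing the $c_t$ recursion and keeping $\gamma$ positive simultaneously under the prescribed $b$ and $\rho$ requires a delicate choice of $\beta$ balancing the $(1-p)(1+\beta)$ contraction against the $\eta^2\|\hat v_t\|^2$ inflation, and this is the only place where the with-replacement sampling probability $p$ and the $n^2/b^3$ scaling interact nontrivially.
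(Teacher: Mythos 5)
Your outline reproduces the paper's argument essentially step for step: the same Lyapunov function $\Omega_t=\mathbb{E}[F(x_t)+\frac{c_t}{n}\sum_i\|x_t-z_i^t\|^2]$, the same use of Lemma \ref{lem:4} to inject the $\frac{6L^2}{nb}\sum_i\|x_t-z_i^t\|^2$ term plus the two non-vanishing bias constants, the same Young-inequality expansion with $p=1-(1-1/n)^b\ge b/(2n)$ yielding the recursion $c_t=\frac{3L^2\eta}{b}+c_{t+1}(1-p)(1+\beta)$ with $\beta=\Theta(b/n)$, and the same telescoping and parameter substitution (your numerics $\gamma=\frac{5}{144L}$ and $\frac{96\rho^2n^2}{b^3}+\rho=\frac34$ check out). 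One correction to your third step: $\|x_{t+1}-x_t\|=\eta\|\hat v_t\|$ is false in the presence of the proximal operator, and the paper does not bound $\|x_{t+1}-x_t\|^2$ via Assumption 2 at all --- instead it keeps the negative coefficient $(\frac{L}{2}-\frac{1}{2\eta})\|x_{t+1}-x_t\|^2$ coming from the descent inequality (Lemma 2 of Reddi et al.\ applied at $x_{t+1}$ and at the virtual iterate $\bar x_{t+1}=\mbox{Prox}_{\eta\psi}(x_t-\eta\nabla f(x_t))$) and shows that $\frac{L}{2}+c_{t+1}(1+\frac{1-p}{\beta})\le\frac{1}{2\eta}$ under $\frac{96\rho^2n^2}{b^3}+\rho\le1$, so that the whole $\|x_{t+1}-x_t\|^2$ contribution is simply dropped; this (not positivity of $\gamma$, which only needs $\rho<\frac12$) is the actual role of that constraint.
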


\begin{remark}
 Theorem \ref{th:4} shows that given $\mu=O(\frac{1}{d\sqrt{T} })$ and $b=[n^{\frac{2}{3}}]$,
 the ZO-ProxSAGA-GauSGE has $O(\frac{d}{T}+d\sigma^2)$ convergence rate, in which the part $O(d\sigma^2)$ generates from the
 GauSGE.
\end{remark}

\emph{All related proofs are in the supplementary document.}
\section{Experiments}

In this section, we will compare the proposed algorithms
(ZO-ProxSVRG-CooSGE, ZO-ProxSVRG-GauSGE, ZO-ProxSAGA-CooSGE,
ZO-ProxSAGA-GauSGE) with the RSPGF method \citep{ghadimi2016mini}
on two applications: \textbf{black-box binary classification} and \textbf{ adversarial
attacks on black-box deep neural networks (DNNs)}.
Note that the RSPGF  uses the GauSGE to estimate gradient.

\subsection{Black-Box Binary Classification }

\subsubsection{Experimental Setup}
In this experiment, we apply our algorithms to learn the black-box binary classification problem.
Specifically, given a set of training samples $\{a_i,l_i\}_{i=1}^n$,
where $a_i\in \mathbb{R}^d$ and $l_i \in \{-1,1\}$,  we find the
optimal predictor $x\in \mathbb{R}^d$ by solving the following problem:
\begin{align}
\min_{x\in \mathbb{R}^d} \frac{1}{n}\sum_{i=1}^n f_i(x) + \lambda_1\|x\|_1 + \lambda_2\|x\|_2^2,
\end{align}
where $f_i(x)$ is the black-box loss function,
that only returns the function value given an input.
Here, we specify the non-convex \emph{sigmoid loss} function
$f_i(x)=\frac{1}{1+\exp(l_i a_i^Tx)}$ in the black-box setting.

In the experiment, we use
the publicly available real datasets\footnote{\emph{20news} is from the website \url{https://cs.nyu.edu/~roweis/data.html};
\emph{a9a}, \emph{w8a} and \emph{covtype.binary} are from the website \url{www.csie.ntu.edu.tw/~cjlin/libsvmtools/datasets/}.},
which are summarized in Table \ref{tab:2}.
In the algorithms, we fix the mini-batch size $b=20$,
the smoothing parameters $\mu=\frac{1}{d\sqrt{t}}$ in the GauSGE and
$\mu=\frac{1}{\sqrt{d t}}$ in the GooSGE.
Meanwhile, we fix $\lambda_1 = \lambda_2 =10^{-5}$, and use the same initial solution $x_0$
from the standard normal distribution in each experiment.
For each dataset, we use half of the samples as training data,
and the rest as testing data.

\begin{table}
  \centering
  \caption{Real data for black-box binary classification } \label{tab:2}
  \begin{tabular}{c|c|c|c}
  \hline
  datasets & $\#samples$ & $\#features$ & $\#classes$ \\ \hline
  \emph{20news} & 16,242 &  100 & 2 \\
  \emph{a9a}   & 32,561 & 123 & 2 \\
  \emph{w8a} & 64,700 & 300 & 2 \\
  \emph{covtype.binary} & 581,012 & 54 & 2\\
  \hline
  \end{tabular}
\end{table}

\begin{figure*}[htbp]
\centering
\subfigure[20news]{\includegraphics[width=0.23\textwidth]{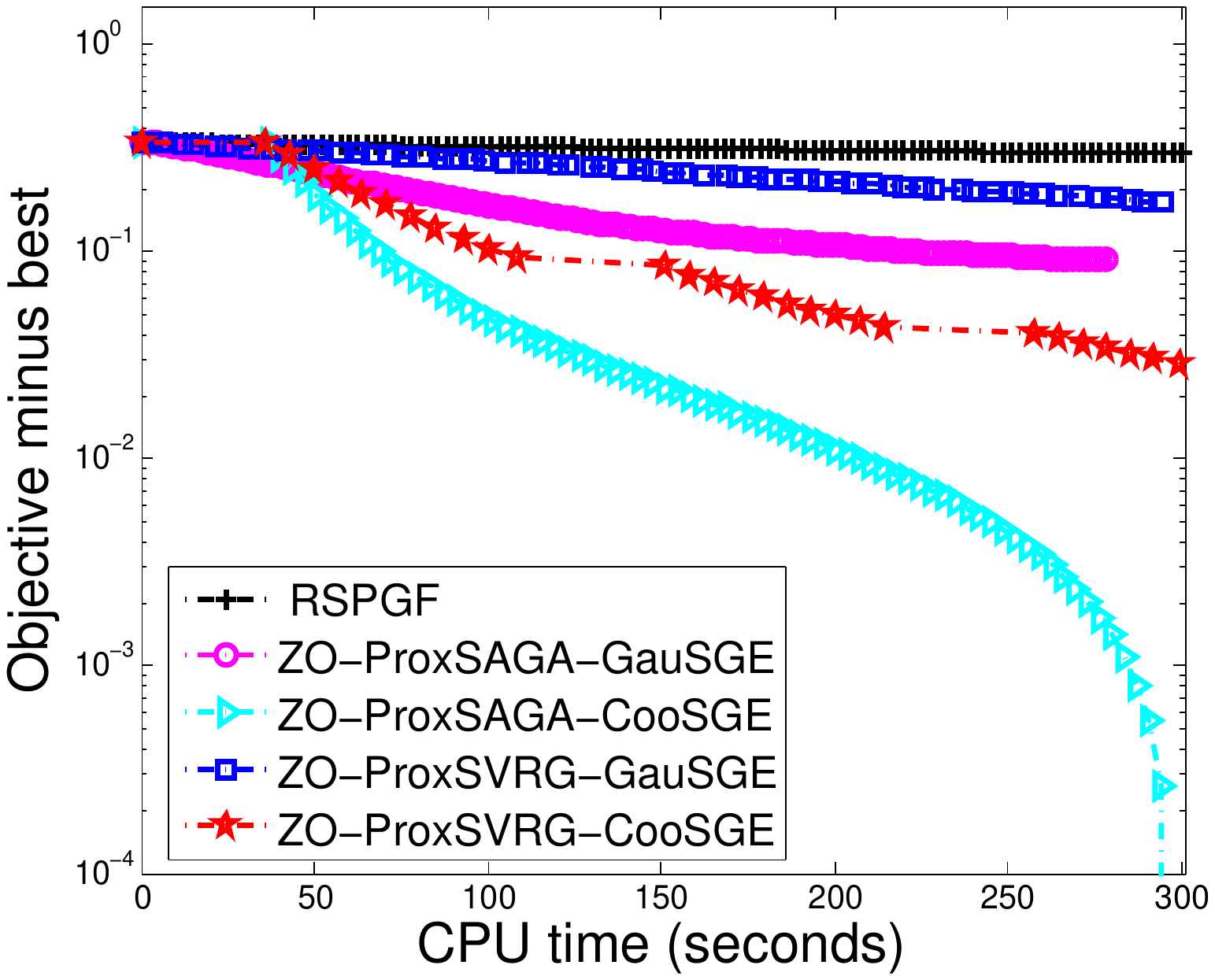}}
\subfigure[a9a]{\includegraphics[width=0.23\textwidth]{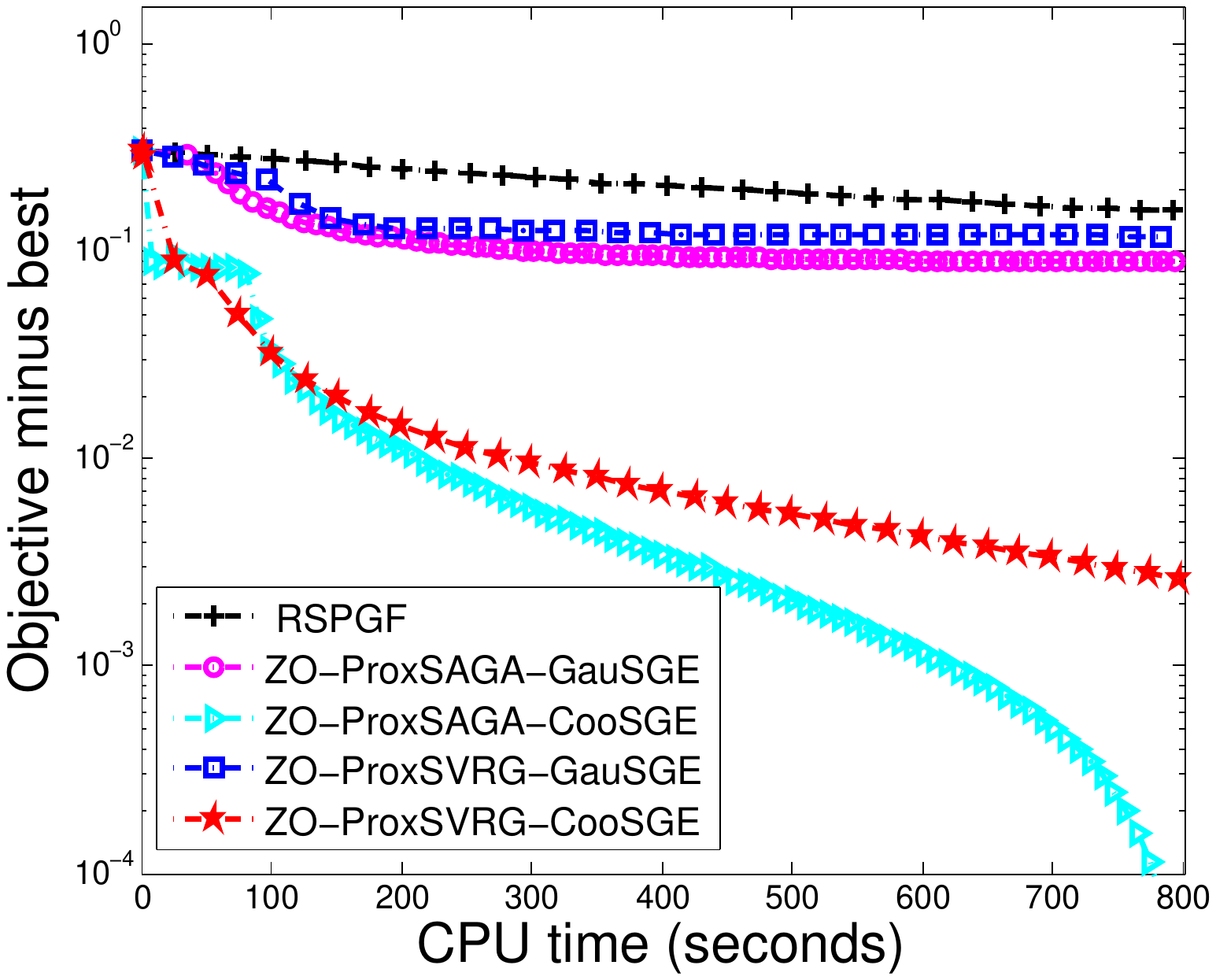}}
\subfigure[w8a]{\includegraphics[width=0.23\textwidth]{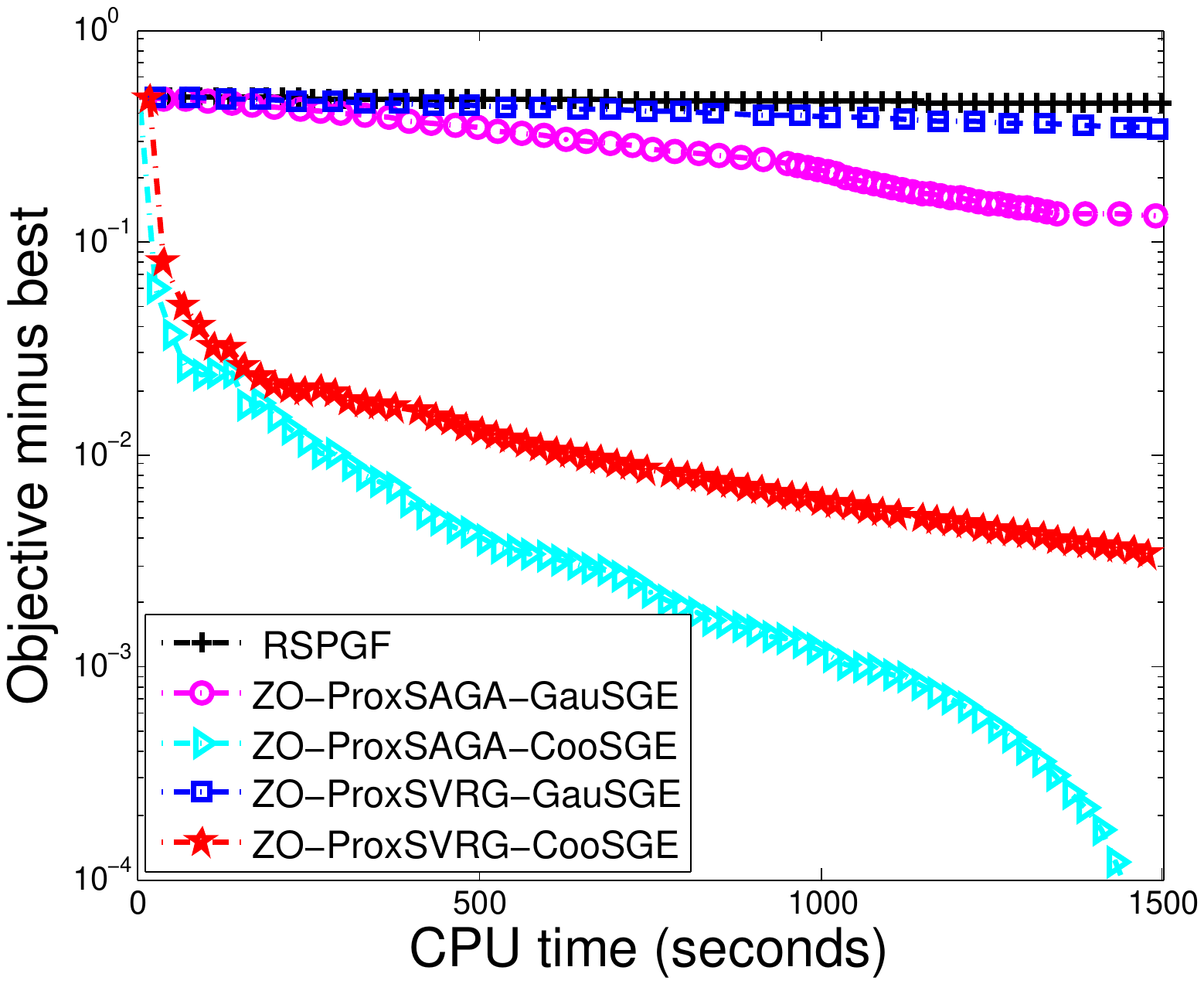}}
\subfigure[covtype.binary]{\includegraphics[width=0.23\textwidth]{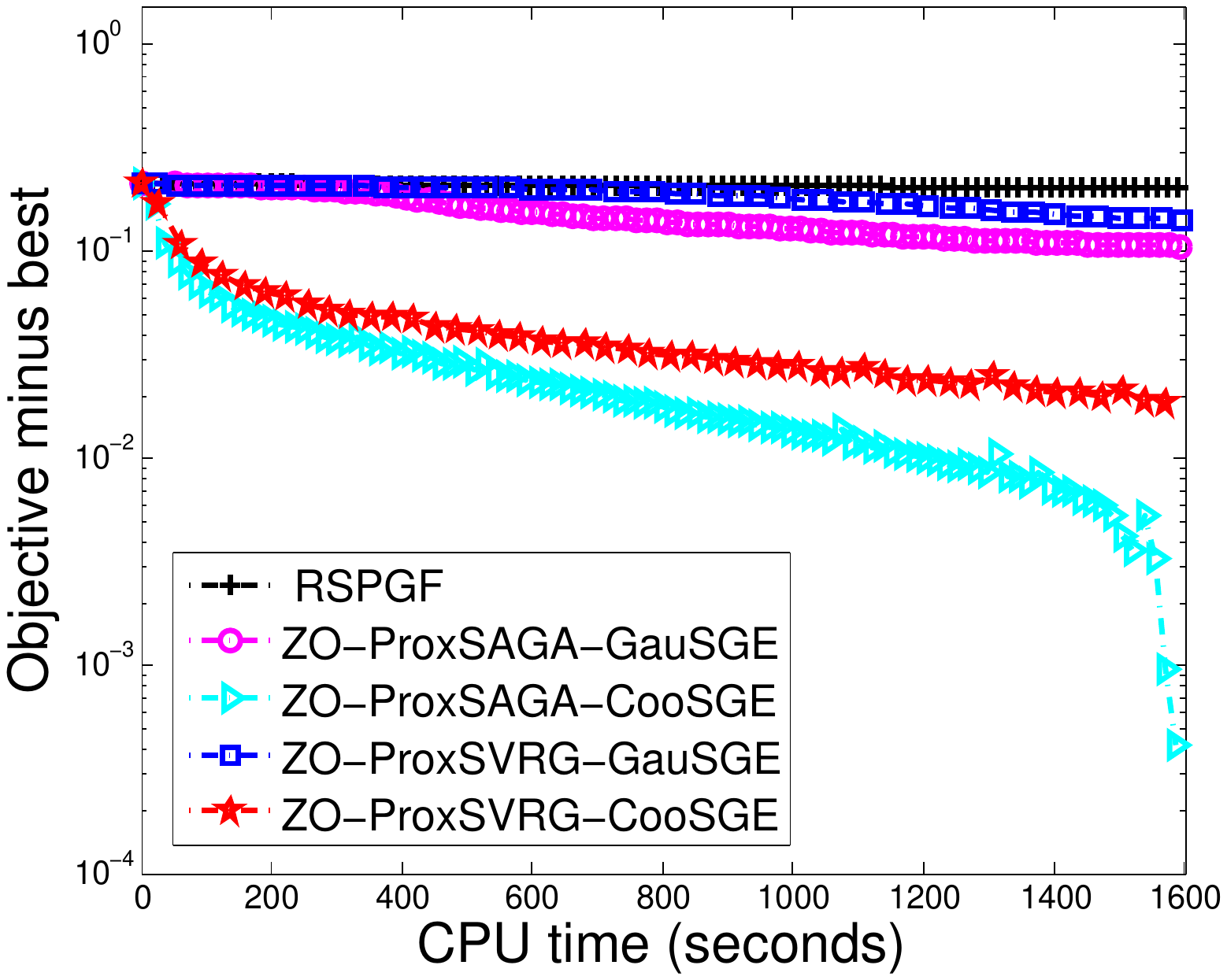}}
\caption{Objective value \emph{versus} CPU time on black-box binary classification.}
\label{fig:1}
\vspace{-1em}
\end{figure*}

\begin{figure*}[htbp]
\centering
\subfigure[20news]{\includegraphics[width=0.23\textwidth]{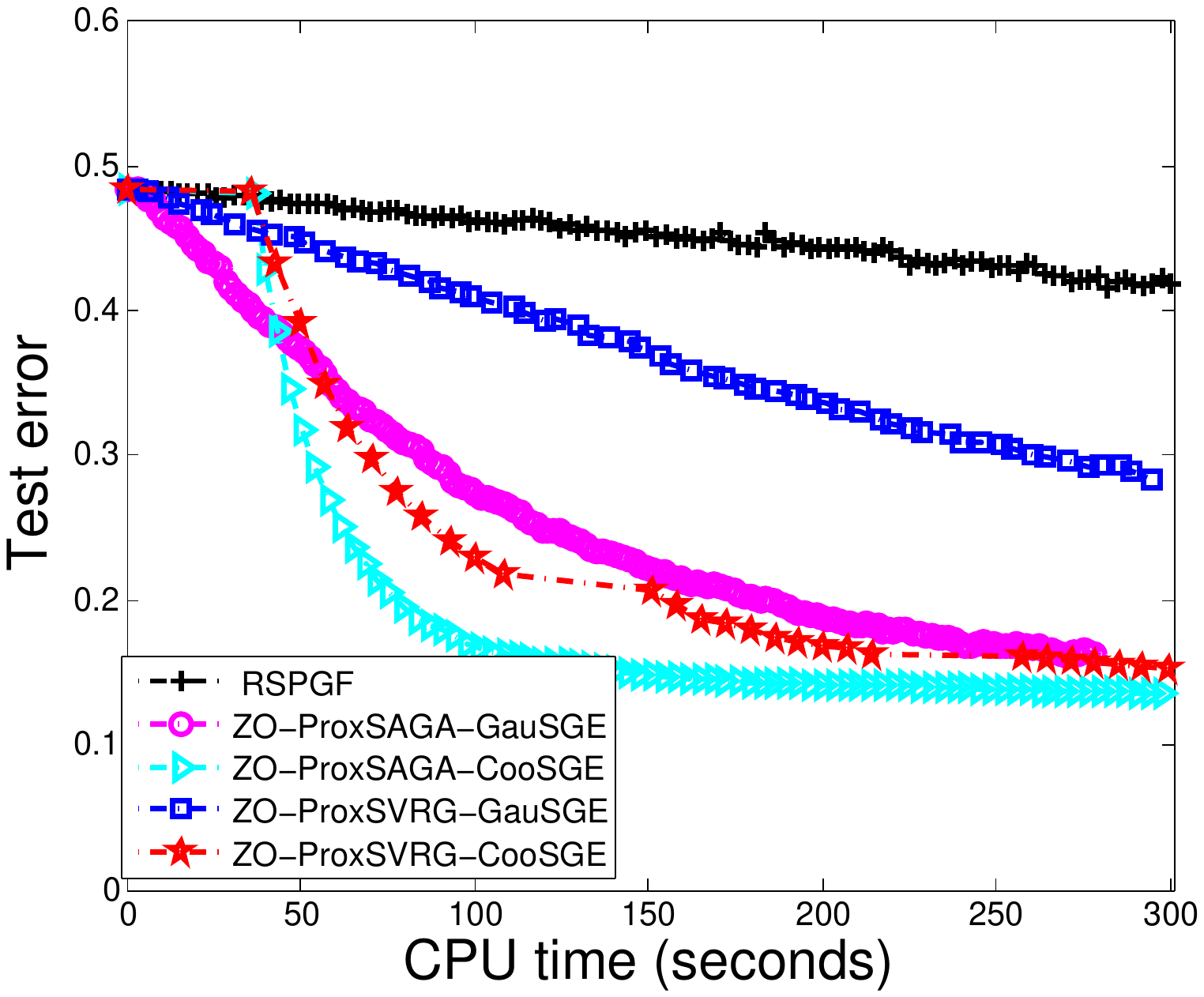}}
\subfigure[a9a]{\includegraphics[width=0.23\textwidth]{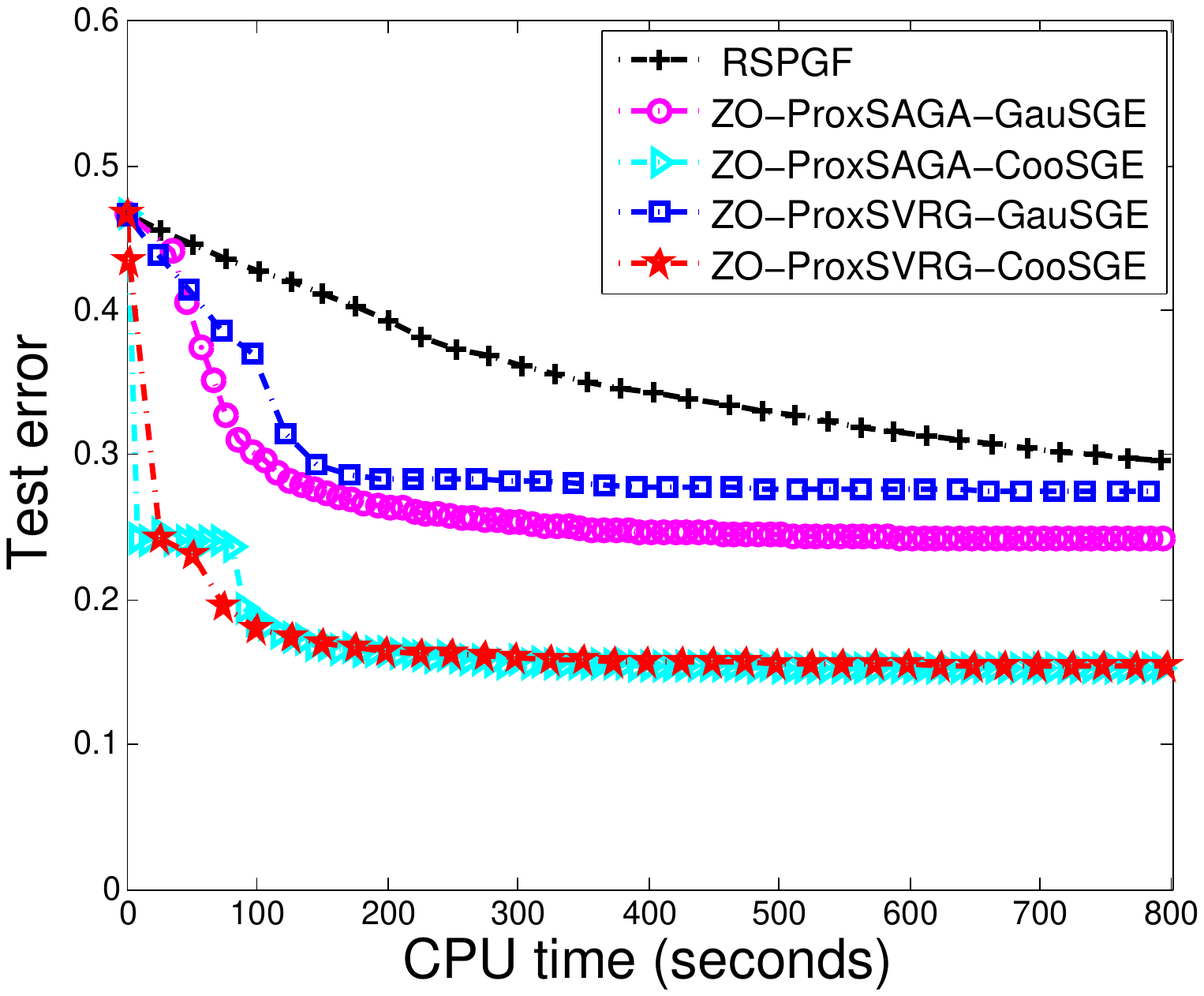}}
\subfigure[w8a]{\includegraphics[width=0.23\textwidth]{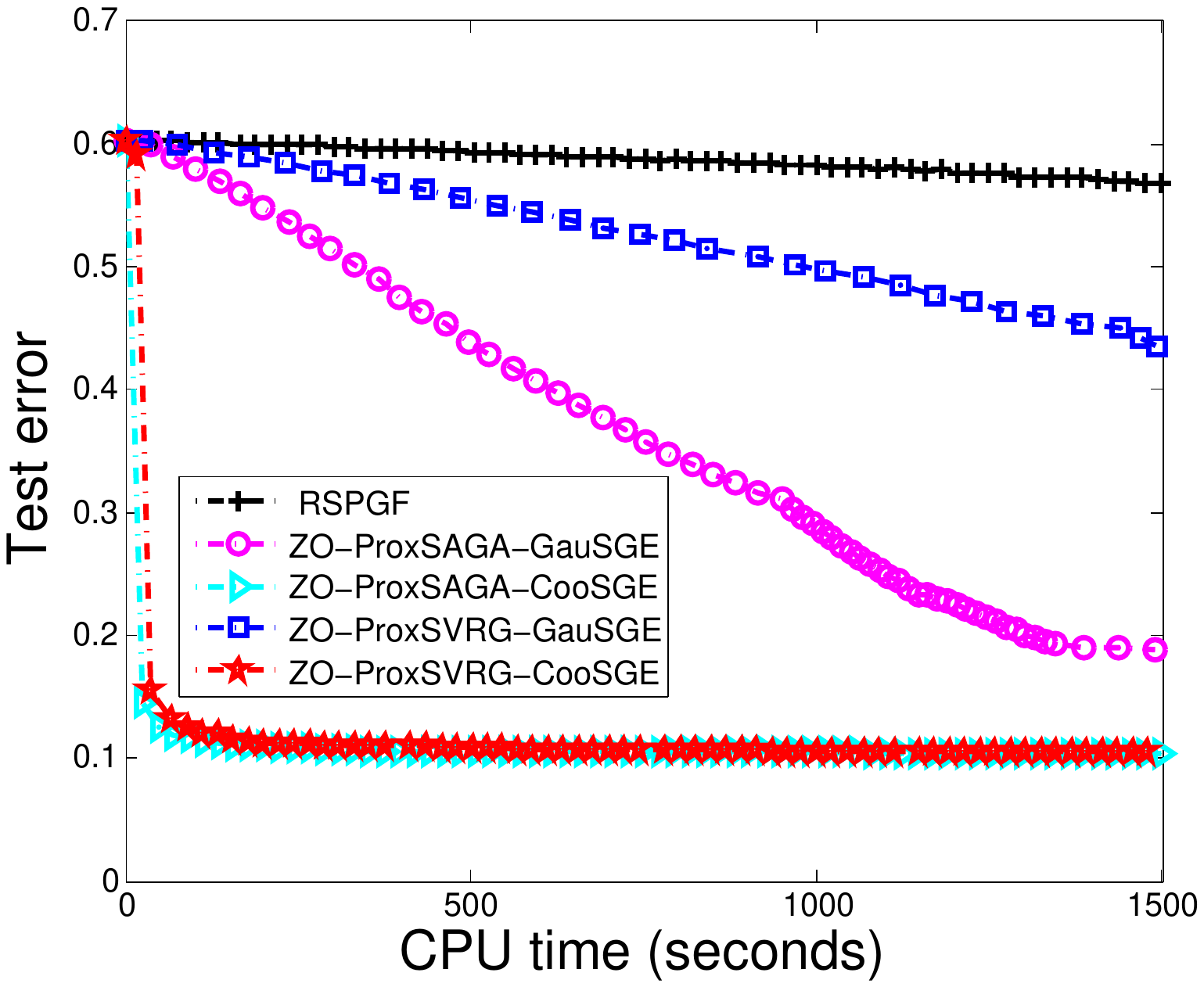}}
\subfigure[covtype.binary]{\includegraphics[width=0.23\textwidth]{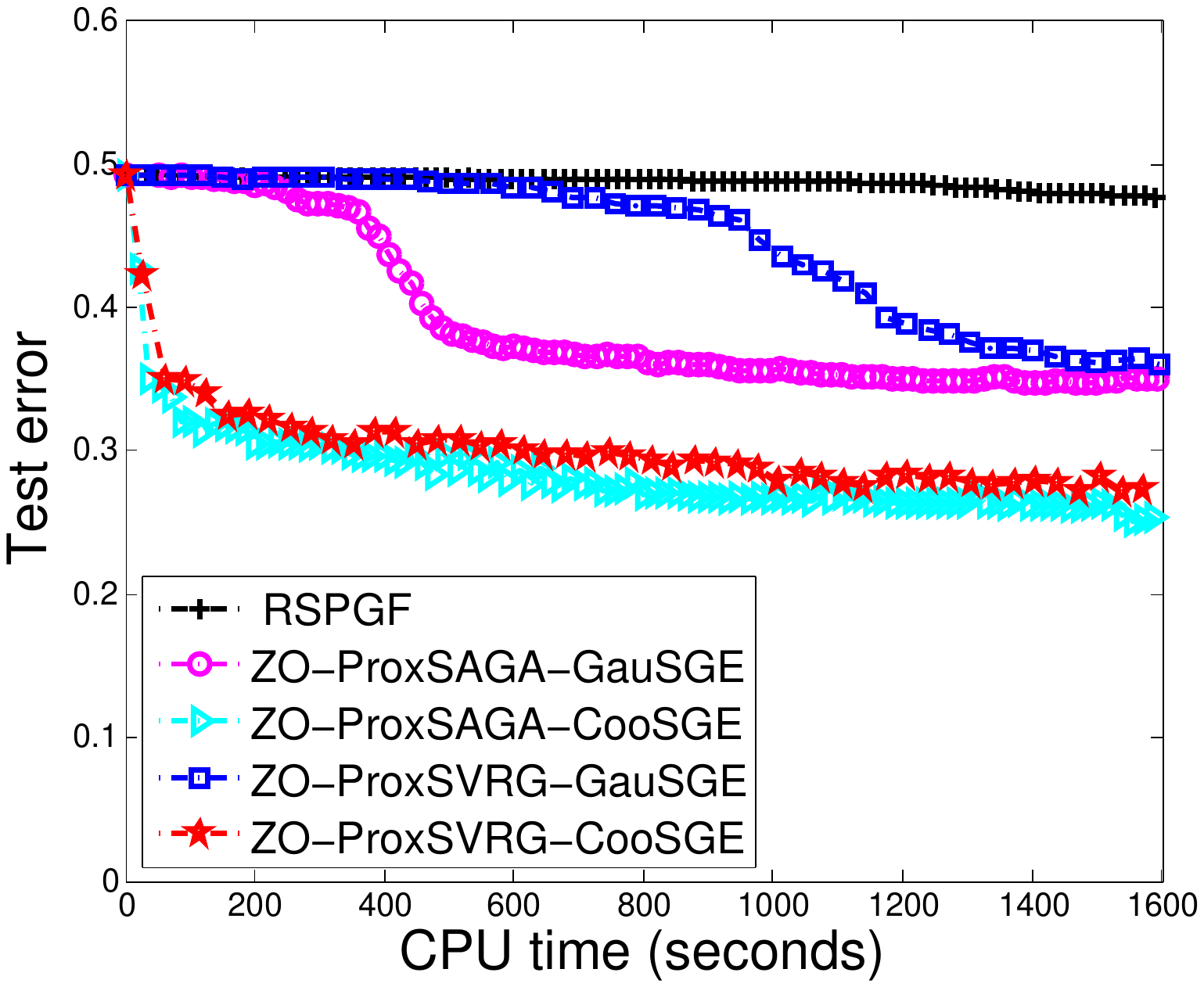}}
\caption{Test loss \emph{versus} CPU time on black-box binary classification.}
\label{fig:2}
\vspace{-2em}
\end{figure*}

\subsubsection{Experimental Results}
Figures \ref{fig:1} and \ref{fig:2} show that both objective values
and test losses of the proposed methods faster decrease than the
RSPGF method, as the time increases. In particular, both the
ZO-ProxSVRG and ZO-ProxSAGA using the CooSGE show the better
performances than the counterparts using the GauSGE. From these
results, we find that the CooSGE shows the better performances than
the CauSGE in estimating gradients. Moreover, these results also
demonstrate that both the ZO-ProxSVRG and ZO-ProxSAGA using the
CooSGE have a relatively faster convergence rate than the
counterparts using the GauSGE. Since the ZO-ProxSAGA has less
function query complexity than the ZO-ProxSVRG, it shows the better
performances than the ZO-ProxSVRG. For example, the
ZO-ProxSVRG-CooSGE needs $O(ndS+bdT)$ function queries, while ZO-SAGA-CooSGE needs $O(bdT)$ function queries.

\subsection{Adversarial Attacks on Black-Box DNNs }
In this experiment, we apply our methods to generate adversarial examples to attack a pre-trained neural network model.
Following \citep{chen2017zoo,liu2018zeroth}, the parameters of given model are hidden from us
and only its outputs are accessible.
In this case, we can not compute the gradients by using back-propagation algorithm.
Thus, we use the zeroth-order algorithms to find an universal adversarial perturbation $x \in \mathbb{R}^d$
that could fool the samples $\{a_i\in \mathbb{R}^d , \ l_i \in \mathbb{N} \}_{i=1}^n$,
which can be specified as the following elastic-net attacks to black-box DNNs problem:
\begin{align} \label{eq:A27}
\min_{x \in \mathbb{R}^d} & \frac{1}{n}\sum\limits_{i=1}^n \max\big\{F_{l_i}(a_i+x) - \max\limits_{j\neq l_i} F_j(a_{i}+x), 0 \big\} \nonumber \\
&+ \lambda_1\left\|x \right\|_1 + \lambda_2 \left\|x \right\|_2^2,
\end{align}
where $\lambda_1$ and $\lambda_2$ are nonnegative parameters to balance attack success rate, distortion and sparsity.
Here $F(a) = [F_1(a),\cdots,F_K(a)] \in [0,1]^{K}$ represents the final layer output of neural network,
which is the probabilities of $K$ classes.

Following \citep{liu2018zeroth},  we use a pre-trained DNN\footnote{https://github.com/carlini/nn$\_$robust$\_$attacks.}
on the MNIST dataset as the target black-box model, which achieves 99.4$\%$ test accuracy.
In the experiment, we select $n=10$ examples from the same class, and set the batch size $b=5$ and
a constant step size $\eta = 1/d$ for the zeroth-order algorithms, where $d=28\times 28$.
In addition, we set $\lambda_1=10^{-3}$ and $\lambda_2=1$ in the experiment.

Figure \ref{fig:3} shows that both objective values and black-box attack losses (\emph{i.e.} the first part of the problem \eqref{eq:A27}) of
the proposed algorithms faster decrease than the RSPGF method, as the number of iteration increases.
Here, we add the ZO-ProxSGD-CooSGE method for comparison, which is obtained by combining the ZO-ProxSGD method with
the CooSGE.
Interestingly, the ZO-ProxSGD-CooSGE shows better performance than both the ZO-ProxSVRG-GauSGE and ZO-ProxSAGA-GauSGE,
which further demonstrates that the CooSGE can have better performance than the CauSGE in estimating gradient.
Although having a relatively good performance in generating the adversarial samples, the ZO-ProxSGD
still shows worse performance than both the ZO-ProxSVRG-CooSGE and ZO-ProxSAGA-CooSGE,
due to not using the VR technique.

\begin{figure}[htbp]
\centering
\hspace*{-16pt}\subfigure[\emph{Objective function value}]{\includegraphics[width=0.24\textwidth]{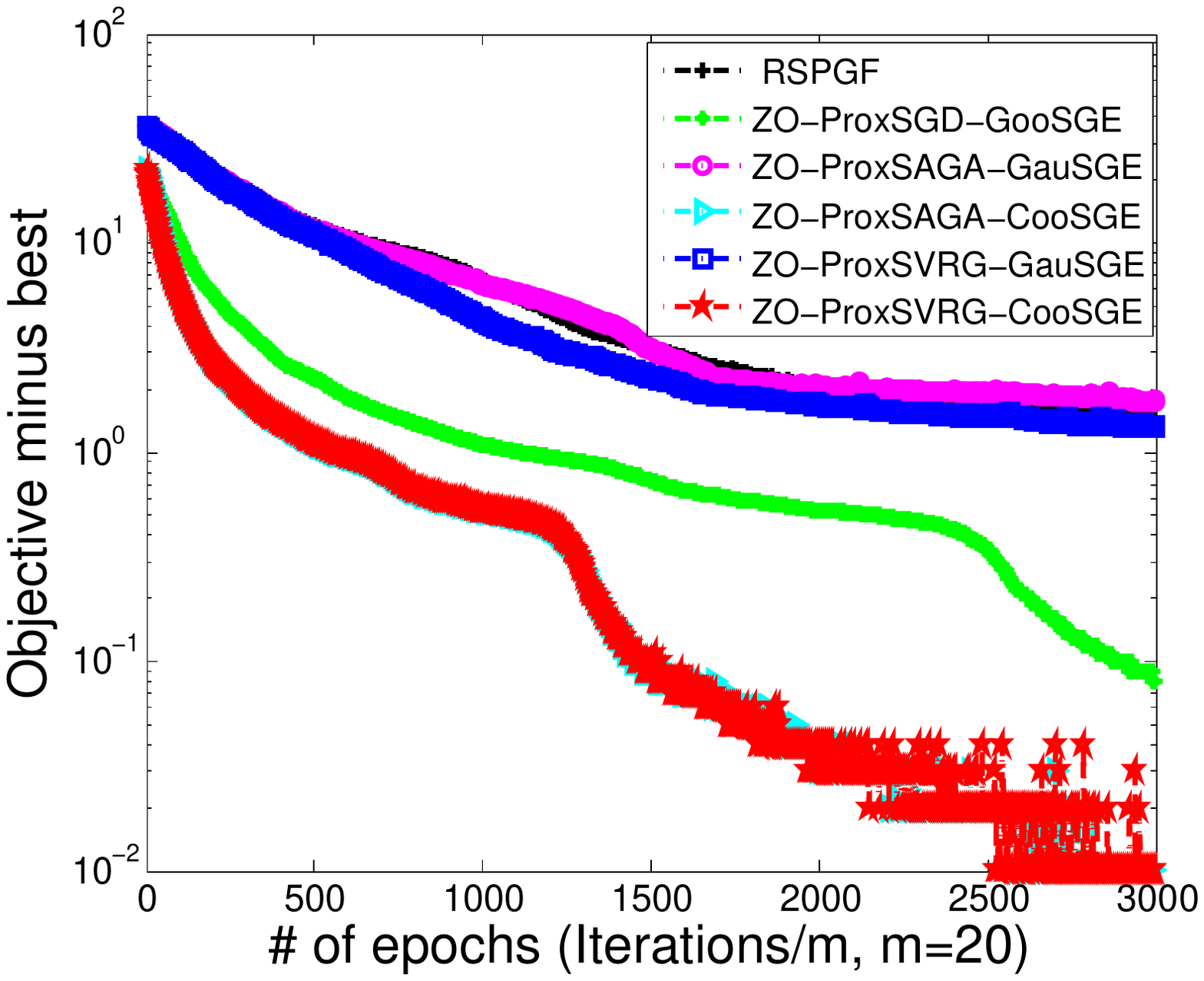}}
\subfigure[\emph{Black-box attack loss}]{\includegraphics[width=0.24\textwidth]{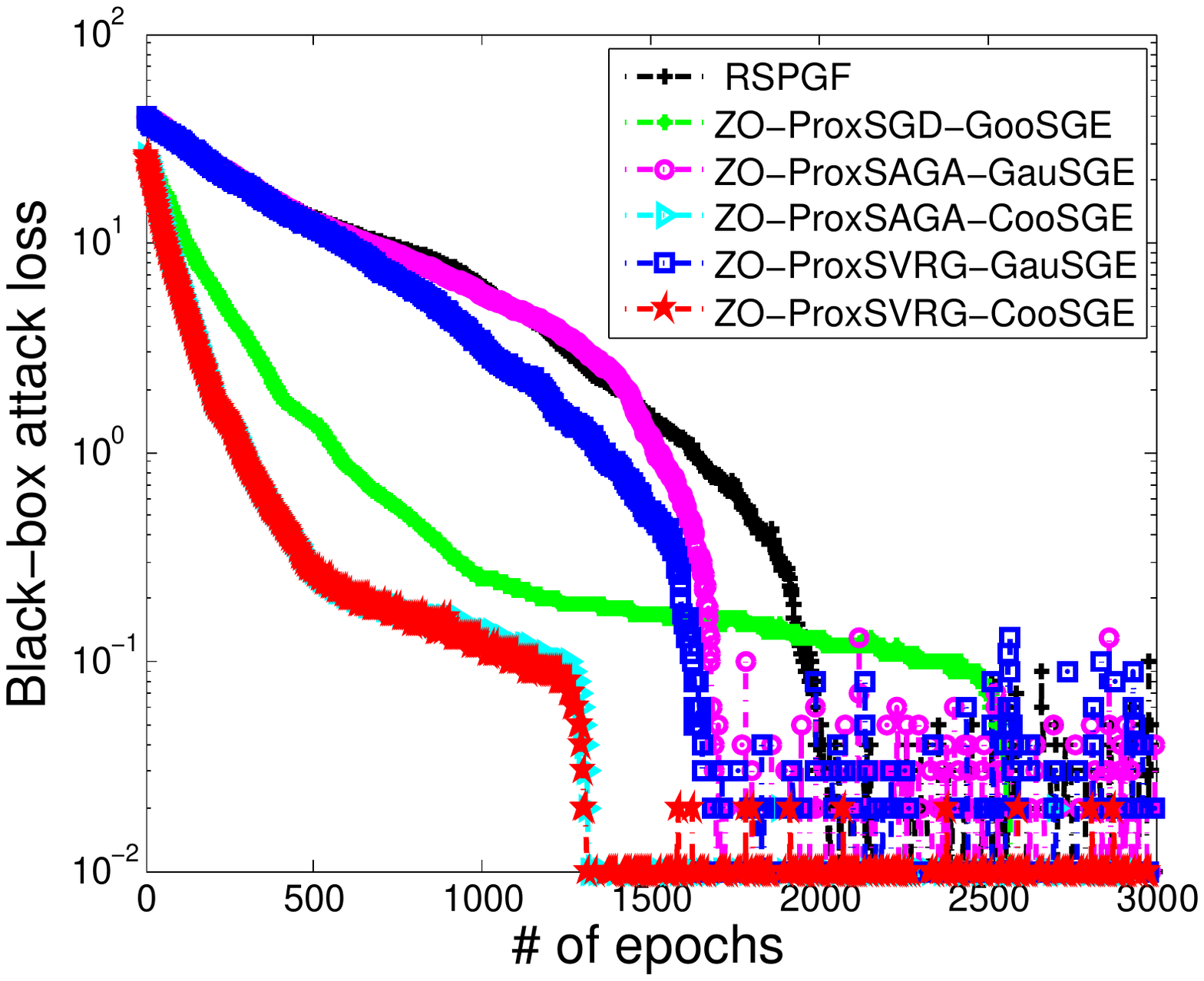}}
\caption{Objective value and attack loss on generating adversarial samples from black-box DNNs. }
\label{fig:3}
\vspace{-2em}
\end{figure}

\section{Conclusions}

In this paper, we proposed a class of faster gradient-free proximal stochastic methods
based on the zeroth-order gradient estimators, \emph{i.e.}, the GauSGE and the CooSGE,
which only use the objective function values in the optimization.
Moreover, we provided the theoretical analysis on the convergence properties of the proposed algorithms (ZO-ProxSVRG and ZO-ProxSAGA)
based on the CooSGE and the GauSGE, respectively.
In particular, both the ZO-ProxSVRG and ZO-ProxSAGA using the CooSGE have relatively faster convergence rates than
the counterparts using the GauSGE,
since the CooSGE has better performance than the CauSGE in estimating gradients.

\section{Acknowledgments}

F. Huang and S. Chen were partially supported by the Natural Science Foundation of China (NSFC) under Grant No. 61806093 and No. 61682281,
and the Key Program of NSFC under Grant No. 61732006, and Jiangsu Postdoctoral Research Grant Program No. 2018K004A.
F. Huang, Z. Huo, H. Huang were partially supported by U.S. NSF IIS 1836945,  IIS 1836938,  DBI 1836866,  IIS 1845666,  IIS 1852606,  IIS 1838627, IIS 1837956.

\bibliographystyle{aaai}
\bibliography{reference}

\begin{onecolumn}
%\appendices

\begin{appendices}

\section{Supplementary Materials for ``Faster Gradient-Free Proximal Stochastic Methods for Nonconvex Nonsmooth Optimization"}

In this section, we provide the detailed proofs of the above lemmas and theorems.
First, we give some useful properties of the CooSGE and the GauSGE, respectively.

\begin{lemma} \citep{liu2018zeroth}  \label{lem:5}
 Assume that the function $f(x)$ is $L$-smooth. Let $\hat{\nabla} f(x)$ denote the estimated gradient defined by the \textbf{CooSGE}.
 Define $f_{\mu_{j}} = \mathbb{E}_{u\sim
 U[-\mu_{j},\mu_{j}]}f(x+ue_{j})$, where
 $U[-\mu_{j},\mu_{j}]$ denotes the uniform distribution at the
 interval $[-\mu_{j},\mu_{j}]$. Then we have
\begin{itemize}
\item[1)] $f_{\mu_{j}}$ is $L$-smooth, and
\begin{align} \label{eq:29}
 \hat{\nabla} f(x) = \sum_{j=1}^d \frac{\partial f_{\mu_j}(x)}{\partial
 x_j} e_j,
\end{align}
where $\partial
f/\partial x_j$ denotes the partial derivative with respect to the
$j$th coordinate.

\item[2)] For $j\in [d]$,
\begin{align}
 & |f_{\mu_j}(x)-f(x)| \leq \frac{L\mu_j^2}{2}, \\
 & |\frac{\partial f_{\mu_j}(x)}{\partial x_j}| \leq
 \frac{L\mu_j^2}{2}.
\end{align}

\item[3)] If $\mu=\mu_j$ for $j\in [d]$, then
\begin{align} \label{eq:32}
 \|\hat{\nabla} f(x) - \nabla f(x)\|^2_2 \leq \frac{L^2d^2\mu^2}{4}.
\end{align}

\end{itemize}
\end{lemma}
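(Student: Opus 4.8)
The plan is to realize $f_{\mu_j}$ as a one–dimensional average of $f$ along the coordinate direction $e_j$ and then carry out three essentially independent elementary computations, one for each item. The cornerstone is item 1): once we show that $\partial f_{\mu_j}/\partial x_j$ is \emph{exactly} the $j$-th central difference appearing in \eqref{eq:5}, the rest of the lemma reduces to measuring how far the smoothed object $f_{\mu_j}$ (resp.\ its $j$-th partial) sits from $f$ (resp.\ its $j$-th partial), which is a textbook use of $L$-smoothness.

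First I would write $f_{\mu_j}(x) = \frac{1}{2\mu_j}\int_{-\mu_j}^{\mu_j} f(x+u e_j)\,\mathrm{d}u$. Since $f$ is $L$-smooth, $\nabla f$ is continuous and bounded on the compact segment $\{x+u e_j:\ |u|\le \mu_j\}$, so differentiation under the integral sign is justified and $\nabla f_{\mu_j}(x) = \frac{1}{2\mu_j}\int_{-\mu_j}^{\mu_j}\nabla f(x+u e_j)\,\mathrm{d}u$; applying the triangle inequality inside the integral together with $\|\nabla f(a)-\nabla f(b)\|\le L\|a-b\|$ gives $\|\nabla f_{\mu_j}(x)-\nabla f_{\mu_j}(y)\|\le L\|x-y\|$, i.e.\ $f_{\mu_j}$ is $L$-smooth. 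For the identity \eqref{eq:29}, I would observe that $\partial_j f(x+u e_j) = \frac{\mathrm{d}}{\mathrm{d}u}f(x+u e_j)$, so by the fundamental theorem of calculus $\partial_j f_{\mu_j}(x) = \frac{1}{2\mu_j}\big(f(x+\mu_j e_j)-f(x-\mu_j e_j)\big)$, which is precisely the $j$-th component of the CooSGE in \eqref{eq:5}; the remaining partials of $f_{\mu_j}$ are irrelevant since $\hat{\nabla} f(x)$ only collects the $j$-th one per coordinate, whence $\hat{\nabla} f(x)=\sum_{j=1}^d \partial_j f_{\mu_j}(x)\,e_j$.

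For item 2) I would invoke the quadratic bounds implied by $L$-smoothness: $|f(x+u e_j) - f(x) - u\,\partial_j f(x)|\le \tfrac{L}{2}u^2$, and $|\partial_j f(x+u e_j)-\partial_j f(x)|\le \|\nabla f(x+u e_j)-\nabla f(x)\|\le L|u|$. Averaging the first over $u\in[-\mu_j,\mu_j]$, the odd term $u\,\partial_j f(x)$ integrates to zero by symmetry of the interval, leaving $|f_{\mu_j}(x)-f(x)| \le \frac{1}{2\mu_j}\int_{-\mu_j}^{\mu_j}\tfrac{L}{2}u^2\,\mathrm{d}u = \tfrac{L\mu_j^2}{6}\le \tfrac{L\mu_j^2}{2}$; averaging the second, using the central-difference representation from item 1, gives $|\partial_j f_{\mu_j}(x)-\partial_j f(x)| \le \frac{1}{2\mu_j}\int_{-\mu_j}^{\mu_j} L|u|\,\mathrm{d}u = \tfrac{L\mu_j}{2}$. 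Item 3) is then immediate: with $\mu=\mu_j$ for all $j$, $\|\hat{\nabla} f(x)-\nabla f(x)\|_2 = \big\|\sum_{j}(\partial_j f_{\mu_j}(x)-\partial_j f(x))e_j\big\|_2 \le \sum_{j=1}^d |\partial_j f_{\mu_j}(x)-\partial_j f(x)| \le d\cdot\tfrac{L\mu}{2}$, and squaring yields \eqref{eq:32}.

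There is no deep obstacle; the content is a sequence of one–dimensional estimates. The only place a naive argument fails is the first inequality of item 2): one cannot simply bound $|f_{\mu_j}(x)-f(x)|\le \frac{1}{2\mu_j}\int |f(x+u e_j)-f(x)|\,\mathrm{d}u$, because without a bounded-gradient hypothesis that integrand is only $O(|u|)$ and would give an $O(\mu_j)$ bound rather than the required $O(\mu_j^2)$; the $\mu_j^2$ rate genuinely relies on the cancellation of the first-order term, i.e.\ on the \emph{symmetry of the uniform smoothing distribution}. A minor technical point worth stating carefully is the justification for differentiating under the integral sign, which follows from continuity of $\nabla f$ (a consequence of $L$-smoothness) together with compactness of the integration interval.
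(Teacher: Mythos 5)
Your proof is correct, and it is worth noting that the paper itself offers no proof of this lemma: it is imported verbatim from \citet{liu2018zeroth}, so your argument is a self-contained reconstruction rather than an alternative to anything in the text. The route you take --- writing $f_{\mu_j}$ as a one-dimensional uniform average, differentiating under the integral to get $L$-smoothness, and using the fundamental theorem of calculus to identify $\partial_j f_{\mu_j}(x)$ with the central difference $\frac{f(x+\mu_j e_j)-f(x-\mu_j e_j)}{2\mu_j}$ --- is exactly the standard derivation in the cited reference, and your observation that the $O(\mu_j^2)$ rate in item 2) hinges on the symmetry of the smoothing interval (cancellation of the first-order term) is the genuine content of that step.

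One point you handle correctly but should state explicitly: the second displayed inequality of item 2) as printed, $|\partial f_{\mu_j}(x)/\partial x_j|\leq L\mu_j^2/2$, is false in general (the smoothed partial approximates $\partial_j f(x)$, which need not be small); it is a typo inherited from the source, and the intended statement is $|\partial f_{\mu_j}(x)/\partial x_j - \partial f(x)/\partial x_j|\leq L\mu_j/2$, which is precisely what you prove and precisely what item 3) consumes. Your final step for item 3) uses the $\ell_1$--$\ell_2$ bound $\|\cdot\|_2\leq\sum_j|\cdot_j|$ to reach $d^2$ in the square; summing the squares coordinatewise would give the sharper constant $L^2 d\mu^2/4$, but the looser bound matches the lemma as stated, so nothing is lost.
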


\begin{lemma} \label{lem:6}
Assume that the function $f(x)$ is $L$-smooth. Let $\hat{\nabla} f(x)$ denote the estimated gradient defined by the \textbf{GauSGE}.
Define $f_{\mu}(x) = \mathbb{E}_{u\sim \mathcal{N}(0,I)}[f(x+\mu u)]$. Then we have
\begin{itemize}
\item[1)] For any $x\in \mathbb{R}^d$,  $\nabla f_{\mu}(x) = \mathbb{E}_u[\hat{\nabla} f(x)]$.
\item[2)] For any $x\in \mathbb{R}^d$,
\begin{align}
 & |f_{\mu}(x)-f(x)| \leq \frac{Ld\mu^2}{2}, \nonumber \\
 & |\nabla f_{\mu}(x)- \nabla f(x)| \leq \frac{L\mu(d+3)^{\frac{3}{2}}}{2}, \nonumber \\
 & \mathbb{E}_u\|\hat{\nabla} f(x) \|^2 \leq 2(d+4)\|\nabla f(x)\|^2 + \frac{\mu^2L^2(d+6)^3}{2}. \label{eq:33}
\end{align}
\item[3)] For any $x\in \mathbb{R}^d$,
\begin{align}  \label{eq:34}
 \mathbb{E}_u \|\hat{\nabla} f(x) - \nabla f(x)\|^2 \leq 2(2d+9)\|\nabla f(x)\|^2 + \mu^2L^2(d+6)^3.
\end{align}
\end{itemize}
\end{lemma}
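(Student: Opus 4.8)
The plan is to obtain every claim from the convolution (Gaussian-smoothing) representation of $f_{\mu}$ together with a handful of standard moment estimates for $u\sim\mathcal{N}(0,I)$. Writing $f_{\mu}(x)=(2\pi)^{-d/2}\int f(x+\mu u)e^{-\|u\|^2/2}du$ and substituting $y=x+\mu u$ gives $f_{\mu}(x)=(2\pi\mu^2)^{-d/2}\int f(y)e^{-\|y-x\|^2/(2\mu^2)}dy$; differentiating under the integral sign (legitimate since $L$-smoothness forces at most quadratic growth of $f$ and linear growth of $\nabla f$, so dominated convergence applies) yields $\nabla f_{\mu}(x)=\mathbb{E}_u[\mu^{-1}f(x+\mu u)u]$. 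Because $\mathbb{E}_u[u]=0$ we may subtract $\mu^{-1}f(x)u$ inside the expectation for free, which gives $\nabla f_{\mu}(x)=\mathbb{E}_u[\hat{\nabla}f(x)]$, i.e. item 1).

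For item 2) the basic tool is the pointwise (in $u$) consequence of $L$-smoothness, $|f(x+\mu u)-f(x)-\mu\langle\nabla f(x),u\rangle|\le\frac{L\mu^2}{2}\|u\|^2$, used before taking expectations. (i) For $|f_{\mu}(x)-f(x)|$, take expectations and use $\mathbb{E}_u[u]=0$ and $\mathbb{E}_u\|u\|^2=d$. (ii) For $\|\nabla f_{\mu}(x)-\nabla f(x)\|$, note $\mathbb{E}_u[uu^T]=I$ so $\mathbb{E}_u[\langle\nabla f(x),u\rangle u]=\nabla f(x)$; subtracting this from item 1) gives $\nabla f_{\mu}(x)-\nabla f(x)=\mathbb{E}_u[\mu^{-1}(f(x+\mu u)-f(x)-\mu\langle\nabla f(x),u\rangle)u]$, and Jensen plus the Taylor bound bound its norm by $\frac{L\mu}{2}\mathbb{E}_u\|u\|^3\le\frac{L\mu}{2}(d+3)^{3/2}$. (iii) For $\mathbb{E}_u\|\hat{\nabla}f(x)\|^2$, decompose $\hat{\nabla}f(x)=\langle\nabla f(x),u\rangle u+r(u)u$ with $r(u)=\mu^{-1}(f(x+\mu u)-f(x)-\mu\langle\nabla f(x),u\rangle)$, apply $\|a+b\|^2\le2\|a\|^2+2\|b\|^2$, and use $\mathbb{E}_u[\langle g,u\rangle^2\|u\|^2]\le(d+4)\|g\|^2$ together with $|r(u)|\le\frac{L\mu}{2}\|u\|^2$ and $\mathbb{E}_u\|u\|^6\le(d+6)^3$.

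Item 3) uses the same splitting, now centered at the true gradient: $\hat{\nabla}f(x)-\nabla f(x)=(\langle\nabla f(x),u\rangle u-\nabla f(x))+r(u)u$. With $\|a+b\|^2\le2\|a\|^2+2\|b\|^2$, the second term contributes $\le\frac{L^2\mu^2}{2}\mathbb{E}_u\|u\|^6\le\frac{L^2\mu^2}{2}(d+6)^3$, while for the first term $\mathbb{E}_u[\langle g,u\rangle u]=g$ gives $\mathbb{E}_u\|\langle g,u\rangle u-g\|^2=\mathbb{E}_u[\langle g,u\rangle^2\|u\|^2]-\|g\|^2\le(d+3)\|g\|^2$; collecting terms and relaxing the constants to the stated (looser) form yields $\mathbb{E}_u\|\hat{\nabla}f(x)-\nabla f(x)\|^2\le2(2d+9)\|\nabla f(x)\|^2+\mu^2L^2(d+6)^3$.

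The part I expect to be the real work is not any of the inequality chains above but the two Gaussian moment facts they rest on: $\mathbb{E}_u\|u\|^p\le(d+p)^{p/2}$ for integer $p\ge2$ and $\mathbb{E}_u[\langle g,u\rangle^2\|u\|^2]\le(d+4)\|g\|^2$. These are exactly the estimates established by \citet{Nesterov2017RandomGM}, so I would either cite them directly or re-derive the first from $\|u\|^2\sim\chi^2_d$ (e.g. $\mathbb{E}\|u\|^6=d(d+2)(d+4)$ and $\mathbb{E}\|u\|^3=2^{3/2}\Gamma(\tfrac{d+3}{2})/\Gamma(\tfrac{d}{2})$, both bounded as claimed) and the second by expanding coordinatewise with $\mathbb{E}[u_i^4]=3$, $\mathbb{E}[u_i^2u_j^2]=1$ (giving the exact value $(d+2)\|g\|^2$). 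Everything else is constant bookkeeping — in particular the factor $\tfrac12$ that appears naturally in front of $\mu^2L^2(d+6)^3$ in 2)(iii) and in 3) is simply absorbed into the looser stated bounds.
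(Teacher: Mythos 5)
Your proof is correct, but it takes a genuinely different route from the paper's, which is far terser. For parts 1) and 2) the paper simply invokes Lemma 5 of \citet{ghadimi2016mini} (the Nesterov--Spokoiny estimates), whereas you re-derive them from the convolution representation of $f_\mu$ and the Gaussian moment facts $\mathbb{E}_u\|u\|^p\le(d+p)^{p/2}$ and $\mathbb{E}_u[\langle g,u\rangle^2\|u\|^2]=(d+2)\|g\|^2$; your derivation is self-contained but ends up reproving the cited lemma, and you correctly identify those two moment bounds as the only nontrivial ingredients. The real divergence is in part 3): the paper's entire argument there is the one-line estimate $\|\hat{\nabla} f(x)-\nabla f(x)\|^2\le 2\|\hat{\nabla} f(x)\|^2+2\|\nabla f(x)\|^2$ followed by substituting the second-moment bound \eqref{eq:33}, which yields exactly $2(2d+9)\|\nabla f(x)\|^2+\mu^2L^2(d+6)^3$. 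You instead center the decomposition at the true gradient, $\hat{\nabla} f(x)-\nabla f(x)=(\langle\nabla f(x),u\rangle u-\nabla f(x))+r(u)u$, and compute moments of each piece directly; this buys strictly sharper constants (roughly $2(d+3)$ in place of $2(2d+9)$, and $\tfrac12(d+6)^3$ in place of $(d+6)^3$) which you then deliberately relax to the stated form. The paper's route is shorter and reuses \eqref{eq:33} as a black box; yours shows the stated constants are loose, which is harmless downstream since the lemma is only used up to constants. All of your intermediate claims check out, including the relaxation step at the end.
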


\begin{proof}
The first and second parts of the above results can be obtain from Lemma 5 in \citep{ghadimi2016mini}.
Using the inequality \eqref{eq:33}, we have
\begin{align}
 \|\hat{\nabla} f(x)-\nabla f(x)\|^2 & \leq 2\|\hat{\nabla} f(x)\|^2 + 2 \|\nabla f(x)\|^2 \nonumber \\
 & \leq 2(2d+9)\|\nabla f(x)\|^2 + \mu^2L^2(d+6)^3, \nonumber
\end{align}
where the first inequality holds by the Cauchy-Schwarz and Young's  inequality.
\end{proof}

\textbf{Notations:} To make the paper easier to follow, we give
the following notations:
\begin{itemize}
\item $\|\cdot\|$ denotes the vector $\ell_2$ norm and the matrix spectral norm, respectively.
\item $\mu$ denotes the smooth parameter of the gradient estimators (\emph{i.e.}, the CooSGE and GauSGE ).
\item $\eta$ denotes the step size of updating variable $x$.
\item $L$ denotes the Lipschitz constant of $\nabla f(x)$.
\item $b$ denotes the mini-batch size of stochastic gradient.
\item $T$, $m$ and $S$ are the total number of iterations, the number of iterations in the inner loop, and the number
 of iterations in the outer loop, respectively.
\item For notational simplicity, $\mathbb{E}$ denotes $\mathbb{E}_{\mathcal{I}_t,u}$.
\end{itemize}

\subsection{Convergence Analysis of ZO-ProxSVRG-CooSGE}
In this section, we give the convergence analysis of the ZO-ProxSVRG-CooSGE.
First, we give an useful lemma about the upper bound of the variance of estimated gradient.

\begin{lemma} \label{lem:7}
In Algorithm \ref{alg:1} using the CooSGE, given the estimated gradient
$\hat{v}^s_t = \hat{\nabla} f_{\mathcal{I}_t}(x_{t}^{s})-\hat{\nabla} f_{\mathcal{I}_t}(\tilde{x}^s)+\hat{\nabla} f(\tilde{x}^s)$,
then the following inequality holds
\begin{align}
 \mathbb{E}\|\hat{v}^s_t-\nabla f(x^s_t)\|^2 \leq \frac{2\delta_n L^2
 d}{b}\mathbb{E} \|x^s_t-\tilde{x}^s\|^2_2 + \frac{L^2 d^2\mu^2}{2}.
\end{align}
\end{lemma}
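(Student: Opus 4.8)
The plan is to split the estimation error of the mixture gradient into a \emph{finite-sum sampling} term and a \emph{smoothing bias} term, bound each one separately using the properties of the CooSGE collected in Lemma \ref{lem:5}, and then recombine. Concretely, I would write
$\hat{v}^s_t-\nabla f(x^s_t) = \big(\hat{v}^s_t-\hat{\nabla} f(x^s_t)\big) + \big(\hat{\nabla} f(x^s_t)-\nabla f(x^s_t)\big)$
and apply the elementary inequality $\|a+b\|^2\leq 2\|a\|^2+2\|b\|^2$, so that $\mathbb{E}\|\hat{v}^s_t-\nabla f(x^s_t)\|^2 \leq 2\,\mathbb{E}\|\hat{v}^s_t-\hat{\nabla} f(x^s_t)\|^2 + 2\,\mathbb{E}\|\hat{\nabla} f(x^s_t)-\nabla f(x^s_t)\|^2$. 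Since $\hat{\nabla} f=\frac1n\sum_i\hat{\nabla} f_i$ is the CooSGE of the $L$-smooth function $f$, part 3) of Lemma \ref{lem:5} gives $\|\hat{\nabla} f(x^s_t)-\nabla f(x^s_t)\|^2 \leq \frac{L^2d^2\mu^2}{4}$, which after the factor $2$ contributes exactly the $\frac{L^2d^2\mu^2}{2}$ term of the claim.

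For the sampling term, note that $\hat{v}^s_t-\hat{\nabla} f(x^s_t) = \frac1b\sum_{i\in\mathcal{I}_t} w_i - \bar w$, where $w_i := \hat{\nabla} f_i(x^s_t)-\hat{\nabla} f_i(\tilde x^s)$ and $\bar w := \frac1n\sum_{i=1}^n w_i = \hat{\nabla} f(x^s_t)-\hat{\nabla} f(\tilde x^s)$; this is the deviation of a uniformly sampled mini-batch average of the $w_i$'s from their full average. Conditioning on $x^s_t$ and $\tilde x^s$ and using the standard bound on the variance of a mini-batch of size $b$ (the cross terms vanish because $\mathbb{E}_{\mathcal{I}_t}[\frac1b\sum_{i\in\mathcal{I}_t}w_i]=\bar w$), one obtains $\mathbb{E}_{\mathcal{I}_t}\big\|\frac1b\sum_{i\in\mathcal{I}_t}w_i-\bar w\big\|^2 \leq \frac{\delta_n}{b}\cdot\frac1n\sum_{i=1}^n\|w_i\|^2$, where $\delta_n\in[0,1]$ absorbs the sampling scheme (e.g. $\delta_n=\frac{n-b}{n-1}$ for sampling without replacement, $\delta_n=1$ with replacement).

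It remains to bound each $\|w_i\|^2$. By part 1) of Lemma \ref{lem:5}, $\hat{\nabla} f_i(x)=\sum_{j=1}^d \frac{\partial (f_i)_{\mu_j}(x)}{\partial x_j}e_j$ where each coordinate-smoothed function $(f_i)_{\mu_j}$ is $L$-smooth; hence for every $j$, $\big|\frac{\partial (f_i)_{\mu_j}(x^s_t)}{\partial x_j}-\frac{\partial (f_i)_{\mu_j}(\tilde x^s)}{\partial x_j}\big| \leq \|\nabla (f_i)_{\mu_j}(x^s_t)-\nabla (f_i)_{\mu_j}(\tilde x^s)\| \leq L\|x^s_t-\tilde x^s\|$, and summing the $d$ squared coordinates gives $\|w_i\|^2 \leq dL^2\|x^s_t-\tilde x^s\|^2$. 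Therefore $\mathbb{E}\|\hat{v}^s_t-\hat{\nabla} f(x^s_t)\|^2 \leq \frac{\delta_n L^2 d}{b}\,\mathbb{E}\|x^s_t-\tilde x^s\|^2$; multiplying by $2$ and adding the smoothing-bias term yields the stated inequality after taking total expectation.

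I expect the \textbf{main obstacle} to be the mini-batch sampling step: justifying the $\delta_n/b$ contraction carefully (in particular that, conditionally, the expectation of the batch average equals $\bar w$ so that the variance decomposition is clean), together with the subtlety that $\hat{\nabla} f_i$ is not literally the gradient of a single function but a vector of coordinate-wise derivatives of different smoothings — so the "$\sqrt{d}\,L$-Lipschitz" estimate has to be argued coordinate by coordinate from the $L$-smoothness of each $(f_i)_{\mu_j}$ rather than quoted as a standard gradient-Lipschitz fact. Everything else reduces to combining $\|a+b\|^2\leq 2\|a\|^2+2\|b\|^2$ with the already-established properties of the CooSGE in Lemma \ref{lem:5}.
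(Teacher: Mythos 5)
Your proposal is correct and follows essentially the same route as the paper: the identical decomposition of $\hat{v}^s_t-\nabla f(x^s_t)$ into the centered mini-batch deviation plus the CooSGE bias $\hat{\nabla} f(x^s_t)-\nabla f(x^s_t)$, the same $2\|a\|^2+2\|b\|^2$ split, the same $\delta_n/b$ variance contraction, and the same coordinate-wise $L$-smoothness argument giving $\|\hat{\nabla} f_i(x^s_t)-\hat{\nabla} f_i(\tilde{x}^s)\|^2\leq L^2 d\|x^s_t-\tilde{x}^s\|^2$. The only cosmetic difference is where the factor $d$ enters that last bound (you bound each partial-derivative difference by the full gradient norm, the paper applies Jensen to the sum over coordinates), and both yield the identical final inequality.
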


\begin{proof}
Since
\begin{align} \label{eq:36}
\mathbb{E}_{\mathcal{I}_t}[\hat{\nabla} f_{\mathcal{I}_t}(x_{t}^{s})-\hat{\nabla} f_{\mathcal{I}_t}(\tilde{x}^s)] =
\hat{\nabla} f(x_{t}^{s})-\hat{\nabla} f(\tilde{x}^s),
\end{align}
we have
\begin{align} \label{eq:37}
\mathbb{E}\|\hat{v}^s_t-\nabla f(x^s_t)\|^2 &= \mathbb{E}\|\hat{\nabla} f_{\mathcal{I}_t}(x_{t}^{s})-\hat{\nabla} f_{\mathcal{I}_t}(\tilde{x}^s)
+\hat{\nabla} f(\tilde{x}^s)-\nabla f(x^s_t)\|^2 \nonumber \\
& = \mathbb{E}\|\hat{\nabla} f_{\mathcal{I}_t}(x_{t}^{s})-\hat{\nabla} f_{\mathcal{I}_t}(\tilde{x}^s)+\hat{\nabla} f(\tilde{x}^s) -
\hat{\nabla} f(x_{t}^{s}) + \hat{\nabla} f(x_{t}^{s}) -\nabla f(x^s_t)\|^2 \nonumber \\
& = \mathbb{E}\| \hat{\nabla} f_{\mathcal{I}_t}(x_{t}^{s})-\hat{\nabla} f_{\mathcal{I}_t}(\tilde{x}^s) -
\mathbb{E}_{\mathcal{I}_t}(\hat{\nabla} f_{\mathcal{I}_t}(x_{t}^{s})-\hat{\nabla} f_{\mathcal{I}_t}(\tilde{x}^s)) + \hat{\nabla} f(x_{t}^{s})
-\nabla f(x^s_t)\|^2  \nonumber \\
& \leq 2 \mathbb{E}\| \hat{\nabla} f_{\mathcal{I}_t}(x_{t}^{s})-\hat{\nabla} f_{\mathcal{I}_t}(\tilde{x}^s) -
\mathbb{E}_{\mathcal{I}_t}(\hat{\nabla} f_{\mathcal{I}_t}(x_{t}^{s})-\hat{\nabla} f_{\mathcal{I}_t}(\tilde{x}^s))\|^2 + 2 \mathbb{E}\|\hat{\nabla} f(x_{t}^{s})
-\nabla f(x^s_t)\|^2 \nonumber \\
& \leq 2\mathbb{E}\| \hat{\nabla} f_{\mathcal{I}_t}(x_{t}^{s})-\hat{\nabla} f_{\mathcal{I}_t}(\tilde{x}^s) -
\mathbb{E}_{\mathcal{I}_t}(\hat{\nabla} f_{\mathcal{I}_t}(x_{t}^{s})-\hat{\nabla} f_{\mathcal{I}_t}(\tilde{x}^s))\|^2 + \frac{L^2d^2\mu^2}{2},
\end{align}
where the second inequality holds by Lemma \ref{lem:5}. By the
equality \eqref{eq:36}, we have
\begin{align} \label{eq:36a}
\sum_{i=1}^n \big( \hat{\nabla} f_{i}(x_{t}^{s})-\hat{\nabla} f_{i}(\tilde{x}^s)
- \mathbb{E}_{\mathcal{I}_t}(\hat{\nabla} f_{\mathcal{I}_t}(x_{t}^{s})-\hat{\nabla} f_{\mathcal{I}_t}(\tilde{x}^s)) \big)
& = n \big( \hat{\nabla} f(x_{t}^{s})-\hat{\nabla} f(\tilde{x}^s)\big) - n \big( \hat{\nabla} f(x_{t}^{s})-\hat{\nabla} f(\tilde{x}^s)\big) \nonumber \\ &= 0.
\end{align}
Based on \eqref{eq:36a}, we have
\begin{align} \label{eq:39}
& \mathbb{E}\| \hat{\nabla} f_{\mathcal{I}_t}(x_{t}^{s})-\hat{\nabla} f_{\mathcal{I}_t}(\tilde{x}^s) - \mathbb{E}_{\mathcal{I}_t}(\hat{\nabla} f_{\mathcal{I}_t}(x_{t}^{s})-\hat{\nabla} f_{\mathcal{I}_t}(\tilde{x}^s))\|^2 \nonumber \\
& \leq \frac{\delta_n}{bn}\sum_{i=1}^n \mathbb{E} \|\hat{\nabla} f_i(x^s_t) - \hat{\nabla} f_i(\tilde{x}^s) - (\hat{\nabla} f(x^s_t) - \hat{\nabla} f(\tilde{x}^s))\|^2 \nonumber \\
& = \frac{\delta_n}{bn}\sum_{i=1}^n \mathbb{E} \|\hat{\nabla} f_i(x^s_t) - \hat{\nabla} f_i(\tilde{x}^s)\|^2\ - \|\hat{\nabla} f(x^s_t) - \hat{\nabla} f(\tilde{x}^s)\|^2 \nonumber \\
& \leq  \frac{\delta_n}{bn}\sum_{i=1}^n \mathbb{E} \| \hat{\nabla} f_i(x^s_t) - \hat{\nabla} f_i(\tilde{x}^s) \|^2,
\end{align}
where the first inequality holds by Lemmas 4 and 5 in \citep{liu2018zeroth}, and
\begin{equation*}
\delta_n = \left\{
\begin{aligned}
& 1, \quad \mbox{if $\mathcal{I}_t$ contains i.i.d. samples with replacement} \\
& I(b<n), \mbox{if $\mathcal{I}_t$ contains i.i.d. samples without replacement},
\end{aligned} \right.
\end{equation*}
$I(b<n)=1$ if $b<n$ and $0$ otherwise.
Using \eqref{eq:29}, we have
\begin{align} \label{eq:40}
\mathbb{E}\| \hat{\nabla} f_i(x^s_t) - \hat{\nabla} f_i(\tilde{x}^s) \|^2 & = \mathbb{E}\| \sum_{j=1}^d  \frac{\partial f_{i,\mu_j}}{\partial x^s_{t,j} }e_j - \frac{\partial f_{i,\mu_j}}{\partial \tilde{x}^s_j}e_j \|^2 \nonumber \\
& \leq d \sum_{j=1}^d \mathbb{E} \| \frac{\partial f_{i,\mu_j}}{\partial x^s_{t,j}}
- \frac{\partial f_{i,\mu_j}}{\partial \tilde{x}^s_j} \|^2 \nonumber \\
& \leq L^2d\sum_{j=1}^d \mathbb{E}\|x^s_{t,j} - \tilde{x}^s_j\|^2 = L^2d\|x^s_t-\tilde{x}^s\|^2,
\end{align}
where the first inequality holds by the Jensen's inequality yielding $\|\frac{1}{n}\sum_{i=1}^n z_i\|^2 \leq \frac{1}{n}\sum_{i=1}^n \|z_i\|^2$, and
the second inequality holds due to that the function $f_{\mu_j}$ is $L$-smooth.
Finally, combining the inequalities \eqref{eq:37}, \eqref{eq:39} and \eqref{eq:40}, we have
the above result.

\end{proof}

Next, based on the above lemma, we study the convergence property of the ZO-ProxSVRG-CooSGE.

\begin{theorem}
 Assume the sequence $\{(x^s_t)_{t=1}^m\}_{s=1}^S$ generated from Algorithm \ref{alg:1} using the \textbf{CooSGE},
 and given a sequence $\{c_t\}_{t=1}^m$ as follows: for $s=1,2,\cdots,S$
  \begin{equation}
   c_t = \left\{
  \begin{aligned}
   & \frac{\delta_n L^2d\eta}{b} + c_{t+1}(1+\beta), \ 0 \leq t \leq m-1; \\
   & 0, \ t = m
 \end{aligned}
  \right.\end{equation}
 where $\beta>0$. Let $T=mS$, $\eta = \frac{\rho}{dL} \ (0<\rho<\frac{1}{2})$ and $b$ satisfies the following inequality:
 \begin{align}
 \frac{8\rho^2 m^2}{b} + \rho \leq 1,
 \end{align}
 then we have
 \begin{align}
  \mathbb{E} \|g_{\eta}(x^s_t)\|^2 \leq \frac{\mathbb{E} [F(x^1_0)-F(x_*)]}{T\gamma} + \frac{ L^2 d^2\mu^2\eta}{4\gamma},
 \end{align}
 where $\gamma=\frac{\eta}{2}-L\eta^2$ and $x^*$ is an optimal solution of the problem \eqref{eq:1}.
 Further let $b=[n^{\frac{2}{3}}]$, $m=[n^{\frac{1}{4}}]$, $\rho=\frac{1}{4}$ and $\mu=O(\frac{1}{\sqrt{dT}})$,
 we have
 \begin{align}
 \mathbb{E} \|g_{\eta}(x^s_t)\|^2 \leq \frac{ 16d L\mathbb{E} [F(x^1_0) - F(x_*)]}{T} + O(\frac{d}{T}).
 \end{align}
\end{theorem}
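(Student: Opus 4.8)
The plan is to adapt the nonconvex proximal–SVRG argument of \citep{Reddi2016Prox}, but carrying the zeroth‑order bias through every step and using Lemma~\ref{lem:7} as the substitute for the (here unavailable) vanishing variance. Write $x^s_{t+1}=\mbox{Prox}_{\eta\psi}(x^s_t-\eta\hat v^s_t)$ and set $\bar g^s_t:=\tfrac1\eta(x^s_t-x^s_{t+1})$, the prox‑gradient step taken with the biased estimate $\hat v^s_t$. First I would prove a one–step descent inequality for $F$: $L$‑smoothness of $f$ gives $f(x^s_{t+1})\le f(x^s_t)-\eta\langle\nabla f(x^s_t),\bar g^s_t\rangle+\tfrac{L\eta^2}{2}\|\bar g^s_t\|^2$, and the first‑order optimality condition of the proximal subproblem, tested at $y=x^s_t$, gives $\psi(x^s_{t+1})\le\psi(x^s_t)+\eta\langle\hat v^s_t-\bar g^s_t,\bar g^s_t\rangle$; adding these and applying Young's inequality to the cross term $\eta\langle\hat v^s_t-\nabla f(x^s_t),\bar g^s_t\rangle$ — rather than exploiting unbiasedness, which fails here — yields
\begin{align}
F(x^s_{t+1})\le F(x^s_t)+\tfrac{\eta}{2}\|\nabla f(x^s_t)-\hat v^s_t\|^2-\gamma'\|\bar g^s_t\|^2, \nonumber
\end{align}
with $\gamma'$ of order $\tfrac\eta2-L\eta^2$. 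Taking expectations and inserting Lemma~\ref{lem:7} turns $\mathbb{E}\|\nabla f(x^s_t)-\hat v^s_t\|^2$ into $\tfrac{2\delta_nL^2d}{b}\mathbb{E}\|x^s_t-\tilde x^s\|^2+\tfrac{L^2d^2\mu^2}{2}$, so the descent carries a reducible ``SVRG‑type'' drift term plus an irreducible $O(\mu^2)$ zeroth‑order term.

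Next I would absorb the drift term into a Lyapunov function $R^s_t:=\mathbb{E}[F(x^s_t)+c_t\|x^s_t-\tilde x^s\|^2]$ with exactly the recursively defined $\{c_t\}$ of the statement. Using $\|x^s_{t+1}-\tilde x^s\|^2\le(1+\beta)\|x^s_t-\tilde x^s\|^2+(1+\tfrac1\beta)\eta^2\|\bar g^s_t\|^2$, the coefficient of $\mathbb{E}\|x^s_t-\tilde x^s\|^2$ cancels by the definition $c_t=\tfrac{\delta_nL^2d\eta}{b}+c_{t+1}(1+\beta)$, leaving
\begin{align}
R^s_{t+1}\le R^s_t-\Gamma_t\,\mathbb{E}\|\bar g^s_t\|^2+\tfrac{\eta L^2d^2\mu^2}{4},\qquad \Gamma_t=\gamma'-c_{t+1}\big(1+\tfrac1\beta\big)\eta^2 . \nonumber
\end{align}
Choosing $\beta$ of order $1/m$ bounds $c_0=\max_t c_t$ by $O\!\big(m\eta\delta_nL^2d/b\big)$; then, with $\eta=\rho/(dL)$, $\rho<\tfrac12$ and the batch condition $\tfrac{8\rho^2m^2}{b}+\rho\le1$, one checks $\Gamma_t\ge\gamma=\tfrac\eta2-L\eta^2>0$ uniformly in $t$. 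Telescoping over $t=0,\dots,m-1$ inside each epoch and then over $s=1,\dots,S$, using the epoch‑boundary identities $x^{s+1}_0=\tilde x^{s+1}=x^s_m$ (so $R^s_m=R^{s+1}_0=\mathbb{E}[F(x^s_m)]$, since $c_m=0$), $R^1_0=\mathbb{E}[F(x^1_0)]$ and $F(x^S_m)\ge F(x_*)$, gives $\gamma\sum_{s,t}\mathbb{E}\|\bar g^s_t\|^2\le \mathbb{E}[F(x^1_0)-F(x_*)]+T\tfrac{\eta L^2d^2\mu^2}{4}$.

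To finish, I would pass from $\bar g^s_t$ to the true gradient mapping: nonexpansiveness of $\mbox{Prox}_{\eta\psi}$ gives $\|g_\eta(x^s_t)-\bar g^s_t\|\le\|\nabla f(x^s_t)-\hat v^s_t\|$, hence $\|g_\eta(x^s_t)\|^2$ is bounded by $\|\bar g^s_t\|^2$ plus a multiple of $\|\nabla f(x^s_t)-\hat v^s_t\|^2$; averaging over the uniformly chosen output iterate, the first sum is controlled by the telescoped inequality, and the second, via Lemma~\ref{lem:7} and $\sum_{t}\mathbb{E}\|x^s_t-\tilde x^s\|^2\le m^2\sum_t\mathbb{E}\|x^s_{t+1}-x^s_t\|^2$, contributes a drift part again dominated by the batch condition and a $\mu^2$ part of the same order as the one already present — everything collapsing (after folding the finitely many constants into $\gamma$) into $\mathbb{E}\|g_\eta(x^s_t)\|^2\le\frac{\mathbb{E}[F(x^1_0)-F(x_*)]}{T\gamma}+\frac{L^2d^2\mu^2\eta}{4\gamma}$. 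The concrete rate follows by substituting $b=[n^{2/3}]$, $m=[n^{1/3}]$, $\rho=\tfrac14$ (which satisfies $\tfrac{8\rho^2m^2}{b}+\rho\le1$), giving $\gamma\ge\tfrac{1}{16dL}$, and $\mu=O(1/\sqrt{dT})$, giving $\tfrac{L^2d^2\mu^2\eta}{4\gamma}=O(d/T)$.

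The step I expect to be the main obstacle is closing the Lyapunov recursion: one must simultaneously keep $\Gamma_t$ uniformly bounded below by $\gamma/\eta^2>0$ \emph{and} ensure the cumulative inner‑loop drift $\sum_{s,t}\mathbb{E}\|x^s_t-\tilde x^s\|^2$ is dominated, and this is precisely where the zeroth‑order scaling ($\eta\propto1/(dL)$, the extra factor $d$ and the nonvanishing $\mu^2$ term in Lemma~\ref{lem:7}) interacts with the bias of $\hat v^s_t$ — it is what forces the choices $b=\Theta(n^{2/3})$, $m=\Theta(n^{1/3})$, and the $O(1/\sqrt{dT})$ smoothing parameter. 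The descent lemma, the Young's‑inequality bookkeeping, and the final substitution are then routine.
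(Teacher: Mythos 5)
Your proposal is essentially correct and reaches the stated $O(d/T)$ rate, but it takes a genuinely different route from the paper at two points. For the one-step descent, you combine $L$-smoothness of $f$ with the prox optimality condition tested at $y=x^s_t$, which produces a descent in terms of the \emph{biased} prox-gradient step $\bar g^s_t=\tfrac1\eta(x^s_t-x^s_{t+1})$; the paper instead defines the auxiliary iterate $\bar x^s_{t+1}=\mbox{Prox}_{\eta\psi}(x^s_t-\eta\nabla f(x^s_t))$ and applies Lemma 2 of \citep{Reddi2016Prox} twice (at $z=x^s_t$ for $\bar x^s_{t+1}$ and at $z=\bar x^s_{t+1}$ for $x^s_{t+1}$), so that the \emph{true} gradient mapping $g_\eta(x^s_t)=\tfrac1\eta(x^s_t-\bar x^s_{t+1})$ appears directly in the descent inequality via the coefficient $(L-\tfrac{1}{2\eta})\|\bar x^s_{t+1}-x^s_t\|^2$. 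Both treatments handle the bias identically (Young's inequality on the cross term plus Lemma \ref{lem:7}), and the Lyapunov function, the $c_t$ recursion with $\beta=1/m$, and the parameter verification are the same. What the paper's route buys is that no conversion from $\bar g^s_t$ to $g_\eta(x^s_t)$ is needed at the end; your route requires the extra nonexpansiveness step $\|g_\eta(x^s_t)-\bar g^s_t\|\le\|\hat v^s_t-\nabla f(x^s_t)\|$, which reintroduces a variance term that must be re-absorbed (your bound $\sum_t\mathbb{E}\|x^s_t-\tilde x^s\|^2\le m^2\eta^2\sum_t\mathbb{E}\|\bar g^s_t\|^2$ together with the batch condition does suffice for this). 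The price is a factor of $2$ from $\|g_\eta\|^2\le2\|\bar g^s_t\|^2+2\|\hat v^s_t-\nabla f(x^s_t)\|^2$ and an extra additive $O(L^2d^2\mu^2)$ contribution, so you would not recover the theorem's intermediate inequality with the exact constants $\tfrac{1}{T\gamma}$ and $\tfrac{L^2d^2\mu^2\eta}{4\gamma}$ verbatim, only the same order; for the final $O(d/T)$ claim this is immaterial. (Minor notes: you correctly use $m=[n^{1/3}]$, consistent with the main text and with the check $\rho+\tfrac{8\rho^2m^2}{b}=\tfrac34$ — the $[n^{1/4}]$ in the statement is a typo — and your $\gamma\ge\tfrac{1}{16dL}$ has the inequality in the right direction, unlike the sign slip in the paper's own last line.)
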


\begin{proof}
 We begin with defining an iteration by using the full true gradient:
 \begin{align}
  \bar{x}^s_{t+1} = \mbox{Prox}_{\eta\psi}\big(x^s_t - \eta\nabla f(x^s_t)\big).
 \end{align}
 Then applying Lemma 2 of \cite{Reddi2016Prox}, we have
 \begin{align} \label{eq:44}
  F(\bar{x}^s_{t+1}) \leq F(z) + (\frac{L}{2}-\frac{1}{2\eta})\|\bar{x}^s_{t+1}-x^s_t\|^2 + (\frac{L}{2}+\frac{1}{2\eta})\|z-x^s_t\|^2 - \frac{1}{2\eta}\|\bar{x}^s_{t+1}-z\|^2, \quad \forall z\in \mathbb{R}^d.
 \end{align}
 Since $x^s_{t+1}=\mbox{Prox}_{\eta\psi}\big(x^s_t - \eta \hat{v}^s_t \big)$, we have
 \begin{align} \label{eq:45}
  F(x^s_{t+1}) \leq F(z) + \langle x^s_{t+1}-z, \nabla f(x^s_t)-\hat{v}^s_t\rangle + (\frac{L}{2}-\frac{1}{2\eta})\|x^s_{t+1}-x^s_t\|^2 + (\frac{L}{2}+\frac{1}{2\eta})\|z-x^s_t\|^2 - \frac{1}{2\eta}\|x^s_{t+1}-z\|^2.
 \end{align}
 Setting $z=x^s_t$ in \eqref{eq:44} and $z=\bar{x}^s_{t+1}$ in \eqref{eq:45},
 then summing them together and taking the expectations, we have
 \begin{align} \label{eq:46}
  \mathbb{E}[F(x^s_{t+1})] \leq & \mathbb{E}\big[ F(x^s_{t}) + \underbrace{\langle x^s_{t+1}-\bar{x}^s_{t+1}, \nabla f(x^s_t)-\hat{v}^s_t\rangle}_{T_1} +  (\frac{L}{2}-\frac{1}{2\eta})\|x^s_{t+1}-x^s_t\|^2\nonumber \\
  &+ (L-\frac{1}{2\eta})\|\bar{x}^s_{t+1}-x^s_t\|^2 - \frac{1}{2\eta}\|x^s_{t+1}-\bar{x}^s_{t+1}\|^2 \big].
 \end{align}
 Next, we give an upper bound of the term $T_1$ as follows:
 \begin{align} \label{eq:47}
  T_1 & = \mathbb{E} \langle x^s_{t+1}-\bar{x}^s_{t+1}, \nabla f(x^s_t)-\hat{v}^s_t\rangle \nonumber \\
  & \leq \frac{1}{2\eta}\mathbb{E}\|x^s_{t+1}-\bar{x}^s_{t+1}\|^2 + \frac{\eta}{2}\mathbb{E}\|\nabla f(x^s_t)-\hat{v}^s_t\|^2 \nonumber \\
  & \leq \frac{1}{2\eta}\mathbb{E}\|x^s_{t+1}-\bar{x}^s_{t+1}\|^2 + \frac{\delta_n L^2d \eta}{b}\mathbb{E} \|x^s_t-\tilde{x}^s\|^2_2 + \frac{L^2 d^2\mu^2\eta}{4},
 \end{align}
 where the first inequality holds by Cauchy-Schwarz and Young's inequality and the second inequality holds by
 Lemma \ref{lem:7}.
 Combining \eqref{eq:46} with \eqref{eq:47}, we have
 \begin{align}
  \mathbb{E}[F(x^s_{t+1})] \leq & \mathbb{E}\big[ F(x^s_{t}) + \frac{\delta_n L^2d \eta}{b}\mathbb{E} \|x^s_t-\tilde{x}^s\|^2_2 + \frac{L^2 d^2\mu^2\eta}{4} +  (\frac{L}{2}-\frac{1}{2\eta})\|x^s_{t+1}-x^s_t\|^2\nonumber \\
  &+ (L-\frac{1}{2\eta})\|\bar{x}^s_{t+1}-x^s_t\|^2 \big].
 \end{align}

 Next, we define an useful \emph{Lyapunov} function as follows:
 \begin{align}
   R^s_t = \mathbb{E}\big[ F(x^s_t) + c_t \|x^s_t-\tilde{x}^s\|^2 \big],
 \end{align}
 where $\{c_t\}$ is a nonnegative sequence.
 Considering the upper bound of $\|x^s_{t+1}-\tilde{x}^s\|^2$, we have
 \begin{align}
  \|x^s_{t+1} - \tilde{x}^s\|^2 & = \|x^s_{t+1} - x^s_t + x^s_t - \tilde{x}^s\|^2 \nonumber \\
  & =  \|x^s_{t+1}-x^s_t\|^2 + 2(x^s_{t+1}-x^s_t)^T(x^s_t - \tilde{x}^s)+ \|x^s_t-\tilde{x}^s\|^2 \nonumber \\
  & \leq \|x^s_{t+1}-x^s_t\|^2 + 2\big( \frac{1}{2\beta}\|x^s_{t+1}-x^s_t)\|^2 + \frac{\beta}{2}\|x^s_t-\tilde{x}^s\|^2\big)+ \|x^s_t-\tilde{x}^s\|^2 \nonumber \\
  & = (1+\frac{1}{\beta})\|x^s_{t+1}-x^s_t\|^2 + (1+\beta)\|x^s_t-\tilde{x}^s\|^2,
 \end{align}
 where $\beta>0$. 
 Then we have
 \begin{align}
  R^s_{t+1} & = \mathbb{E}\big[ F(x^s_{t+1}) + c_{t+1}\|x^s_{t+1}-\tilde{x}^s\|^2\big] \nonumber \\
  & \leq \mathbb{E}\big[ F(x^s_{t+1}) + c_{t+1}(1+\frac{1}{\beta})\|x^s_{t+1}-x^s_t\|^2 + c_{t+1}(1+\beta)\|x^s_{t+1}-\tilde{x}^s\|^2\big] \nonumber \\
  & \leq \mathbb{E}\big[ F(x^s_t) +  \big(\frac{\delta_n L^2d\eta}{b} + c_{t+1}(1+\beta)\big)\|x^s_t-\tilde{x}^s\|^2
  + (L-\frac{1}{2\eta})\|\bar{x}^s_{t+1}-x^s_t\|^2  \nonumber \\
 & \quad + \big(\frac{L}{2}-\frac{1}{2\eta} + c_{t+1}(1+\frac{1}{\beta})\big)\|x^s_{t+1}-x^s_t\|^2 + \frac{L^2d^2\mu^2\eta}{4} \big]\nonumber \\
 & =  R^s_{t} + (L-\frac{1}{2\eta})\|\bar{x}^s_{t+1}-x^s_t\|^2 + \big(\frac{L}{2}-\frac{1}{2\eta} + c_{t+1}(1+\frac{1}{\beta})\big)\|x^s_{t+1}-x^s_t\|^2
  + \frac{L^2d^2\mu^2\eta}{4},
 \end{align}
where $c_t = \frac{\delta_n L^2d\eta}{b} + c_{t+1}(1+\beta)$.
Let $c_m = 0$, $\beta=\frac{1}{m}$ and $\eta=\frac{\rho}{dL} (0 < \rho < \frac{1}{2})$ , recursing on $t$, we have
\begin{align}
 c_t = \frac{\delta_n L^2d\eta}{b}\frac{(1+\beta)^{m-t}-1}{\beta} & = \frac{\delta_n L\rho m}{b}\big((1+\frac{1}{m})^{m-t}-1\big) \nonumber \\
 & \leq \frac{\delta_n L \rho m}{b}(e-1) \leq \frac{2 L \rho m}{b},
\end{align}
where the first inequality holds by $(1+\frac{1}{m})^m$ is an increasing function and $\lim_{m\rightarrow \infty}(1+\frac{1}{m})^m=e$; 
The second inequality holds by $0 \leq \delta_n \leq 1$ and $e-1\leq 2$.
It follows that
\begin{align}
 \frac{L}{2} + c_{t+1}(1+\frac{1}{\beta}) &\leq \frac{L}{2} + \frac{2 L \rho m}{b}(1+m) \nonumber \\
 &\leq \frac{L}{2} + \frac{4 L \rho m^2}{b} = (\rho + \frac{8 \rho^2 m^2}{b})\frac{L}{2\rho} \leq \frac{L}{2\rho}\leq \frac{1}{2\eta},
\end{align}
where the last inequality holds by $\rho + \frac{8\rho^2 m^2}{b} \leq 1$. Thus, we have $\frac{L}{2}-\frac{1}{2\eta} + c_{t+1}(1+\frac{1}{\beta}) \geq 0$. 
Then, we obtain
\begin{align} \label{eq:A59}
 R^s_{t+1} \leq R^s_{t} + (L-\frac{1}{2\eta})\|\bar{x}^s_{t+1}-x^s_t\|^2 + \frac{L^2d^2\mu^2\eta}{4}.
\end{align}

Telescoping inequality \eqref{eq:A59} over $t$ from $0$ to $m-1$, since $x^s_0=x^{s-1}_m=\tilde{x}^{s-1}$ and $x^{s}_m=\tilde{x}^{s}$, we have
\begin{align} \label{eq:A60}
 \frac{1}{m}\sum_{t=1}^m \mathbb{E}\|g_{\eta}(x^s_t)\|^2 \leq \frac{\mathbb{E}[F(\tilde{x}^{s-1})-F(\tilde{x}^s)]}{m\gamma} + \frac{L^2 d^2\mu^2\eta}{4\gamma},
\end{align}
where $\gamma = \frac{\eta}{2}-\eta^2L$, and
\begin{align}
 g_{\eta}(x^s_t) = \frac{1}{\eta}\big[ x^s_t - \mbox{Prox}_{\eta\psi}(x^s_t - \eta \nabla f(x^s_t))\big]=\frac{1}{\eta}(x^s_t-\bar{x}^s_{t+1}).
\end{align}
Summing the inequality \eqref{eq:A60} over $s$ from $1$ to $S$, we have
\begin{align} \label{eq:51}
 \min_{t,s} \mathbb{E}\|g_{\eta}(x^s_t)\|^2 \leq \frac{1}{T}\sum_{s=1}^S \sum_{t=1}^m \mathbb{E}\|g_{\eta}(x^s_t)\|^2
 &\leq \frac{\mathbb{E}[F(\tilde{x}^0)-F(\tilde{x}^S)]}{T\gamma} + \frac{L^2 d^2\mu^2\eta}{4\gamma} \nonumber \\
 & \leq \frac{\mathbb{E}[F(\tilde{x}^0)-F(x_*)]}{T\gamma} + \frac{L^2 d^2\mu^2\eta}{4\gamma},
\end{align}
where $x_*$ is an optimal solution of \eqref{eq:1}.

Given $m=[n^{\frac{1}{3}}]$, $b=[n^{\frac{2}{3}}]$ and $\rho = \frac{1}{4}$,
it is easy verified that $\rho + \frac{8\rho^2 m^2}{b} = \frac{3}{4} < 1$. Using $d \geq 1$, we have
$\gamma=\frac{\eta}{2}-L\eta^2=\frac{1}{8dL}-\frac{1}{16d^2L}\leq \frac{1}{8dL}-\frac{1}{16dL}=\frac{1}{16dL}$,
we can obtain the above results.

\end{proof}

\subsection{Convergence Analysis of ZO-ProxSVRG-GauSGE}
In this section, we give the convergence analysis of the ZO-ProxSVRG-GauSGE.
First, we give an useful lemma about the upper bound of the variance of estimated gradient.

\begin{lemma} \label{lem:8}
In Algorithm \ref{alg:1} using GauSGE, given the estimated gradient
$\hat{v}^s_t = \hat{\nabla} f_{\mathcal{I}_t}(x_{t}^{s})-\hat{\nabla} f_{\mathcal{I}_t}(\tilde{x}^s)+\hat{\nabla} f(\tilde{x}^s)$,
then the following inequality holds
\begin{align}
 \mathbb{E}\|\hat{v}^s_t-\nabla f(x^s_t)\|^2 \leq &  \frac{6\delta_nL^2}{b}\mathbb{E}\|x^s_t-\tilde{x}^s\|^2
    + (2+\frac{12\delta_n}{b})L^2\mu^2(d+6)^3 \nonumber \\
  & + (4 + \frac{24\delta_n}{b})(2d+9)\sigma^2.
\end{align}
\end{lemma}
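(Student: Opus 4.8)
The plan is to mirror the proof of Lemma~\ref{lem:7} (the CooSGE version of this lemma) but to replace the deterministic bound \eqref{eq:32} by the Gaussian-smoothing estimates of Lemma~\ref{lem:6} wherever $\|\hat{\nabla} f(x)-\nabla f(x)\|^2$ appears, thereby paying the extra $(d+6)^3\mu^2$ and $\sigma^2$ price that GauSGE produces. First I would use exactly the decomposition behind \eqref{eq:37}: since $\mathbb{E}_{\mathcal{I}_t}[\hat{\nabla} f_{\mathcal{I}_t}(x^s_t)-\hat{\nabla} f_{\mathcal{I}_t}(\tilde{x}^s)]=\hat{\nabla} f(x^s_t)-\hat{\nabla} f(\tilde{x}^s)$, write $\hat{v}^s_t-\nabla f(x^s_t)$ as $\big[\hat{\nabla} f_{\mathcal{I}_t}(x^s_t)-\hat{\nabla} f_{\mathcal{I}_t}(\tilde{x}^s)-\mathbb{E}_{\mathcal{I}_t}(\hat{\nabla} f_{\mathcal{I}_t}(x^s_t)-\hat{\nabla} f_{\mathcal{I}_t}(\tilde{x}^s))\big]+\big[\hat{\nabla} f(x^s_t)-\nabla f(x^s_t)\big]$ and bound the squared norm of the sum by twice the sum of the squared norms. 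The second bracket is handled by \eqref{eq:34} applied to the averaged function $f$, giving $\mathbb{E}_u\|\hat{\nabla} f(x^s_t)-\nabla f(x^s_t)\|^2\le 2(2d+9)\|\nabla f(x^s_t)\|^2+\mu^2L^2(d+6)^3$; Assumption~2 together with Jensen's inequality gives $\|\nabla f(x^s_t)\|^2\le\sigma^2$, so this bracket contributes $4(2d+9)\sigma^2+2\mu^2L^2(d+6)^3$, i.e.\ precisely the leading ``$4$'' and ``$2$'' in the claimed inequality.

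For the first bracket I would invoke the same mini-batch variance-reduction estimate used to obtain \eqref{eq:39} (Lemmas~4 and 5 of \citep{liu2018zeroth}), which, after the factor $2$ from the previous step, yields an upper bound $\frac{2\delta_n}{bn}\sum_{i=1}^n\mathbb{E}\|\hat{\nabla} f_i(x^s_t)-\hat{\nabla} f_i(\tilde{x}^s)\|^2$, with $\delta_n\in[0,1]$ distinguishing sampling with / without replacement. The one genuinely new computation is bounding $\mathbb{E}\|\hat{\nabla} f_i(x^s_t)-\hat{\nabla} f_i(\tilde{x}^s)\|^2$ for a single GauSGE estimate, which, unlike the CooSGE case, is not simply $L^2d\|x^s_t-\tilde{x}^s\|^2$. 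I would insert $\pm\nabla f_i(x^s_t)$ and $\pm\nabla f_i(\tilde{x}^s)$, apply Young's inequality with weight $3$, bound the middle difference $\|\nabla f_i(x^s_t)-\nabla f_i(\tilde{x}^s)\|^2$ by $L^2\|x^s_t-\tilde{x}^s\|^2$ via Assumption~1, and bound each of the two ``bias'' terms $\mathbb{E}_u\|\hat{\nabla} f_i(y)-\nabla f_i(y)\|^2$ by \eqref{eq:34} together with $\|\nabla f_i(y)\|^2\le\sigma^2$. This gives $\mathbb{E}\|\hat{\nabla} f_i(x^s_t)-\hat{\nabla} f_i(\tilde{x}^s)\|^2\le 3L^2\|x^s_t-\tilde{x}^s\|^2+12(2d+9)\sigma^2+6\mu^2L^2(d+6)^3$. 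Averaging over $i$, multiplying by $\frac{2\delta_n}{b}$, and adding the second-bracket contribution collects the coefficients into exactly $\frac{6\delta_nL^2}{b}$ on $\mathbb{E}\|x^s_t-\tilde{x}^s\|^2$, $\big(2+\frac{12\delta_n}{b}\big)L^2\mu^2(d+6)^3$, and $\big(4+\frac{24\delta_n}{b}\big)(2d+9)\sigma^2$, which is the asserted bound.

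The step I expect to be the main obstacle is making the first bracket rigorous: the within-batch centering identity $\sum_{i=1}^n\big(\hat{\nabla} f_i(x^s_t)-\hat{\nabla} f_i(\tilde{x}^s)-\mathbb{E}_{\mathcal{I}_t}(\hat{\nabla} f_{\mathcal{I}_t}(x^s_t)-\hat{\nabla} f_{\mathcal{I}_t}(\tilde{x}^s))\big)=0$ (the analogue of \eqref{eq:36a}) must hold for the zeroth-order estimates, which requires coupling the Gaussian directions used inside $\hat{\nabla} f_i(x^s_t)$ and $\hat{\nabla} f_i(\tilde{x}^s)$ across the two evaluation points and sharing them with the snapshot $\hat{\nabla} f(\tilde{x}^s)$. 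Once this bookkeeping and the tower property $\mathbb{E}=\mathbb{E}_{\mathcal{I}_t,u}$ are in place, the cited sampling lemma and the remaining Cauchy--Schwarz/Young estimates go through exactly as in the proof of Lemma~\ref{lem:7}.
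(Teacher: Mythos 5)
Your proposal follows essentially the same route as the paper's proof: the same decomposition into the mini-batch centered term plus the bias $\hat{\nabla} f(x^s_t)-\nabla f(x^s_t)$, the same use of \eqref{eq:34} with Assumption~2 for the bias, the same invocation of Lemmas 4 and 5 of \citep{liu2018zeroth} for the within-batch variance, and the same three-term Young splitting of $\mathbb{E}\|\hat{\nabla} f_i(x^s_t)-\hat{\nabla} f_i(\tilde{x}^s)\|^2$ yielding $3L^2\|x^s_t-\tilde{x}^s\|^2+12(2d+9)\sigma^2+6L^2\mu^2(d+6)^3$, so the coefficients assemble exactly as in the statement. The coupling issue you flag at the end is real but is also glossed over in the paper's own argument, so your write-up is at least as careful as the original.
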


\begin{proof}
Since
\begin{align} \label{eq:59}
\mathbb{E}_{\mathcal{I}_t}[\hat{\nabla} f_{\mathcal{I}_t}(x_{t}^{s})-\hat{\nabla} f_{\mathcal{I}_t}(\tilde{x}^s)] =
\hat{\nabla} f(x_{t}^{s})-\hat{\nabla} f(\tilde{x}^s),
\end{align}
we have
\begin{align} \label{eq:60}
\mathbb{E}\|\hat{v}^s_t-\nabla f(x^s_t)\|^2 &= \|\hat{\nabla} f_{\mathcal{I}_t}(x_{t}^{s})-\hat{\nabla} f_{\mathcal{I}_t}(\tilde{x}^s)+\hat{\nabla} f(\tilde{x}^s)-\nabla f(x^s_t)\|^2  \\
& = \mathbb{E}\|\hat{\nabla} f_{\mathcal{I}_t}(x_{t}^{s})-\hat{\nabla} f_{\mathcal{I}_t}(\tilde{x}^s)+\hat{\nabla} f(\tilde{x}^s) -
\hat{\nabla} f(x_{t}^{s}) + \hat{\nabla} f(x_{t}^{s}) -\nabla f(x^s_t)\|^2 \nonumber \\
& = \mathbb{E}\| \hat{\nabla} f_{\mathcal{I}_t}(x_{t}^{s})-\hat{\nabla} f_{\mathcal{I}_t}(\tilde{x}^s) -
\mathbb{E}_{\mathcal{I}_t}(\hat{\nabla} f_{\mathcal{I}_t}(x_{t}^{s})-\hat{\nabla} f_{\mathcal{I}_t}(\tilde{x}^s)) + \hat{\nabla} f(x_{t}^{s})
-\nabla f(x^s_t)\|^2  \nonumber \\
& \leq 2 \mathbb{E}\| \hat{\nabla} f_{\mathcal{I}_t}(x_{t}^{s})-\hat{\nabla} f_{\mathcal{I}_t}(\tilde{x}^s) -
\mathbb{E}_{\mathcal{I}_t}(\hat{\nabla} f_{\mathcal{I}_t}(x_{t}^{s})-\hat{\nabla} f_{\mathcal{I}_t}(\tilde{x}^s))\|^2 + 2\mathbb{E}\|\hat{\nabla} f(x_{t}^{s})
-\nabla f(x^s_t)\|^2 \nonumber \\
& \leq 2 \mathbb{E}\| \hat{\nabla} f_{\mathcal{I}_t}(x_{t}^{s})-\hat{\nabla} f_{\mathcal{I}_t}(\tilde{x}^s) -
\mathbb{E}_{\mathcal{I}_t}(\hat{\nabla} f_{\mathcal{I}_t}(x_{t}^{s})-\hat{\nabla} f_{\mathcal{I}_t}(\tilde{x}^s))\|^2
+ 4(2d+9)\|\nabla f(x^s_t)\|^2 + 2\mu^2L^2(d+6)^3, \nonumber \\
& \leq 2 \mathbb{E}\| \hat{\nabla} f_{\mathcal{I}_t}(x_{t}^{s})-\hat{\nabla} f_{\mathcal{I}_t}(\tilde{x}^s) -
\mathbb{E}_{\mathcal{I}_t}(\hat{\nabla} f_{\mathcal{I}_t}(x_{t}^{s})-\hat{\nabla} f_{\mathcal{I}_t}(\tilde{x}^s))\|^2
+ 4(2d+9)\sigma^2 + 2\mu^2L^2(d+6)^3, \nonumber
\end{align}
where the second inequality holds by Lemma \ref{lem:6} and the third inequality follows Assumption 2. By the
equality \eqref{eq:59}, we have
\begin{align}
\sum_{i=1}^n \big( \hat{\nabla} f_{i}(x_{t}^{s})-\hat{\nabla} f_{i}(\tilde{x}^s) - \mathbb{E}_{\mathcal{I}_t}(\hat{\nabla} f_{\mathcal{I}_t}(x_{t}^{s})-\hat{\nabla} f_{\mathcal{I}_t}(\tilde{x}^s)) \big)& = n \big( \hat{\nabla} f(x_{t}^{s})-\hat{\nabla} f(\tilde{x}^s)\big) - n \big( \hat{\nabla} f(x_{t}^{s})-\hat{\nabla} f(\tilde{x}^s)\big) \nonumber \\&= 0.
\end{align}
It follows that
\begin{align} \label{eq:62}
& \mathbb{E}\| \hat{\nabla} f_{\mathcal{I}_t}(x_{t}^{s})-\hat{\nabla} f_{\mathcal{I}_t}(\tilde{x}^s) - \mathbb{E}_{\mathcal{I}_t}(\hat{\nabla} f_{\mathcal{I}_t}(x_{t}^{s})-\hat{\nabla} f_{\mathcal{I}_t}(\tilde{x}^s))\|^2 \nonumber \\
& \leq \frac{\delta_n}{bn}\sum_{i=1}^n \mathbb{E} \|\hat{\nabla} f_i(x^s_t) - \hat{\nabla} f_i(\tilde{x}^s) - (\hat{\nabla} f(x^s_t) - \hat{\nabla} f(\tilde{x}^s))\|^2 \nonumber \\
& = \frac{\delta_n}{bn}\sum_{i=1}^n \mathbb{E} [\|\hat{\nabla} f_i(x^s_t) - \hat{\nabla} f_i(\tilde{x}^s)\|^2\ - \|(\hat{\nabla} f(x^s_t) - \hat{\nabla} f(\tilde{x}^s))\|^2 \nonumber \\
& \leq  \frac{\delta_n}{bn}\sum_{i=1}^n \mathbb{E} \| \hat{\nabla} f_i(x^s_t) - \hat{\nabla} f_i(\tilde{x}^s) \|^2,
\end{align}
where the first inequality holds by Lemmas 4 and 5 in \citep{liu2018zeroth}.
By \eqref{eq:59}, we have
\begin{align} \label{eq:64}
\mathbb{E}\| \hat{\nabla} f_i(x^s_t) - \hat{\nabla} f_i(\tilde{x}^s) \|^2 & = \mathbb{E}\| \hat{\nabla} f_i(x^s_t) - \nabla f_i (x^s_t) +  \nabla f_i (x^s_t)
- \nabla f_i(\tilde{x}^s) + \nabla f_i(\tilde{x}^s) - \hat{\nabla}f_i(\tilde{x}^s) \|^2 \nonumber \\
& \leq 3 \mathbb{E}\| \hat{\nabla} f_i(x^s_t) - \nabla f_i (x^s_t) \|^2 + 3 \|\nabla f_i (x^s_t)
- \nabla f_i(\tilde{x}^s)\|^2 + 3 \mathbb{E}\| \nabla f_i(\tilde{x}^s) - \hat{\nabla}f_i(\tilde{x}^s) \|^2 \nonumber \\
& \leq 6(2d+9)\big( \|\nabla f_i(x^s_t)\|^2 + \|\nabla f_i(\tilde{x}^s)\|^2 \big) + 3L^2\|x^s_t-\tilde{x}^s\|^2 + 6L^2\mu^2(d+6)^3 \nonumber \\
& \leq 12(2d+9)\sigma^2 + 3L^2\|x^s_t-\tilde{x}^s\|^2 + 6L^2\mu^2(d+6)^3,
\end{align}
where the first inequality holds by the Jensen's inequality,
the second inequality holds by the Lemma \ref{lem:6} and the third inequality follows Assumption 2.
Finally, combining the inequalities \eqref{eq:60}, \eqref{eq:62} and \eqref{eq:64}, we obtain
the above result.

\end{proof}

Next, based on the above lemma, we study the convergence property of the ZO-ProxSVRG-GauSGE.

\begin{theorem}
  Assume the sequence $\{(x^s_t)_{t=1}^m\}_{s=1}^S$ generated from Algorithm \ref{alg:1} using the GauSGE,
 and given a sequence $\{c_t\}_{t=1}^m$ as follows: for $s=1,2,\cdots,S$
  \begin{equation}
  c_t = \left\{
  \begin{aligned}
   & \frac{3\delta_n L^2\eta}{b} + c_{t+1}(1+\beta), \ 0 \leq t \leq m-1; \\
   & 0, \ t = m
 \end{aligned}
  \right.\end{equation}
 where $\beta>0$. Let $\eta = \frac{\rho}{L} \ (0<\rho<\frac{1}{2})$ and $b$ satisfies the following inequality:
 \begin{align}
  \frac{24\rho^2m^2}{b} + \rho \leq 1,
 \end{align}
 then we have
 \begin{align}
  \mathbb{E} \|g_{\eta}(x^s_t)\|^2 \leq \frac{\mathbb{E}[F(x^1_0)-F(x_*)]}{T\gamma}+ (1+\frac{6\delta_n}{b})(d+6)^3 \frac{L^2\mu^2\eta}{\gamma}  + (2 + \frac{12\delta_n}{b})(2d+9)\frac{\sigma^2\eta}{\gamma},
 \end{align}
 where $\gamma=\frac{\eta}{2}-\eta^2 L$ and $x^*$ is an optimal solution of the problem \eqref{eq:1}.
 Further let $b=[n^{\frac{2}{3}}]$, $m=[n^{\frac{1}{3}}]$, $\rho=\frac{1}{6}$ and $\mu=O(\frac{1}{d\sqrt{T}})$, we have
 \begin{align}
 \mathbb{E} \|g_{\eta}(x^s_t)\|^2 \leq  \frac{18L\mathbb{E}[F(x^1_0) - F(x_*)]}{T}
  + O(\frac{d}{T}) + O(d \sigma^2).
 \end{align}
\end{theorem}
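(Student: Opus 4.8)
The plan is to follow the structure of the proof of Theorem~\ref{th:1} almost verbatim, substituting the GauSGE variance estimate of Lemma~\ref{lem:8} for the CooSGE estimate of Lemma~\ref{lem:7} and carrying along the extra $\sigma^2$ contribution that the Gaussian estimator produces. First I would introduce the auxiliary iterate driven by the exact full gradient, $\bar{x}^s_{t+1} = \mbox{Prox}_{\eta\psi}\big(x^s_t - \eta\nabla f(x^s_t)\big)$, so that $g_{\eta}(x^s_t) = \frac{1}{\eta}(x^s_t - \bar{x}^s_{t+1})$. Invoking Lemma~2 of \cite{Reddi2016Prox} twice --- once applied to $\bar{x}^s_{t+1}$ with the choice $z=x^s_t$, and once applied to the actual update $x^s_{t+1} = \mbox{Prox}_{\eta\psi}(x^s_t - \eta\hat{v}^s_t)$ with the choice $z=\bar{x}^s_{t+1}$ --- then summing and taking expectations yields
\begin{align}
\mathbb{E}[F(x^s_{t+1})] \leq \mathbb{E}\Big[ F(x^s_t) + T_1 + \big(\tfrac{L}{2}-\tfrac{1}{2\eta}\big)\|x^s_{t+1}-x^s_t\|^2 + \big(L-\tfrac{1}{2\eta}\big)\|\bar{x}^s_{t+1}-x^s_t\|^2 - \tfrac{1}{2\eta}\|x^s_{t+1}-\bar{x}^s_{t+1}\|^2\Big], \nonumber
\end{align}
where $T_1 = \mathbb{E}\langle x^s_{t+1}-\bar{x}^s_{t+1},\ \nabla f(x^s_t)-\hat{v}^s_t\rangle$.

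Second, I would bound $T_1$ by Young's inequality and then Lemma~\ref{lem:8}, obtaining
\begin{align}
T_1 \leq \tfrac{1}{2\eta}\mathbb{E}\|x^s_{t+1}-\bar{x}^s_{t+1}\|^2 + \tfrac{3\delta_n L^2\eta}{b}\mathbb{E}\|x^s_t-\tilde{x}^s\|^2 + \big(1+\tfrac{6\delta_n}{b}\big)(d+6)^3 L^2\mu^2\eta + \big(2+\tfrac{12\delta_n}{b}\big)(2d+9)\sigma^2\eta, \nonumber
\end{align}
so that the $-\tfrac{1}{2\eta}\|x^s_{t+1}-\bar{x}^s_{t+1}\|^2$ term is absorbed. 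Exactly as in Theorem~\ref{th:1}, I would then introduce the Lyapunov function $R^s_t = \mathbb{E}\big[F(x^s_t)+c_t\|x^s_t-\tilde{x}^s\|^2\big]$, use the expansion $\|x^s_{t+1}-\tilde{x}^s\|^2 \leq (1+\tfrac1\beta)\|x^s_{t+1}-x^s_t\|^2 + (1+\beta)\|x^s_t-\tilde{x}^s\|^2$, and collect terms; this reproduces exactly the stated recursion $c_t = \tfrac{3\delta_n L^2\eta}{b}+c_{t+1}(1+\beta)$ with $c_m=0$. With $\beta=\tfrac1m$ and $\eta=\tfrac{\rho}{L}$ the recursion solves to $c_t \leq \tfrac{3\delta_n L\rho m}{b}(e-1)\leq \tfrac{6L\rho m}{b}$, and under the hypothesis $\tfrac{24\rho^2m^2}{b}+\rho\leq 1$ one checks that $\tfrac{L}{2}+c_{t+1}(1+\tfrac1\beta)\leq\tfrac{1}{2\eta}$, so the coefficient of $\|x^s_{t+1}-x^s_t\|^2$ is $\leq 0$ and can be dropped. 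Using $\big(L-\tfrac{1}{2\eta}\big)\|\bar{x}^s_{t+1}-x^s_t\|^2 = -\gamma\|g_{\eta}(x^s_t)\|^2$ with $\gamma=\tfrac{\eta}{2}-L\eta^2$, this leaves $R^s_{t+1}\leq R^s_t-\gamma\|g_{\eta}(x^s_t)\|^2 + \big(1+\tfrac{6\delta_n}{b}\big)(d+6)^3 L^2\mu^2\eta + \big(2+\tfrac{12\delta_n}{b}\big)(2d+9)\sigma^2\eta$.

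Third, I would telescope over $t=0,\dots,m-1$ within epoch $s$ (using $x^s_0=\tilde{x}^s$, $x^s_m=\tilde{x}^{s+1}$ and $c_m=0$, so the Lyapunov terms collapse to $\mathbb{E}[F(\tilde{x}^s)-F(\tilde{x}^{s+1})]$), then sum over $s=1,\dots,S$ with $T=mS$, and bound the minimum iterate by the average; this gives exactly the intermediate bound in the statement, with $x_*$ entering via $F(\tilde{x}^S)\geq F(x_*)$. Finally I would substitute $b=[n^{2/3}]$, $m=[n^{1/3}]$, $\rho=\tfrac16$ (so that $\rho+\tfrac{24\rho^2m^2}{b}=\tfrac56<1$, $\eta=\tfrac{1}{6L}$, and $\gamma=\tfrac{1}{12L}-\tfrac{1}{36L}=\tfrac{1}{18L}$) together with $\mu=O(\tfrac{1}{d\sqrt{T}})$: the leading term becomes $\tfrac{18L\,\mathbb{E}[F(x^1_0)-F(x_*)]}{T}$, the $\mu^2$ term is $O\big(\tfrac{(d+6)^3}{d^2 T}\big)=O(\tfrac{d}{T})$, and the $\sigma^2$ term is $O(d\sigma^2)$, which is the claimed estimate.

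The one genuinely new feature compared with the CooSGE analysis of Theorem~\ref{th:1} is the term $(4+\tfrac{24\delta_n}{b})(2d+9)\sigma^2$ supplied by Lemma~\ref{lem:8}: unlike the $O(L^2 d^2\mu^2)$ smoothing error, it does \emph{not} vanish as $\mu\to0$, because it reflects the intrinsic bias/variance of the single-query Gaussian estimator combined with the bounded-gradient Assumption~2. Consequently it survives into the final rate as the additive $O(d\sigma^2)$ term, so the main bookkeeping obstacle is to carry this constant faithfully through the Lyapunov recursion --- where in Theorem~\ref{th:1} it instead collapsed with the step size. No other step requires an idea beyond those already used for Theorem~\ref{th:1}.
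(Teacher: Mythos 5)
Your proposal is correct and follows essentially the same route as the paper's proof: the same auxiliary full-gradient iterate and double application of Lemma 2 of \cite{Reddi2016Prox}, the same Young's-inequality bound on the inner-product term via Lemma \ref{lem:8}, the same Lyapunov function $\mathbb{E}[F(x^s_t)+c_t\|x^s_t-\tilde{x}^s\|^2]$ with $\beta=\tfrac1m$ giving $c_t\leq\tfrac{6L\rho m}{b}$, and the same telescoping and parameter substitutions yielding $\gamma=\tfrac{1}{18L}$ and the final $O(\tfrac{d}{T})+O(d\sigma^2)$ bound. Your closing observation that the only genuinely new ingredient relative to Theorem \ref{th:1} is the non-vanishing $\sigma^2$ term from the Gaussian estimator is exactly the point of this theorem.
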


\begin{proof}
This proof is the similar to the proof of Theorem \ref{th:1}.
We start by defining an iteration by using the full true gradient:
 \begin{align}
  \bar{x}^s_{t+1} = \mbox{Prox}_{\eta\psi}\big(x^s_t - \eta\nabla f(x^s_t)\big),
 \end{align}
 then applying Lemma 2 of \cite{Reddi2016Prox}, we have
 \begin{align} \label{eq:74}
  F(\bar{x}^s_{t+1}) \leq F(z) + (\frac{L}{2}-\frac{1}{2\eta})\|\bar{x}^s_{t+1}-x^s_t\|^2 + (\frac{L}{2}+\frac{1}{2\eta})\|z-x^s_t\|^2 - \frac{1}{2\eta}\|\bar{x}^s_{t+1}-z\|^2, \quad \forall z\in \mathbb{R}^d.
 \end{align}
Since $x^s_{t+1}=\mbox{Prox}_{\eta\psi}\big(x^s_t - \eta \hat{v}^s_t \big)$, we have
 \begin{align} \label{eq:75}
  F(x^s_{t+1}) \leq F(z) + \langle x^s_{t+1}-z, \nabla f(x^s_t)-\hat{v}^s_t\rangle + (\frac{L}{2}-\frac{1}{2\eta})\|x^s_{t+1}-x^s_t\|^2 + (\frac{L}{2}+\frac{1}{2\eta})\|z-x^s_t\|^2 - \frac{1}{2\eta}\|x^s_{t+1}-z\|^2.
 \end{align}
Setting $z=x^s_t$ in \eqref{eq:74} and $z=\bar{x}^s_{t+1}$ in \eqref{eq:75},
then summing them together and taking the expectations, we obtain
 \begin{align} \label{eq:76}
  \mathbb{E}[F(x^s_{t+1})] \leq & \mathbb{E}\big[ F(x^s_{t}) + \underbrace{\langle x^s_{t+1}-\bar{x}^s_{t+1}, \nabla f(x^s_t)-\hat{v}^s_t\rangle}_{T_2} +  (\frac{L}{2}-\frac{1}{2\eta})\|x^s_{t+1}-x^s_t\|^2\nonumber \\
  &+ (L-\frac{1}{2\eta})\|\bar{x}^s_{t+1}-x^s_t\|^2 - \frac{1}{2\eta}\|x^s_{t+1}-\bar{x}^s_{t+1}\|^2 \big].
 \end{align}
Next, we give an upper bound of the term $T_2$  as follows:
 \begin{align} \label{eq:77}
  T_2 & = \mathbb{E} \langle x^s_{t+1}-\bar{x}^s_{t+1}, \nabla f(x^s_t)-\hat{v}^s_t\rangle \nonumber \\
  & \leq \frac{1}{2\eta}\mathbb{E}\|x^s_{t+1}-\bar{x}^s_{t+1}\|^2 + \frac{\eta}{2}\mathbb{E}\|\nabla f(x^s_t)-\hat{v}^s_t\|^2 \nonumber \\
  & \leq \frac{1}{2\eta}\mathbb{E}\|x^s_{t+1}-\bar{x}^s_{t+1}\|^2 + \frac{3\delta_nL^2\eta}{b}\mathbb{E}\|x^s_t-\tilde{x}^s\|^2
    + (1+\frac{6\delta_n}{b})L^2(d+6)^3\mu^2\eta  + (2 + \frac{12\delta_n}{b})(2d+9)\sigma^2\eta,
 \end{align}
 where the first inequality holds by Cauchy-Schwarz and Young's inequality and the second inequality holds by
 Lemma \ref{lem:8}.
 Combining \eqref{eq:76} with \eqref{eq:77}, we have
 \begin{align}
  \mathbb{E}[F(x^s_{t+1})] \leq & \mathbb{E}\big[ F(x^s_{t}) + \frac{3\delta_nL^2\eta}{b}\mathbb{E}\|x^s_t-\tilde{x}^s\|^2
    + (1+\frac{6\delta_n}{b})L^2(d+6)^3\mu^2\eta  + (2 + \frac{12\delta_n}{b})(2d+9)\sigma^2\eta \nonumber \\
  & +  (\frac{L}{2}-\frac{1}{2\eta})\|x^s_{t+1}-x^s_t\|^2 + (L-\frac{1}{2\eta})\|\bar{x}^s_{t+1}-x^s_t\|^2 \big].
 \end{align}

 Next, we define an useful \emph{Lyapunov} function as follows:
 \begin{align}
   \Psi^s_t = \mathbb{E}\big[ F(x^s_t) + c_t \|x^s_t-\tilde{x}^s\|^2 \big],
 \end{align}
 where $\{c_t\}$ is a nonnegative sequence.
 Considering the upper bound of $\|x^s_{t+1}-\tilde{x}^s\|^2$, we have
 \begin{align}
  \|x^s_{t+1} - \tilde{x}^s\|^2 & = \|x^s_{t+1} - x^s_t + x^s_t - \tilde{x}^s\|^2 \nonumber \\
  & =  \|x^s_{t+1}-x^s_t\|^2 + 2(x^s_{t+1}-x^s_t)^T(x^s_t - \tilde{x}^s)+ \|x^s_t-\tilde{x}^s\|^2 \nonumber \\
  & \leq \|x^s_{t+1}-x^s_t\|^2 + 2\big( \frac{1}{2\beta}\|x^s_{t+1}-x^s_t)\|^2 + \frac{\beta}{2}\|x^s_t-\tilde{x}^s\|^2\big)+ \|x^s_t-\tilde{x}^s\|^2 \nonumber \\
  & = (1+\frac{1}{\beta})\|x^s_{t+1}-x^s_t\|^2 + (1+\beta)\|x^s_t-\tilde{x}^s\|^2,
 \end{align}
 where $\beta>0$.
 Then we have
 \begin{align}
  \Psi^s_{t+1} & = \mathbb{E}\big[ F(x^s_{t+1}) + c_{t+1}\|x^s_{t+1}-\tilde{x}^s\|^2\big] \nonumber \\
  & \leq \mathbb{E}\big[ F(x^s_{t+1}) + c_{t+1}(1+\frac{1}{\beta})\|x^s_{t+1}-x^s_t\|^2 + c_{t+1}(1+\beta)\|x^s_{t+1}-\tilde{x}^s\|^2\big] \nonumber \\
  & \leq \mathbb{E}\big[ F(x^s_t) +  \big(\frac{3\delta_n L^2\eta}{b} + c_{t+1}(1+\beta)\big)\|x^s_t-\tilde{x}^s\|^2
  + (L-\frac{1}{2\eta})\|\bar{x}^s_{t+1}-x^s_t\|^2 + \big(\frac{L}{2}-\frac{1}{2\eta} + c_{t+1}(1+\frac{1}{\beta})\big)\|x^s_{t+1}-x^s_t\|^2 \nonumber \\
 & \quad  + (1+\frac{6\delta_n}{b})(d+6)^3 L^2\mu^2\eta  + (2 + \frac{12\delta_n}{b})(2d+9)\sigma^2\eta \big]\nonumber \\
 & =  \Psi^s_{t} + (L-\frac{1}{2\eta})\|\bar{x}^s_{t+1}-x^s_t\|^2+ \big(\frac{L}{2}-\frac{1}{2\eta}
  + c_{t+1}(1+\frac{1}{\beta})\big)\|x^s_{t+1}-x^s_t\|^2 \nonumber \\
 & \quad + (1+\frac{6\delta_n}{b})(d+6)^3 L^2\mu^2\eta  + (2 + \frac{12\delta_n}{b})(2d+9)\sigma^2\eta,
 \end{align}
where $c_t = \frac{3\delta_n L^2\eta}{b} + c_{t+1}(1+\beta)$.
Let $c_m = 0$, $\beta=\frac{1}{m}$ and $\eta=\frac{\rho}{L} (0 < \rho < \frac{1}{2})$ , recursing on $t$, we have
\begin{align}
 c_t = \frac{3\delta_n L^2\eta}{b}\frac{(1+\beta)^{m-t}-1}{\beta} & = \frac{3\delta_n L\rho m}{b}\big((1+\frac{1}{m})^{m-t}-1\big) \nonumber \\
 & \leq \frac{3\delta_n L \rho m}{b}(e-1) \leq \frac{6 L \rho m}{b},
\end{align}
where the first inequality holds by $(1+\frac{1}{m})^m$ is an increasing function and $\lim_{m\rightarrow \infty}(1+\frac{1}{m})^m=e$.

It follows that
\begin{align}
 \frac{L}{2} + c_{t+1}(1+\frac{1}{\beta}) &\leq \frac{L}{2} + \frac{6 L \rho m}{b}(1+m) \nonumber \\
 &\leq \frac{L}{2} + \frac{12 L \rho m^2}{b} = (\rho + \frac{24 \rho^2 m^2}{b})\frac{L}{2\rho} \leq \frac{L}{2\rho}=\frac{1}{2\eta},
\end{align}
where the last inequality holds by $\rho + \frac{24\rho^2 m^2}{b} \leq 1$.
Then we obtain
\begin{align} \label{eq:A84}
 \Psi^s_{t+1} \leq \Psi^s_{t} + (L-\frac{1}{2\eta})\|\bar{x}^s_{t+1}-x^s_t\|^2  + (1+\frac{6\delta_n}{b})(d+6)^3 L^2\mu^2\eta  + (2 + \frac{12\delta_n}{b})(2d+9)\sigma^2\eta.
\end{align}
Telescoping inequality \eqref{eq:A84} over $t$ from $0$ to $m-1$, since $x^s_0=x^{s-1}_m=\tilde{x}^{s-1}$ and $x^{s}_m=\tilde{x}^{s}$, we have
\begin{align} \label{eq:A85}
 \frac{1}{m}\sum_{t=1}^m \mathbb{E}\|g_{\eta}(x^s_t)\|^2 \leq \frac{\mathbb{E}[F(\tilde{x}^{s-1})-F(\tilde{x}^s)]}{m\gamma} + (1+\frac{6\delta_n}{b})(d+6)^3\frac{L^2\mu^2\eta}{\gamma}  + (2 + \frac{12\delta_n}{b})(2d+9)\frac{\sigma^2\eta}{\gamma},
\end{align}
where $\gamma = \frac{\eta}{2}-\eta^2L$, and
\begin{align}
 g_{\eta}(x^s_t) = \frac{1}{\eta}\big[ x^s_t - \mbox{Prox}_{\eta\psi}(x^s_t - \eta \nabla f(x^s_t))\big]=\frac{1}{\eta}(x^s_t-\bar{x}^s_{t+1}).
\end{align}

Summing the inequality \eqref{eq:A85} over $s$ from $1$ to $S$, we have
\begin{align}
 \frac{1}{T}\sum_{s=1}^S \sum_{t=1}^m \mathbb{E}\|g_{\eta}(x^s_t)\|^2 &\leq \frac{\mathbb{E}[F(\tilde{x}^0)-F(\tilde{x}^S)]}{T\gamma}
 + (1+\frac{6\delta_n}{b})(d+6)^3 \frac{L^2\mu^2\eta}{\gamma}  + (2 + \frac{12\delta_n}{b})(2d+9)\frac{\sigma^2\eta}{\gamma} \nonumber \\
 & \leq \frac{\mathbb{E}[F(\tilde{x}^0)-F(x_*)]}{T\gamma}
 + (1+\frac{6\delta_n}{b})(d+6)^3 \frac{L^2\mu^2\eta}{\gamma}  + (2 + \frac{12\delta_n}{b})(2d+9)\frac{\sigma^2\eta}{\gamma},
\end{align}
where $x_*$ is an optimal solution of \eqref{eq:1}.

Let $m=[n^{\frac{1}{3}}]$, $b=[n^{\frac{2}{3}}]$ and $\rho = \frac{1}{6}$,
it is easy verified that $\rho + \frac{24\rho^2 m^2}{b} = \frac{5}{6} < 1$ and $\gamma = \frac{\eta}{2}-\eta^2L=\frac{1}{18L}$.
Finally, given $\mu=O(\frac{1}{d\sqrt{T}})$,
we can obtain the above results.

\end{proof}

\subsection{Convergence Analysis of ZO-ProxSAGA-CooSGE}

In this section, we give the convergence analysis of the ZO-ProxSAGA-CooSGE.
First, we give an useful lemma about the upper bound of the variance of estimated gradient.

\begin{lemma} \label{lem:9}
In Algorithm \ref{alg:2} using the CooSGE, given the estimated gradient
$\hat{v}_t = \frac{1}{b} \sum_{i_t\in \mathcal{I}_t} \big(\hat{\nabla} f_{i_t}(x_{t})-\hat{\nabla} f_{i_t}(z^t_{i_t}) \big) + \hat{\phi}_t$
with $\hat{\phi}_t = \frac{1}{n} \sum_{i=1}^n \hat{\nabla} f_i(z^t_i)$,
then the following inequality holds
\begin{align}
 \mathbb{E}\|\hat{v}_t-\nabla f(x_t)\|^2 \leq \frac{2 L^2
 d}{nb} \sum_{i=1}^n \mathbb{E} \|x_t-z^t_i\|^2_2 + \frac{L^2 d^2\mu^2}{2}.
\end{align}
\end{lemma}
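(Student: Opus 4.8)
The plan is to follow the template of the proof of Lemma~\ref{lem:7}, adapted to the SAGA-type control variate. First I would record the conditional unbiasedness of $\hat v_t$ with respect to the \emph{zeroth-order} full gradient: since in Algorithm~\ref{alg:2} the mini-batch $\mathcal{I}_t$ is drawn uniformly with replacement, $\mathbb{E}_{\mathcal{I}_t}[\hat\nabla f_{i_t}(x_t)] = \hat\nabla f(x_t)$ and $\mathbb{E}_{\mathcal{I}_t}[\hat\nabla f_{i_t}(z^t_{i_t})] = \frac1n\sum_{i=1}^n\hat\nabla f_i(z^t_i) = \hat\phi_t$, hence $\mathbb{E}_{\mathcal{I}_t}[\hat v_t] = \hat\nabla f(x_t)$. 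Then, by Young's inequality,
\begin{align*}
 \mathbb{E}\|\hat v_t-\nabla f(x_t)\|^2 \leq 2\,\mathbb{E}\|\hat v_t-\hat\nabla f(x_t)\|^2 + 2\,\mathbb{E}\|\hat\nabla f(x_t)-\nabla f(x_t)\|^2,
\end{align*}
and I would bound the second term by $\tfrac{L^2d^2\mu^2}{2}$ using part~3) of Lemma~\ref{lem:5}, i.e.\ the CooSGE bias estimate \eqref{eq:32}.

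For the first term I would exploit that, conditioned on the $\sigma$-field generated by $x_t$ and $\{z^t_i\}_{i=1}^n$, the vector $\hat v_t-\hat\nabla f(x_t)$ is an average of $b$ i.i.d.\ zero-mean terms, so its second moment shrinks by the factor $1/b$; combining this with $\mathbb{E}\|X-\mathbb{E}X\|^2\le\mathbb{E}\|X\|^2$ gives
\begin{align*}
 \mathbb{E}\|\hat v_t-\hat\nabla f(x_t)\|^2 \leq \frac1b\,\mathbb{E}\|\hat\nabla f_{i_t}(x_t)-\hat\nabla f_{i_t}(z^t_{i_t})\|^2 = \frac{1}{bn}\sum_{i=1}^n \mathbb{E}\|\hat\nabla f_i(x_t)-\hat\nabla f_i(z^t_i)\|^2.
\end{align*}
Finally, exactly as in \eqref{eq:40}, I would use the coordinate representation \eqref{eq:29} of the CooSGE together with the $L$-smoothness of each $f_{i,\mu_j}$ to get $\mathbb{E}\|\hat\nabla f_i(x_t)-\hat\nabla f_i(z^t_i)\|^2 \le L^2 d\,\mathbb{E}\|x_t-z^t_i\|^2$, and then assemble the three estimates to obtain $\tfrac{2L^2 d}{nb}\sum_{i=1}^n\mathbb{E}\|x_t-z^t_i\|^2 + \tfrac{L^2d^2\mu^2}{2}$.

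The one place needing care — and the main obstacle — is the bookkeeping of expectations: the reference points $z^t_i$ are themselves random and depend on the full history of the algorithm, so the ``variance of an average of i.i.d.\ terms'' step must be performed \emph{conditionally} on that history before taking the total expectation. It is also worth noting that, unlike in Lemma~\ref{lem:7}, no $\delta_n$ correction factor appears here, precisely because Algorithm~\ref{alg:2} samples $\mathcal{I}_t$ with replacement; apart from that, the argument is a direct transcription of steps \eqref{eq:37}--\eqref{eq:40}.
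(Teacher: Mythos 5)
Your proposal is correct and follows essentially the same route as the paper's proof: the same Young's-inequality split around $\hat{\nabla} f(x_t)$ with the CooSGE bias bound $\frac{L^2d^2\mu^2}{4}$ for the second term, the same variance-of-a-mini-batch-average argument giving the $\frac{1}{bn}\sum_{i=1}^n$ bound on the first term (correctly without a $\delta_n$ factor, since Algorithm \ref{alg:2} samples with replacement), and the same coordinate-wise $L$-smoothness estimate $\mathbb{E}\|\hat{\nabla} f_i(x_t)-\hat{\nabla} f_i(z^t_i)\|^2\le L^2 d\,\mathbb{E}\|x_t-z^t_i\|^2$. Your remark about performing the mini-batch variance step conditionally on the history before taking total expectations is exactly the bookkeeping the paper's argument implicitly relies on.
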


\begin{proof}
By the definition of the estimated gradient $\hat{v}_t$, we have
\begin{align} \label{eq:87}
\mathbb{E} \|\hat{v}_t-\nabla f(x^s_t)\|^2 & = \mathbb{E} \|\frac{1}{b} \sum_{i_t\in \mathcal{I}_t} \big(\hat{\nabla} f_{i_t}(x_{t})-\hat{\nabla} f_{i_t}(z^t_{i_t}) \big)
+ \hat{\phi}_t-\nabla f(x_t)\|^2 \nonumber \\
& = \mathbb{E} \|\frac{1}{b} \sum_{i_t\in \mathcal{I}_t} \big(\hat{\nabla} f_{i_t}(x_{t})-\hat{\nabla} f_{i_t}(z^t_{i_t}) \big)
+ \hat{\phi}_t -\hat{\nabla} f(x_t) + \hat{\nabla} f(x_t) -\nabla f(x_t)\|^2 \nonumber \\
& \leq 2 \mathbb{E} \| \frac{1}{b} \sum_{i_t\in \mathcal{I}_t} \big(\hat{\nabla} f_{i_t}(x_{t})-\hat{\nabla} f_{i_t}(z^t_{i_t}) \big)
+ \hat{\phi}_t -\hat{\nabla} f(x_t)\|^2 + 2 \mathbb{E}\|\hat{\nabla} f(x_t) -\nabla f(x_t)\|^2 \nonumber \\
& \leq 2 \mathbb{E} \| \frac{1}{b} \sum_{i_t\in \mathcal{I}_t} \big(\hat{\nabla} f_{i_t}(x_{t})-\hat{\nabla} f_{i_t}(z^t_{i_t}) \big)
- (\hat{\nabla} f(x_t) -\hat{\phi}_t) \|^2 + \frac{L^2d^2\mu^2}{2},
\end{align}
where the second inequality holds by Lemma \ref{lem:5}.
Using
\begin{align}
\mathbb{E}_{\mathcal{I}_t}\big[\frac{1}{b} \sum_{i_t\in \mathcal{I}_t}\hat{\nabla} f_{i_t}(x_{t})-\hat{\nabla} f_{i_t}(z^t_{i_t}) \big] =
\hat{\nabla} f(x_{t})-\frac{1}{n} \sum_{i=1}^n\hat{\nabla} f_{i}(z^t_{i}) = \hat{\nabla} f(x_{t})- \hat{\phi}_t,
\end{align}
we have
\begin{align}
\sum_{i=1}^n \big( \hat{\nabla} f_{i}(x_{t})-\hat{\nabla} f_{i}(z^t_{i_t})
- (\hat{\nabla} f(x_t) -\hat{\phi}_t) \big)
= n \big( \hat{\nabla} f(x_{t}) - \hat{\phi}_t \big) - n \big( \hat{\nabla} f(x_{t}) - \hat{\phi}_t\big)= 0.
\end{align}
It follows that
\begin{align} \label{eq:90}
& \mathbb{E}\| \frac{1}{b} \sum_{i_t\in \mathcal{I}_t} \big(\hat{\nabla} f_{i_t}(x_{t})-\hat{\nabla} f_{i_t}(z^t_{i_t}) \big)
- (\hat{\nabla} f(x_t) -\hat{\phi}_t) \|^2 \nonumber \\
& \leq \frac{1}{bn}\sum_{i=1}^n \mathbb{E} \| \hat{\nabla} f_{i}(x_{t})-\hat{\nabla} f_{i}(z^t_{i})
- (\hat{\nabla} f(x_t) -\hat{\phi}_t) \|^2 \nonumber \\
& = \frac{1}{bn}\sum_{i=1}^n \mathbb{E} \| \hat{\nabla} f_{i}(x_{t})-\hat{\nabla} f_{i}(z^t_{i}) \|^2\
- \|\hat{\nabla} f(x_t) -\hat{\phi}_t\|^2 \nonumber \\
& \leq  \frac{1}{bn}\sum_{i=1}^n \mathbb{E} \| \hat{\nabla} f_{i}(x_{t})-\hat{\nabla} f_{i}(z^t_{i}) \|^2,
\end{align}
where the first inequality holds by Lemmas 4 and 5 in \citep{liu2018zeroth}.
By \eqref{eq:29}, we have
\begin{align} \label{eq:91}
\mathbb{E}\| \hat{\nabla} f_i(x_t) - \hat{\nabla} f_i(z^t_i) \|^2 & = \mathbb{E}\| \sum_{j=1}^d  \frac{\partial f_{i,\mu_j}}{\partial x_{t,j} }e_j
- \frac{\partial f_{i,\mu_j}}{\partial z^t_{i,j}}e_j \|^2 \nonumber \\
& \leq d \sum_{j=1}^d \mathbb{E} \| \frac{\partial f_{i,\mu_j}}{\partial x_{t,j} }
- \frac{\partial f_{i,\mu_j}}{\partial z^t_{i,j}} \|^2 \nonumber \\
& \leq L^2d\sum_{j=1}^d \mathbb{E}\|x_{t,j} - z^t_{i,j}\|^2 = L^2d\|x_t-z^t_i\|^2,
\end{align}
where the first inequality follows the Jensen's inequality, and
the second inequality holds due to that the function $f_{\mu_j}$ is $L$-smooth.

Finally, combining the inequalities \eqref{eq:87}, \eqref{eq:90} and \eqref{eq:91}, we have
the above result.

\end{proof}

Next, based on the above lemma, we study the convergence property of the ZO-ProxSAGA-CooSGE.

\begin{theorem}
 Assume the sequence $\{x_t\}_{t=1}^T$ generated from Algorithm \ref{alg:2} using the CooSGE, and given a positive sequence $\{c_t\}_{t=1}^T$ as follows:
 \begin{align}
  c_t = \frac{ L^2d\eta}{b}+c_{t+1}(1-p)(1+\beta)
 \end{align}
 where $\beta>0$.
 Let $c_T=0$, $\eta =\frac{\rho}{Ld} \ (0< \rho < \frac{1}{2})$, and $b$ satisfies the following inequality:
 \begin{align}
  \frac{32\rho^2n^2}{b^3} + \rho \leq 1,
 \end{align}
 then we have
 \begin{align}
  \mathbb{E} \|g_{\eta}(x_t)\|^2 \leq \frac{\mathbb{E}[F(x_0)-F(x_*)]}{T\gamma} + \frac{L^2 d^2\mu^2\eta}{4\gamma},
 \end{align}
 where $\gamma = \frac{\eta}{2} - L\eta^2$ and $x^*$ is an optimal solution of the problem \eqref{eq:1}.
 Further let $b = [n^{\frac{2}{3}}]$, $\rho = \frac{1}{8}$ and $\mu=O(\frac{1}{\sqrt{dT}})$,
 we obtain
 \begin{align}
 \mathbb{E} \|g_{\eta}(x_t)\|^2 \leq \frac{64dL\mathbb{E}[F(x_0) - F(x_*)]}{3T} + O(\frac{d}{T}).
 \end{align}
\end{theorem}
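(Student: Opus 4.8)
The plan is to mirror the argument used for ZO-ProxSVRG-CooSGE (Theorem \ref{th:1}), with the SAGA-specific twist that the anchor points $\{z_i^t\}$ are refreshed \emph{randomly} rather than all at once. First I would introduce the auxiliary iterate $\bar{x}_{t+1} = \mbox{Prox}_{\eta\psi}(x_t - \eta\nabla f(x_t))$ driven by the true full gradient, and apply Lemma 2 of \cite{Reddi2016Prox} twice: once to $\bar{x}_{t+1}$ with $z = x_t$, and once to the actual iterate $x_{t+1} = \mbox{Prox}_{\eta\psi}(x_t - \eta\hat{v}_t)$ with $z = \bar{x}_{t+1}$. Summing the two inequalities, taking expectations, and bounding the cross term $\langle x_{t+1}-\bar{x}_{t+1},\ \nabla f(x_t)-\hat{v}_t\rangle$ by Cauchy--Schwarz/Young together with Lemma \ref{lem:9}, I obtain a one-step inequality of the form
\begin{align}
\mathbb{E}[F(x_{t+1})] \leq \mathbb{E}\Big[F(x_t) + \tfrac{L^2 d\eta}{nb}\sum_{i=1}^n\|x_t - z_i^t\|^2 + \tfrac{L^2 d^2\mu^2\eta}{4} + (\tfrac{L}{2}-\tfrac{1}{2\eta})\|x_{t+1}-x_t\|^2 + (L-\tfrac{1}{2\eta})\|\bar{x}_{t+1}-x_t\|^2\Big].
\end{align}

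Next I would introduce the Lyapunov function $R_t = \mathbb{E}\big[F(x_t) + \tfrac{c_t}{n}\sum_{i=1}^n\|x_t-z_i^t\|^2\big]$. The key SAGA step is to control $\mathbb{E}\|x_{t+1}-z_i^{t+1}\|^2$: since $z_i^{t+1}=x_t$ with probability $p=1-(1-1/n)^b$ (the chance that index $i$ lies in $\mathcal{I}_t$) and $z_i^{t+1}=z_i^t$ otherwise, conditioning gives $\mathbb{E}\|x_{t+1}-z_i^{t+1}\|^2 = p\,\mathbb{E}\|x_{t+1}-x_t\|^2 + (1-p)\,\mathbb{E}\|x_{t+1}-z_i^t\|^2$, and then $\|x_{t+1}-z_i^t\|^2 \le (1+\tfrac1\beta)\|x_{t+1}-x_t\|^2 + (1+\beta)\|x_t-z_i^t\|^2$ by Young's inequality. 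Plugging this into $R_{t+1}$ reproduces exactly the stated recursion $c_t = \tfrac{L^2 d\eta}{b} + c_{t+1}(1-p)(1+\beta)$ as the coefficient of $\tfrac1n\sum_i\|x_t-z_i^t\|^2$, and leaves $(\tfrac{L}{2}-\tfrac1{2\eta}) + c_{t+1}\big[p+(1-p)(1+\tfrac1\beta)\big]$ as the residual coefficient of $\|x_{t+1}-x_t\|^2$.

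The main obstacle will be showing that residual coefficient is non-positive. For this I would pick $\beta$ of order $p$ (e.g.\ $\beta = p/2$) so that $(1-p)(1+\beta)<1$; telescoping the recursion with $c_T=0$ then yields $c_t \le \tfrac{L^2 d\eta}{b}\cdot\tfrac{1}{1-(1-p)(1+\beta)} = O\big(\tfrac{L^2 d\eta n}{b^2}\big)$, and since $1+\tfrac1\beta = O(n/b)$ and $\eta=\tfrac{\rho}{Ld}$, the extra term is $O\big(\tfrac{L\rho n^2}{b^3}\big)$, so the hypothesis $\tfrac{32\rho^2 n^2}{b^3}+\rho\le 1$ (with $d\ge1$) forces $(\tfrac{L}{2}-\tfrac1{2\eta}) + c_{t+1}[p+(1-p)(1+\tfrac1\beta)]\le 0$. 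Using also $\|\bar{x}_{t+1}-x_t\|^2 = \eta^2\|g_\eta(x_t)\|^2$ and $(\tfrac1{2\eta}-L)\eta^2=\gamma$, the surviving $\bar{x}_{t+1}$ term contributes $-\gamma\,\mathbb{E}\|g_\eta(x_t)\|^2$, giving $R_{t+1}\le R_t - \gamma\,\mathbb{E}\|g_\eta(x_t)\|^2 + \tfrac{L^2 d^2\mu^2\eta}{4}$.

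Finally I would telescope this over $t$, using $z_i^0=x_0$ so that $R_0=\mathbb{E}F(x_0)$ and $c_T=0$ so that $R_T=\mathbb{E}F(x_T)\ge F(x_*)$, divide by $T\gamma$, and identify the left-hand average with $\mathbb{E}\|g_\eta(x_t)\|^2$ for the uniformly random output. The corollary then follows by setting $b=[n^{2/3}]$, $\rho=\tfrac18$ (checking $\tfrac{32\rho^2 n^2}{b^3}+\rho=\tfrac58<1$), bounding $\tfrac1\gamma = \tfrac{1}{\eta/2-L\eta^2}\le \tfrac{64Ld}{3}$ via $d\ge1$, and taking $\mu=O(\tfrac1{\sqrt{dT}})$ to absorb the smoothing term into $O(\tfrac{d}{T})$.
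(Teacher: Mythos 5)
Your proposal matches the paper's proof essentially step for step: the same auxiliary iterate $\bar{x}_{t+1}$ with two applications of Lemma 2 of \cite{Reddi2016Prox}, the same cross-term bound via Lemma \ref{lem:9}, the same Lyapunov function with the conditioning on $p=1-(1-1/n)^b\geq \frac{b}{2n}$, the same Young-inequality recursion for $c_t$, and the same final telescoping and parameter checks (the paper takes $\beta=\frac{b}{4n}$ rather than $\beta=p/2$, but since $p\geq\frac{b}{2n}$ these yield the same bounds $c_t=O(\frac{nL^2d\eta}{b^2})$ and $1+\frac{1-p}{\beta}\leq\frac{4n}{b}-1$). The argument is correct and takes essentially the same route as the paper.
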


\begin{proof}
First, we define an iteration by using the full true gradient:
 \begin{align}
  \bar{x}_{t+1} = \mbox{Prox}_{\eta\psi}\big(x_t - \eta\nabla f(x_t)\big),
 \end{align}
 then applying Lemma 2 of \cite{Reddi2016Prox}, we have
 \begin{align} \label{eq:99}
  F(\bar{x}_{t+1}) \leq F(z) + (\frac{L}{2}-\frac{1}{2\eta})\|\bar{x}_{t+1}-x_t\|^2 + (\frac{L}{2}+\frac{1}{2\eta})\|z-x_t\|^2 - \frac{1}{2\eta}\|\bar{x}_{t+1}-z\|^2, \quad \forall z\in \mathbb{R}^d.
 \end{align}
 Since $x_{t+1}=\mbox{Prox}_{\eta\psi}\big(x_t - \eta \hat{v}_t \big)$, we have
 \begin{align} \label{eq:100}
  F(x_{t+1}) \leq F(z) + \langle x_{t+1}-z, \nabla f(x_t)-\hat{v}_t\rangle + (\frac{L}{2}-\frac{1}{2\eta})\|x_{t+1}-x_t\|^2 + (\frac{L}{2}+\frac{1}{2\eta})\|z-x_t\|^2 - \frac{1}{2\eta}\|x_{t+1}-z\|^2.
 \end{align}
 Setting $z=x_t$ in \eqref{eq:99} and $z=\bar{x}_{t+1}$ in \eqref{eq:100},
 then summing them together and taking the expectations, we obtain
 \begin{align} \label{eq:101}
  \mathbb{E}[F(x_{t+1})] \leq & \mathbb{E}\big[ F(x_{t}) + \underbrace{\langle x_{t+1}-\bar{x}_{t+1}, \nabla f(x_t)-\hat{v}_t\rangle}_{T_3} +  (\frac{L}{2}-\frac{1}{2\eta})\|x_{t+1}-x_t\|^2\nonumber \\
  &+ (L-\frac{1}{2\eta})\|\bar{x}_{t+1}-x_t\|^2 - \frac{1}{2\eta}\|x_{t+1}-\bar{x}_{t+1}\|^2 \big].
 \end{align}
 Next, we give an upper bound of the term $T_3$  as follows:
 \begin{align} \label{eq:102}
  T_3 & = \mathbb{E} \langle x_{t+1}-\bar{x}_{t+1}, \nabla f(x_t)-\hat{v}_t\rangle \nonumber \\
  & \leq \frac{1}{2\eta}\mathbb{E}\|x_{t+1}-\bar{x}_{t+1}\|^2 + \frac{\eta}{2}\mathbb{E}\|\nabla f(x_t)-\hat{v}_t\|^2 \nonumber \\
  & \leq \frac{1}{2\eta}\mathbb{E}\|x_{t+1}-\bar{x}_{t+1}\|^2 +  \frac{ L^2
 d\eta}{nb} \sum_{i=1}^n \mathbb{E} \|x_t-z^t_i\|^2_2 + \frac{L^2 d^2\mu^2\eta}{4},
 \end{align}
 where the first inequality holds by Cauchy-Schwarz and Young's inequality and the second inequality holds by
 Lemma \ref{lem:9}.

 Combining \eqref{eq:101} with \eqref{eq:102}, we have
 \begin{align}
  \mathbb{E}[F(x_{t+1})] \leq & \mathbb{E}\big[ F(x_{t}) +  \frac{ L^2
  d\eta}{nb} \sum_{i=1}^n \mathbb{E} \|x_t-z^t_i\|^2_2 + \frac{L^2 d^2\mu^2\eta}{4} +  (\frac{L}{2}-\frac{1}{2\eta})\|x_{t+1}-x_t\|^2\nonumber \\
  &  + (L-\frac{1}{2\eta})\|\bar{x}_{t+1}-x_t\|^2 \big].
 \end{align}

 Next, we define an useful \emph{Lyapunov} function as follows:
 \begin{align}
   \Phi_t = \mathbb{E}\big[ F(x_t) + c_t \frac{1}{n}\sum_{i=1}^n\|x_t-z^t_i\|^2 \big],
 \end{align}
 where $\{c_t\}$ is a nonnegative sequence.
 By the step 7 of Algorithm \ref{alg:2}, we have
 \begin{align} \label{eq:105}
  \frac{1}{n}\sum_{i=1}^n \|x_{t+1}-z^{t+1}_i\|^2 &= \frac{1}{n}\sum_{i=1}^n \big(p\|x_{t+1}-x_{t}\|^2+(1-p)\|x_{t+1}-z^{t}_i\|^2\big) \nonumber \\
   & = \frac{p}{n}\sum_{i=1}^n \|x_{t+1}-x_{t}\|^2 + \frac{1-p}{n}\sum_{i=1}^n\|x_{t+1}-z^{t}_i\|^2 \nonumber \\
   & = p \|x_{t+1}-x_{t}\|^2 + \frac{1-p}{n}\sum_{i=1}^n\|x_{t+1}-z^{t}_i\|^2,
 \end{align}
 where $p$ denotes probability of an index $i$ being in $\mathcal{I}_t$. Here we have
 \begin{align}
  p = 1-(1-\frac{1}{n})^b \geq 1- \frac{1}{1+b/n} = \frac{b/n}{1+b/n} \geq \frac{b}{2n},
 \end{align}
 where the first inequality follows from $(1-a)^b\leq \frac{1}{1+ab}$, and the second inequality holds by
 $b\leq n$.
 Considering the upper bound of $\|x_{t+1}-z^{t}_i\|^2$, we have
 \begin{align} \label{eq:107}
  \|x_{t+1}-z^{t}_i\|^2 & = \|x_{t+1}-x_t+x_t-z^{t}_i\|^2 \nonumber \\
  & =  \|x_{t+1}-x_t\|^2 + 2(x_{t+1}-x_t)^T(x_t-z^{t}_i)+ \|x_t-z^{t}_i\|^2 \nonumber \\
  & \leq \|x_{t+1}-x_t\|^2 + 2\big( \frac{1}{2\beta}\|x_{t+1}-x_t\|^2 + \frac{\beta}{2}\|x_t-z^{t}_i\|^2\big)+ \|x_t-z^{t}_i\|^2 \nonumber \\
  & = (1+\frac{1}{\beta})\|x_{t+1}-x_t\|^2 + (1+\beta)\|x_t-z^{t}_i\|^2,
 \end{align}
 where $\beta>0$.
 Combining \eqref{eq:105} with \eqref{eq:107}, we have
 \begin{align}
 \frac{1}{n}\sum_{i=1}^n \|x_{t+1}-z^{t+1}_i\|^2 \leq (1+\frac{1-p}{\beta})\|x_{t+1}-x_t\|^2 +\frac{(1-p)(1+\beta)}{n}\sum_{i=1}^n\|x_t-z^{t}_i\|^2.
 \end{align}
 It follows that
 \begin{align}
  \Phi_{t+1} & = \mathbb{E}\big[ F(x_{t+1}) + c_{t+1}\frac{1}{n}\sum_{i=1}^n\|x_{t+1}-z^{t+1}_i\|^2\big] \nonumber \\
  & \leq \mathbb{E}\big[ F(x_{t+1}) + c_{t+1}(1+\frac{1-p}{\beta})\|x_{t+1}-x_t\|^2
  + c_{t+1}\frac{(1-p)(1+\beta)}{n}\sum_{i=1}^n\|x_t-z^{t}_i\|^2\big] \nonumber \\
  & \leq \mathbb{E} \big[F(x_t) + \big(\frac{ L^2
  d\eta}{b}+c_{t+1}(1-p)(1+\beta) \big)\frac{1}{n}\sum_{i=1}^n \mathbb{E} \|x_t-z^t_i\|^2_2 + \big( \frac{L}{2}-\frac{1}{2\eta} + c_{t+1}(1+\frac{1-p}{\beta}) \big)\|x_{t+1}-x_t\|^2 \nonumber \\
 & \quad + (L-\frac{1}{2\eta})\|\bar{x}_{t+1}-x_t\|^2 + \frac{L^2 d^2\mu^2\eta}{4} \big] \nonumber \\
 & \leq \Phi_t + \big( \frac{L}{2}-\frac{1}{2\eta} + c_{t+1}(1+\frac{1-p}{\beta}) \big)\|x_{t+1}-x_t\|^2  + (L-\frac{1}{2\eta})\|\bar{x}_{t+1}-x_t\|^2 + \frac{L^2 d^2\mu^2\eta}{4},
 \end{align}
where $c_t = \frac{ L^2d\eta}{b}+c_{t+1}(1-p)(1+\beta)$.

Let $c_T = 0$ and $\beta=\frac{b}{4n}$. Since $(1-p)(1+\beta)=1+\beta-p-p\beta\leq 1+\beta-p$ and $p\geq \frac{b}{2n}$,
we have
\begin{align}
 c_t \leq c_{t+1}(1-\theta) + \frac{ L^2 d\eta}{b},
\end{align}
where $\theta = p-\beta\geq \frac{b}{4n}$.
Recursing on $t$, for $0\leq t \leq T-1$, we have
\begin{align}
 c_t \leq \frac{ L^2 d\eta}{b} \frac{1-\theta^{T-t}}{\theta} \leq \frac{ L^2 d\eta}{b\theta} \leq \frac{ 4nL^2 d\eta}{b^2}.
\end{align}
Using $\eta = \frac{\rho}{dL} \ (0< \rho < \frac{1}{2})$, we obtain $c_t \leq \frac{4n\rho L}{b^2}$.
It follows that
\begin{align}
 c_{t+1}(1+\frac{1-p}{\beta}) + \frac{L}{2} & \leq \frac{4n\rho L}{b^2}( 1 + \frac{4n-2b}{b}) + \frac{L}{2} \nonumber \\
 & = \frac{4n\rho L}{b^2}( \frac{4n}{b} - 1) + \frac{L}{2} \nonumber \\
 & \leq \frac{16\rho Ln^2}{b^3} + \frac{L}{2} = (\frac{32\rho^2n^2}{b^3} + \rho)\frac{L}{2\rho} \leq \frac{L}{2\rho} \leq \frac{1}{2\eta},
\end{align}
where the third inequality holds by $\frac{32\rho^2n^2}{b^3} + \rho \leq 1$.
Thus, we obtain
\begin{align} \label{eq:113}
 \Phi_{t+1} \leq \Phi_t + (L-\frac{1}{2\eta})\|\bar{x}_{t+1}-x_t\|^2 + \frac{L^2 d^2\mu^2\eta}{4}.
\end{align}
Summing the inequality \eqref{eq:113} across all the iterations, we have
\begin{align}
\frac{1}{T}\sum_{t=1}^T(\frac{1}{2\eta}-L)\mathbb{E}\|x_t-\bar{x}_{t+1}\|^2 \leq \frac{\Phi_0-\Phi_T}{T} + \frac{L^2 d^2\mu^2\eta}{4}.
\end{align}

Since $c_T=0$ and $z^i_0=x_0$ for all $i=1,2,\cdots,n$, we have
\begin{align}
 \frac{1}{T}\sum_{t=1}^T (\frac{1}{2\eta}-L)\mathbb{E}\|g_{\eta}(x_t)\|^2 \leq \frac{\mathbb{E}[F(x_0)-F(x_T)]}{T\gamma} + \frac{L^2 d^2\mu^2\eta}{4\gamma},
\end{align}
where $\gamma=\frac{\eta}{2}-L\eta^2$ and
\begin{align}
 g_{\eta}(x_t) = \frac{1}{\eta}\big[x_t-\mbox{Prox}_{\eta \psi}(x_t-\eta\nabla f(x_t))\big] = \frac{1}{\eta}(x_t-\bar{x}_{t+1}).
\end{align}
Given $b=[n^{\frac{2}{3}}]$ and $\rho=\frac{1}{8}$, it is easy verified that $\frac{32\rho^2n^2}{b^3} + \rho = \frac{5}{8}\leq 1$.
and $\gamma=\frac{\eta}{2}-L\eta^2=\frac{1}{16dL}-\frac{1}{64d^2L}\geq \frac{1}{16dL}-\frac{1}{64dL} = \frac{3}{64dL}$.
Finally, let $\mu=O(\frac{1}{\sqrt{dT}})$,
we can obtain the above result.

\end{proof}

\subsection{Convergence Analysis of ZO-ProxSAGA-GauSGE}
In this section, we give the convergence analysis of the ZO-ProxSAGA-GauSGE.
First, we give an useful lemma about the upper bound of the variance of estimated gradient.

\begin{lemma} \label{lem:10}
In Algorithm \ref{alg:2} using GauSGE, given the estimated gradient
$\hat{v}_t = \frac{1}{b} \sum_{i_t\in \mathcal{I}_t} \big(\hat{\nabla} f_{i_t}(x_{t})-\hat{\nabla} f_{i_t}(z^t_{i_t}) \big) + \hat{\phi}_t$
with $\hat{\phi}_t = \frac{1}{n} \sum_{i=1}^n \hat{\nabla} f_i(z^t_i)$,
then the following inequality holds
\begin{align}
 \mathbb{E}\|\hat{v}_t-\nabla f(x_t)\|^2 \leq &  \frac{6L^2}{nb}\sum_{i=1}^n\mathbb{E}\|x_t-z^t_i\|^2 + (4+\frac{24}{b})(2d+9)\sigma^2 + (2+\frac{12}{b})(d+6)^3 L^2\mu^2.
\end{align}
\end{lemma}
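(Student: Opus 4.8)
The plan is to combine the SAGA bookkeeping from the proof of Lemma~\ref{lem:9} with the GauSGE estimates from the proof of Lemma~\ref{lem:8}. First I would add and subtract $\hat{\nabla} f(x_t)$ inside $\hat{v}_t-\nabla f(x_t)$ and apply $\|a+b\|^2\le 2\|a\|^2+2\|b\|^2$, exactly as in \eqref{eq:87}, to obtain
\begin{align*}
\mathbb{E}\|\hat{v}_t-\nabla f(x_t)\|^2 &\le 2\,\mathbb{E}\Big\| \frac{1}{b}\sum_{i_t\in\mathcal{I}_t}\big(\hat{\nabla} f_{i_t}(x_t)-\hat{\nabla} f_{i_t}(z^t_{i_t})\big)-\big(\hat{\nabla} f(x_t)-\hat{\phi}_t\big)\Big\|^2 \\
&\quad + 2\,\mathbb{E}\|\hat{\nabla} f(x_t)-\nabla f(x_t)\|^2 .
\end{align*}
As in \eqref{eq:60}, the last term is bounded using inequality \eqref{eq:34} of Lemma~\ref{lem:6} and Assumption~2 by $4(2d+9)\sigma^2+2\mu^2L^2(d+6)^3$.

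For the first term I would argue as in \eqref{eq:90}. Since Algorithm~\ref{alg:2} samples $\mathcal{I}_t$ with replacement, the summands are i.i.d.\ with conditional mean $\mathbb{E}_{\mathcal{I}_t}[\hat{\nabla} f_{i_t}(x_t)-\hat{\nabla} f_{i_t}(z^t_{i_t})]=\hat{\nabla} f(x_t)-\hat{\phi}_t$, so Lemmas~4 and 5 of \citep{liu2018zeroth} give
\begin{align*}
&\mathbb{E}\Big\| \frac{1}{b}\sum_{i_t\in\mathcal{I}_t}\big(\hat{\nabla} f_{i_t}(x_t)-\hat{\nabla} f_{i_t}(z^t_{i_t})\big)-\big(\hat{\nabla} f(x_t)-\hat{\phi}_t\big)\Big\|^2 \\
&\qquad \le \frac{1}{bn}\sum_{i=1}^n \mathbb{E}\|\hat{\nabla} f_i(x_t)-\hat{\nabla} f_i(z^t_i)\|^2 ,
\end{align*}
where discarding the subtracted mean only loosens the bound. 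It then remains to bound each $\mathbb{E}\|\hat{\nabla} f_i(x_t)-\hat{\nabla} f_i(z^t_i)\|^2$ exactly as in \eqref{eq:64}: writing $\hat{\nabla} f_i(x_t)-\hat{\nabla} f_i(z^t_i)=(\hat{\nabla} f_i(x_t)-\nabla f_i(x_t))+(\nabla f_i(x_t)-\nabla f_i(z^t_i))+(\nabla f_i(z^t_i)-\hat{\nabla} f_i(z^t_i))$, applying $\|a+b+c\|^2\le 3(\|a\|^2+\|b\|^2+\|c\|^2)$, bounding the two zeroth-order error terms by \eqref{eq:34} and Assumption~2 and the middle term by $L^2\|x_t-z^t_i\|^2$ via Assumption~1, yields $\mathbb{E}\|\hat{\nabla} f_i(x_t)-\hat{\nabla} f_i(z^t_i)\|^2\le 3L^2\|x_t-z^t_i\|^2+12(2d+9)\sigma^2+6\mu^2L^2(d+6)^3$.

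Substituting the two estimates back and collecting terms finishes the proof: the prefactor $\tfrac{2}{bn}\sum_i$ turns $3L^2\|x_t-z^t_i\|^2$ into $\tfrac{6L^2}{nb}\sum_i\mathbb{E}\|x_t-z^t_i\|^2$, the $\sigma^2$ contributions combine to $(4+\tfrac{24}{b})(2d+9)\sigma^2$, and the smoothing contributions to $(2+\tfrac{12}{b})(d+6)^3L^2\mu^2$, which is exactly the claimed bound. I do not expect a genuine obstacle here; the only point that needs care is keeping the two independent sources of randomness separate — the mini-batch $\mathcal{I}_t$ and the Gaussian smoothing directions — so that the conditional-unbiasedness identity and the sampling-variance bound of \citep{liu2018zeroth} are applied to the correct conditional expectation, and observing that with-replacement sampling is precisely what makes the analogue of $\delta_n$ equal $1$, so that no $\delta_n$ factor remains in the constants.
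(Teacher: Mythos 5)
Your proposal is correct and follows essentially the same route as the paper's own proof: the same two-term decomposition via adding and subtracting $\hat{\nabla} f(x_t)$, the same appeal to Lemmas 4 and 5 of \citep{liu2018zeroth} for the mini-batch variance, and the same three-term splitting of $\hat{\nabla} f_i(x_t)-\hat{\nabla} f_i(z^t_i)$ bounded by Lemma \ref{lem:6} and Assumptions 1--2, with identical constants. Your closing remark about with-replacement sampling making the analogue of $\delta_n$ equal to $1$ is a correct reading of why no $\delta_n$ appears here.
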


\begin{proof}
By the definition of the estimated gradient $\hat{v}_t$, we have
\begin{align} \label{eq:111}
\mathbb{E} \|\hat{v}_t-\nabla f(x_t)\|^2 &= \mathbb{E}\|\frac{1}{b} \sum_{i_t\in \mathcal{I}_t} \big(\hat{\nabla} f_{i_t}(x_{t})-\hat{\nabla} f_{i_t}(z^t_{i_t}) \big)
+ \hat{\phi}_t-\nabla f(x_t)\|^2 \nonumber \\
& = \mathbb{E}\|\frac{1}{b} \sum_{i_t\in \mathcal{I}_t} \big(\hat{\nabla} f_{i_t}(x_{t})-\hat{\nabla} f_{i_t}(z^t_{i_t}) \big)
+ \hat{\phi}_t -\hat{\nabla} f(x_t) + \hat{\nabla} f(x_t) -\nabla f(x_t)\|^2 \nonumber \\
& \leq 2 \mathbb{E}\| \frac{1}{b} \sum_{i_t\in \mathcal{I}_t} \big(\hat{\nabla} f_{i_t}(x_{t})-\hat{\nabla} f_{i_t}(z^t_{i_t}) \big)
+ \hat{\phi}_t -\hat{\nabla} f(x_t)\|^2 + 2\mathbb{E}\|\hat{\nabla} f(x_t) -\nabla f(x_t)\|^2 \nonumber \\
& \leq 2 \mathbb{E}\| \frac{1}{b} \sum_{i_t\in \mathcal{I}_t} \big(\hat{\nabla} f_{i_t}(x_{t})-\hat{\nabla} f_{i_t}(z^t_{i_t}) \big)
- (\hat{\nabla} f(x_t) -\hat{\phi}_t) \|^2  + 4(2d+9)\mathbb{E}\|\nabla f(x_t)\|^2 + 2\mu^2L^2(d+6)^3,
\end{align}
where the second inequality holds by Lemma \ref{lem:6}. Using
\begin{align}
\mathbb{E}_{\mathcal{I}_t}\big[\frac{1}{b} \sum_{i_t\in \mathcal{I}_t}\hat{\nabla} f_{i_t}(x_{t})-\hat{\nabla} f_{i_t}(z^t_{i_t}) \big] =
\hat{\nabla} f(x_{t})-\frac{1}{n} \sum_{i=1}^n\hat{\nabla} f_{i}(z^t_{i}) = \hat{\nabla} f(x_{t})- \hat{\phi}_t,
\end{align}
then we have
\begin{align}
\sum_{i=1}^n \big( \hat{\nabla} f_{i}(x_{t})-\hat{\nabla} f_{i}(z^t_{i_t})
- (\hat{\nabla} f(x_t) -\hat{\phi}_t) \big)
= n \big( \hat{\nabla} f(x_{t}) - \hat{\phi}_t \big) - n \big( \hat{\nabla} f(x_{t}) - \hat{\phi}_t\big)= 0.
\end{align}
It follows that
\begin{align} \label{eq:114}
& \mathbb{E}\| \frac{1}{b} \sum_{i_t\in \mathcal{I}_t} \big(\hat{\nabla} f_{i_t}(x_{t})-\hat{\nabla} f_{i_t}(z^t_{i_t}) \big)
- (\hat{\nabla} f(x_t) -\hat{\phi}_t) \|^2 \nonumber \\
& \leq \frac{1}{bn}\sum_{i=1}^n \mathbb{E} \| \hat{\nabla} f_{i}(x_{t})-\hat{\nabla} f_{i}(z^t_{i})
- (\hat{\nabla} f(x_t) -\hat{\phi}_t) \|^2 \nonumber \\
& = \frac{1}{bn}\sum_{i=1}^n \mathbb{E} \| \hat{\nabla} f_{i}(x_{t})-\hat{\nabla} f_{i}(z^t_{i}) \|^2\ - \|\hat{\nabla} f(x_t) -\hat{\phi}_t\|^2 \nonumber \\
& \leq  \frac{1}{bn}\sum_{i=1}^n \mathbb{E} \| \hat{\nabla} f_{i}(x_{t})-\hat{\nabla} f_{i}(z^t_{i}) \|^2,
\end{align}
where the first inequality holds by Lemmas 4 and 5 in \citep{liu2018zeroth}.
By \eqref{eq:29}, we have
\begin{align} \label{eq:115}
\mathbb{E}\| \hat{\nabla} f_i(x_t) - \hat{\nabla} f_i(z^t_i) \|^2 & = \mathbb{E}\| \hat{\nabla} f_i(x_t) - \nabla f_i (x_t) +  \nabla f_i (x_t)
- \nabla f_i(z^t_i) + \nabla f_i(z^t_i) - \hat{\nabla}f_i(z^t_i) \|^2 \nonumber \\
& \leq 3 \mathbb{E}\| \hat{\nabla} f_i(x_t) - \nabla f_i (x_t) \|^2 + 3 \|\nabla f_i (x_t)
- \nabla f_i(z^t_i)\|^2 + 3 \mathbb{E}\| \nabla f_i(z^t_i) - \hat{\nabla}f_i(z^t_i) \|^2\nonumber \\
& \leq 6(2d+9)\big( \|\nabla f_i(x_t)\|^2 + \|\nabla f_i(z^t_i)\|^2 \big) + 3L^2\|x_t-z^t_i\|^2 + 6L^2\mu^2(d+6)^3 ,
\end{align}
where the first inequality follows the Jensen's inequality, and
the second inequality holds by the Lemma \ref{lem:6}.
Finally, combining the inequalities \eqref{eq:111}, \eqref{eq:114} and \eqref{eq:115}, we obtain the above result.

\end{proof}

Next, based on the above lemma, we study the convergence property of the ZO-ProxSAGA-GauSGE.

\begin{theorem}
 Assume the sequence $\{x_t\}_{t=1}^T$ generated from Algorithm \ref{alg:2} using the GauSGE, and given a positive sequence $\{c_t\}_{t=1}^T$ as follows:
 \begin{align}
  c_t = \frac{ 3L^2\eta}{b}+c_{t+1}(1-p)(1+\beta),
 \end{align}
 where $\beta>0$. Let $c_T=0$, $\eta = \frac{\rho}{L} (0< \rho < \frac{1}{2})$ and $b$ satisfies the following inequality:
 \begin{align}
  \frac{96\rho^2n^2}{b^3} + \rho \leq 1,
 \end{align}
 then we have
 \begin{align}
  \mathbb{E} \|g_{\eta}(x_t)\|^2 \leq  \frac{\mathbb{E}[F(x_0)-F(x_*)]}{T\gamma} + \frac{(2+\frac{12}{b})(2d+9)\sigma^2\eta}{\gamma} + \frac{(1+\frac{6}{b})(d+6)^3 L^2\mu^2\eta}{\gamma},
 \end{align}
 where $\gamma=\frac{1}{2\eta}-L\eta^2$ and $x^*$ is an optimal solution of the problem \eqref{eq:1}. Further given $b=[n^{\frac{2}{3}}]$,
 $\rho = \frac{1}{12}$ and $\mu=O(\frac{1}{d\sqrt{T}})$, we have
 \begin{align}
 \mathbb{E} \|g_{\eta}(x_t)\|^2 \leq  & \frac{144L\mathbb{E}[F(x_0)-F(x_*)]}{5T}+ O(\frac{d}{T})+ O(d\sigma^2).
 \end{align}
\end{theorem}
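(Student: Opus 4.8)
The plan is to run the proof of Theorem~\ref{th:3} (the CooSGE case of ZO-ProxSAGA) almost verbatim, replacing the variance bound of Lemma~\ref{lem:9} by the GauSGE bound of Lemma~\ref{lem:10} and carrying along the two extra, non-vanishing error terms that Gaussian smoothing contributes; since Lemma~\ref{lem:10} no longer has an explicit factor $d$ multiplying the $\|x_t-z^t_i\|^2$ terms, the step size rescales from $\eta=\rho/(Ld)$ to $\eta=\rho/L$, which is the only structural change. Concretely, I would first introduce the auxiliary full-gradient iterate $\bar{x}_{t+1}=\mbox{Prox}_{\eta\psi}\big(x_t-\eta\nabla f(x_t)\big)$, apply Lemma~2 of \cite{Reddi2016Prox} once to $F(\bar{x}_{t+1})$ with $z=x_t$ and once to $F(x_{t+1})$ with $z=\bar{x}_{t+1}$, add, take expectations, and isolate the cross term $\langle x_{t+1}-\bar{x}_{t+1},\,\nabla f(x_t)-\hat{v}_t\rangle$. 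Bounding it by Cauchy--Schwarz and Young's inequality with weight $1/(2\eta)$ and then invoking Lemma~\ref{lem:10} for $\mathbb{E}\|\nabla f(x_t)-\hat{v}_t\|^2$ yields
\begin{align*}
\mathbb{E}[F(x_{t+1})]\leq\mathbb{E}\Big[F(x_t)+\tfrac{3L^2\eta}{nb}\sum_{i=1}^n\|x_t-z^t_i\|^2+\big(\tfrac{L}{2}-\tfrac{1}{2\eta}\big)\|x_{t+1}-x_t\|^2+\big(L-\tfrac{1}{2\eta}\big)\|\bar{x}_{t+1}-x_t\|^2\Big]+E,
\end{align*}
with $E=(1+\tfrac{6}{b})(d+6)^3L^2\mu^2\eta+(2+\tfrac{12}{b})(2d+9)\sigma^2\eta$, i.e. the $\mu^2$ and $\sigma^2$ parts of Lemma~\ref{lem:10} scaled by $\eta/2$.

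Next I would set up the Lyapunov function $\Phi_t=\mathbb{E}\big[F(x_t)+\tfrac{c_t}{n}\sum_{i=1}^n\|x_t-z^t_i\|^2\big]$, use step~7 of Algorithm~\ref{alg:2} to write $\tfrac1n\sum_i\|x_{t+1}-z^{t+1}_i\|^2=p\|x_{t+1}-x_t\|^2+\tfrac{1-p}{n}\sum_i\|x_{t+1}-z^t_i\|^2$ with $p=1-(1-1/n)^b\geq b/(2n)$, and expand $\|x_{t+1}-z^t_i\|^2$ via Young's inequality with parameter $\beta$; this produces exactly the stated recursion $c_t=\tfrac{3L^2\eta}{b}+c_{t+1}(1-p)(1+\beta)$. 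Taking $c_T=0$ and $\beta=b/(4n)$ makes $(1-p)(1+\beta)\leq 1-\theta$ with $\theta=p-\beta\geq b/(4n)$, so recursing gives $c_t\leq\tfrac{3L^2\eta}{b\theta}\leq\tfrac{12nL^2\eta}{b^2}=\tfrac{12n\rho L}{b^2}$ after substituting $\eta=\rho/L$; combining this with $1+\tfrac{1-p}{\beta}\leq\tfrac{4n}{b}$ (again using $p\geq b/(2n)$) gives the decisive estimate
\begin{align*}
\tfrac{L}{2}+c_{t+1}\Big(1+\tfrac{1-p}{\beta}\Big)\leq\tfrac{12n\rho L}{b^2}\cdot\tfrac{4n}{b}+\tfrac{L}{2}=\Big(\tfrac{96\rho^2n^2}{b^3}+\rho\Big)\tfrac{L}{2\rho}\leq\tfrac{L}{2\rho}=\tfrac{1}{2\eta},
\end{align*}
where the last step is precisely the hypothesis $\tfrac{96\rho^2n^2}{b^3}+\rho\leq1$. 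Hence the coefficient of $\|x_{t+1}-x_t\|^2$ in $\Phi_{t+1}-\Phi_t$ is nonpositive and may be dropped, leaving $\Phi_{t+1}\leq\Phi_t+(L-\tfrac{1}{2\eta})\|\bar{x}_{t+1}-x_t\|^2+E$.

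Finally I would telescope over $t=0,\dots,T-1$, use $c_T=0$ and $z^0_i=x_0$ so that $\Phi_0=\mathbb{E}[F(x_0)]$ and $\Phi_T\geq\mathbb{E}[F(x_*)]$, divide by $T$, and identify $g_\eta(x_t)=\tfrac1\eta(x_t-\bar{x}_{t+1})$ (so $\|\bar{x}_{t+1}-x_t\|^2=\eta^2\|g_\eta(x_t)\|^2$), obtaining $\tfrac1T\sum_t\mathbb{E}\|g_\eta(x_t)\|^2\leq\tfrac{\mathbb{E}[F(x_0)-F(x_*)]}{T\gamma}+\tfrac{E}{\gamma}$ with $\gamma=\tfrac{\eta}{2}-L\eta^2$. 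Plugging in $b=[n^{2/3}]$ and $\rho=1/12$ (so that $\tfrac{96\rho^2n^2}{b^3}+\rho\to\tfrac34\leq1$ and $\gamma\to\tfrac{5}{144L}$, i.e. $1/(T\gamma)\to 144L/(5T)$) and $\mu=O(1/(d\sqrt T))$ reduces the $(d+6)^3L^2\mu^2$ part of $E/\gamma$ to $O(d/T)$ while the $\sigma^2$ part stays $O(d\sigma^2)$, which is the claimed bound. The main obstacle is not any single inequality but the constant bookkeeping in the $c_t$-recursion: one must verify that $\theta=p-\beta$ is a genuine contraction rate and that the chained bounds $c_t\leq 12n\rho L/b^2$ and $1+\tfrac{1-p}{\beta}\leq 4n/b$ compose to produce exactly the coefficient $96$ in the admissibility condition on $b$ (not a looser constant) --- the extra factor $3$ in Lemma~\ref{lem:10} versus the factor $1$ (modulo $d$) in Lemma~\ref{lem:9} being precisely what lifts the $32$ of Theorem~\ref{th:3} to $96$ here.
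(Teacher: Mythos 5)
Your proposal is correct and follows essentially the same route as the paper's proof: the same auxiliary iterate $\bar{x}_{t+1}$, the same two applications of Lemma 2 of \cite{Reddi2016Prox}, the same Young/Lemma~\ref{lem:10} bound on the cross term, the same Lyapunov function with $\beta=\frac{b}{4n}$ and the identical constant chain $c_t\leq \frac{12n\rho L}{b^2}$, $1+\frac{1-p}{\beta}\leq\frac{4n}{b}$ producing the factor $96$, followed by the same telescoping and parameter substitution giving $\gamma=\frac{5}{144L}$. The only discrepancy is the paper's own typo $\gamma=\frac{1}{2\eta}-L\eta^2$ in the theorem statement, which you correctly replace by $\gamma=\frac{\eta}{2}-L\eta^2$ as in the paper's proof.
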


\begin{proof}
This proof is the similar to the proof of Theorem \ref{th:3}.
We begin with defining an iteration by using the full true gradient: 
 \begin{align}
  \bar{x}_{t+1} = \mbox{Prox}_{\eta\psi}\big(x_t - \eta\nabla f(x_t)\big).
 \end{align}
Using Lemma 2 of \cite{Reddi2016Prox}, we have for any $z\in \mathbb{R}^d$
 \begin{align} \label{eq:128}
  F(\bar{x}_{t+1}) \leq F(z) + (\frac{L}{2}-\frac{1}{2\eta})\|\bar{x}_{t+1}-x_t\|^2 + (\frac{L}{2}+\frac{1}{2\eta})\|z-x_t\|^2 - \frac{1}{2\eta}\|\bar{x}_{t+1}-z\|^2.
 \end{align}
 Since $x_{t+1}=\mbox{Prox}_{\eta\psi}\big(x_t - \eta \hat{v}_t \big)$, similarly, we have
 \begin{align} \label{eq:129}
  F(x_{t+1}) \leq F(z) + \langle x_{t+1}-z, \nabla f(x_t)-\hat{v}_t\rangle + (\frac{L}{2}-\frac{1}{2\eta})\|x_{t+1}-x_t\|^2 + (\frac{L}{2}+\frac{1}{2\eta})\|z-x_t\|^2 - \frac{1}{2\eta}\|x_{t+1}-z\|^2.
 \end{align}
 Setting $z=x_t$ in \eqref{eq:128} and $z=\bar{x}_{t+1}$ in \eqref{eq:129},
 then summing them together and taking the expectations, we obtain
 \begin{align} \label{eq:130}
  \mathbb{E}[F(x_{t+1})] \leq & \mathbb{E}\big[ F(x_{t}) + \underbrace{\langle x_{t+1}-\bar{x}_{t+1}, \nabla f(x_t)-\hat{v}_t\rangle}_{T_4} +  (\frac{L}{2}-\frac{1}{2\eta})\|x_{t+1}-x_t\|^2 \nonumber \\
  & + (L-\frac{1}{2\eta})\|\bar{x}_{t+1}-x_t\|^2 - \frac{1}{2\eta}\|x_{t+1}-\bar{x}_{t+1}\|^2 \big].
 \end{align}
Next, we give an upper bound of the term $T_4$ as follows:
 \begin{align} \label{eq:131}
  T_4 & = \mathbb{E} \langle x_{t+1}-\bar{x}_{t+1}, \nabla f(x_t)-\hat{v}_t\rangle \nonumber \\
  & \leq \frac{1}{2\eta}\mathbb{E}\|x_{t+1}-\bar{x}_{t+1}\|^2 + \frac{\eta}{2}\mathbb{E}\|\nabla f(x_t)-\hat{v}_t\|^2 \nonumber \\
  & \leq \frac{1}{2\eta}\mathbb{E}\|x_{t+1}-\bar{x}_{t+1}\|^2 + \frac{3L^2\eta}{nb}\sum_{i=1}^n\mathbb{E}\|x_t-z^t_i\|^2 + (2+\frac{12}{b})(2d+9)\eta\sigma^2 + (1+\frac{6}{b})(d+6)^3 L^2\mu^2\eta,
 \end{align}
 where the first inequality holds by Cauchy-Schwarz and Young's inequality and the second inequality holds by
 Lemma \ref{lem:10}.
Combining \eqref{eq:130} with \eqref{eq:131}, we have
 \begin{align}
  \mathbb{E}[F(x_{t+1})] \leq & \mathbb{E}\big[ F(x_{t}) +  \frac{3L^2\eta}{nb}\sum_{i=1}^n\mathbb{E}\|x_t-z^t_i\|^2 + (2+\frac{12}{b})(2d+9)\eta\sigma^2 + (1+\frac{6}{b})(d+6)^3 L^2\mu^2\eta \nonumber \\
  & +  (\frac{L}{2}-\frac{1}{2\eta})\|x_{t+1}-x_t\|^2 + (L-\frac{1}{2\eta})\|\bar{x}_{t+1}-x_t\|^2 \big].
 \end{align}

 Next, we define an useful \emph{Lyapunov} function as follows:
 \begin{align}
   \Omega_t = \mathbb{E}\big[ F(x_t) + c_t \frac{1}{n}\sum_{i=1}^n\|x_t-z^t_i\|^2 \big],
 \end{align}
 where $\{c_t\}$ is a nonnegative sequence.
 By the step 7 of Algorithm \ref{alg:2}, we have
 \begin{align} \label{eq:134}
  \frac{1}{n}\sum_{i=1}^n \|x_{t+1}-z^{t+1}_i\|^2 &= \frac{1}{n}\sum_{i=1}^n \big(p\|x_{t+1}-x_{t}\|^2+(1-p)\|x_{t+1}-z^{t}_i\|^2\big) \nonumber \\
   & = \frac{p}{n}\sum_{i=1}^n \|x_{t+1}-x_{t}\|^2 + \frac{1-p}{n}\sum_{i=1}^n\|x_{t+1}-z^{t}_i\|^2 \nonumber \\
   & = p \|x_{t+1}-x_{t}\|^2 + \frac{1-p}{n}\sum_{i=1}^n\|x_{t+1}-z^{t}_i\|^2,
 \end{align}
 where $p$ denotes probability of an index $i$ being in $\mathcal{I}_t$. Here we have
 \begin{align}
  p = 1-(1-\frac{1}{n})^b \geq 1- \frac{1}{1+b/n} = \frac{b/n}{1+b/n} \geq \frac{b}{2n},
 \end{align}
 where the first inequality follows from $(1-a)^b\leq \frac{1}{1+ab}$, and the second inequality holds by
 $b\leq n$.
 Considering the upper bound of $\|x_{t+1}-z^{t}_i\|^2$, we have
 \begin{align} \label{eq:136}
  \|x_{t+1}-z^{t}_i\|^2 & = \|x_{t+1}-x_t+x_t-z^{t}_i\|^2 \nonumber \\
  & =  \|x_{t+1}-x_t\|^2 + 2(x_{t+1}-x_t)^T(x_t-z^{t}_i)+ \|x_t-z^{t}_i\|^2 \nonumber \\
  & \leq \|x_{t+1}-x_t\|^2 + 2\big( \frac{1}{2\beta}\|x_{t+1}-x_t\|^2 + \frac{\beta}{2}\|x_t-z^{t}_i\|^2\big)+ \|x_t-z^{t}_i\|^2 \nonumber \\
  & = (1+\frac{1}{\beta})\|x_{t+1}-x_t\|^2 + (1+\beta)\|x_t-z^{t}_i\|^2,
 \end{align}
 where $\beta>0$.
 Combining \eqref{eq:134} with \eqref{eq:136}, we have
 \begin{align}
 \frac{1}{n}\sum_{i=1}^n \|x_{t+1}-z^{t+1}_i\|^2 \leq (1+\frac{1-p}{\beta})\|x_{t+1}-x_t\|^2 +\frac{(1-p)(1+\beta)}{n}\sum_{i=1}^n\|x_t-z^{t}_i\|^2.
 \end{align}
 It follows that
 \begin{align}
  \Omega_{t+1} & = \mathbb{E}\big[ F(x_{t+1}) + c_{t+1}\frac{1}{n}\sum_{i=1}^n\|x_{t+1}-z^{t+1}_i\|^2\big] \nonumber \\
  & \leq \mathbb{E}\big[ F(x_{t+1}) + c_{t+1}(1+\frac{1-p}{\beta})\|x_{t+1}-x_t\|^2
  + c_{t+1}\frac{(1-p)(1+\beta)}{n}\sum_{i=1}^n\|x_t-z^{t}_i\|^2\big] \nonumber \\
  & \leq \mathbb{E} \big[F(x_t) + \big(\frac{3L^2\eta}{b}+c_{t+1}(1-p)(1+\beta) \big)\frac{1}{n}\sum_{i=1}^n \mathbb{E} \|x_t-z^t_i\|^2_2 + \big( \frac{L}{2}-\frac{1}{2\eta} + c_{t+1}(1+\frac{1-p}{\beta}) \big)\|x_{t+1}-x_t\|^2 \nonumber \\
 & \quad + (L-\frac{1}{2\eta})\|\bar{x}_{t+1}-x_t\|^2 + (2+\frac{12}{b})(2d+9)\eta\sigma^2 + (1+\frac{6}{b})(d+6)^3 L^2\mu^2\eta \big] \nonumber \\
 & \leq \Omega_t + \big( \frac{L}{2}-\frac{1}{2\eta} + c_{t+1}(1+\frac{1-p}{\beta}) \big)\|x_{t+1}-x_t\|^2  + (L-\frac{1}{2\eta})\|\bar{x}_{t+1}-x_t\|^2 + (2+\frac{12}{b})(2d+9)\eta\sigma^2 + (1+\frac{6}{b})(d+6)^3 L^2\mu^2\eta,
 \end{align}
where $c_t = \frac{ 3L^2\eta}{b}+c_{t+1}(1-p)(1+\beta)$.

Let $c_T = 0$ and $\beta=\frac{b}{4n}$. Since $(1-p)(1+\beta)=1+\beta-p-p\beta\leq 1+\beta-p$ and $p\geq \frac{b}{2n}$,
it follows that
\begin{align}
 c_t \leq c_{t+1}(1-\theta) + \frac{ 3L^2\eta}{b},
\end{align}
where $\theta = p-\beta\geq \frac{b}{4n}$.
Then recursing on $t$, for $0\leq t \leq T-1$, we have
\begin{align}
 c_t \leq \frac{ 3L^2 \eta}{b} \frac{1-\theta^{T-t}}{\theta} \leq \frac{ 3L^2\eta}{b\theta} \leq \frac{ 12nL^2\eta}{b^2}.
\end{align}
Let $\eta = \frac{\rho}{L} \ (0< \rho < \frac{1}{2})$, we have $c_t \leq \frac{12n\rho L}{b^2}$.
It follows that
\begin{align}
 c_{t+1}(1+\frac{1-p}{\beta}) + \frac{L}{2} & \leq \frac{12n\rho L}{b^2}( 1 + \frac{4n-2b}{b}) + \frac{L}{2} \nonumber \\
 & = \frac{12n\rho L}{b^2}( \frac{4n}{b} - 1) + \frac{L}{2} \nonumber \\
 & \leq \frac{48\rho Ln^2}{b^3} + \frac{L}{2} = (\frac{96\rho^2n^2}{b^3} + \rho)\frac{L}{2\rho} \leq \frac{L}{2\rho} = \frac{1}{2\eta},
\end{align}
where the third inequality holds by $\frac{96\rho^2n^2}{b^3} + \rho \leq 1$.
Thus, we obtain
\begin{align} \label{eq:142}
 \Omega_{t+1} \leq \Omega_t + (L-\frac{1}{2\eta})\|\bar{x}_{t+1}-x_t\|^2  + (2+\frac{12}{b})(2d+9)\eta\sigma^2 + (1+\frac{6}{b})(d+6)^3 L^2\mu^2\eta.
\end{align}
Summing the inequality \eqref{eq:142} across all the iterations, we have
\begin{align}
\frac{1}{T}\sum_{t=1}^T(\frac{1}{2\eta}-L)\mathbb{E}\|x_t-\bar{x}_{t+1}\|^2 \leq \frac{\Omega_0-\Omega_T}{T}  + (2+\frac{12}{b})(2d+9)\eta\sigma^2 + (1+\frac{6}{b})(d+6)^3 L^2\mu^2\eta.
\end{align}

Since $c_T=0$ and $z^i_0=x_0$ for all $i=1,2,\cdots,n$, we have
\begin{align}
 \frac{1}{T}\sum_{t=1}^T (\frac{1}{2\eta}-L)\mathbb{E}\|g_{\eta}(x_t)\|^2 \leq \frac{\mathbb{E}[F(x_0)-F(x_T)]}{T\gamma}  + (2+\frac{12}{b})(2d+9)\eta\sigma^2 + (1+\frac{6}{b})(d+6)^3 L^2\mu^2\eta,
\end{align}
where $\gamma=\frac{\eta}{2}-L\eta^2$ and
\begin{align}
 g_{\eta}(x_t) = \frac{1}{\eta}\big[x_t-\mbox{Prox}_{\eta \psi}(x_t-\eta\nabla f(x_t))\big] = \frac{1}{\eta}(x_t-\bar{x}_{t+1}).
\end{align}
Given $b=[n^{\frac{2}{3}}]$ and $\rho=\frac{1}{12}$, it is easy verified that $\frac{96\rho^2n^2}{b^3} + \rho = \frac{3}{4}\leq 1$, and
$\gamma=\frac{\eta}{2}-L\eta^2  = \frac{5}{144L}$.
Finally, let $\mu=O(\frac{1}{d\sqrt{T}})$,
we can obtain the above result.

\end{proof}

\end{appendices}

\end{onecolumn}

\end{document}